\numberwithin{equation}{section}
\theoremstyle{definition}
\newtheorem{thm}{Theorem}[section]
\newtheorem{prop}[thm]{Proposition}
\newtheorem{lem}[thm]{Lemma}
\newtheorem{defn}[thm]{Definition}
\newtheorem*{pfmain}{Proof of Theorem \ref{maintheorem}}
\newtheorem*{pfstrong}{Proof of Theorem \ref{strongsolution}}
\newtheorem*{pfglobal}{Proof of Theorem \ref{globalsol}}
\newtheorem{rmk}[thm]{Remark}
\newtheorem*{ack}{Acknowledgements}
\theoremstyle{plain}
\newcommand{\al}{\alpha}
\newcommand{\gam}{\gamma}
\newcommand{\del}{\delta}
\newcommand{\ep}{\varepsilon}
\newcommand{\lam}{\lambda}
\newcommand{\sig}{\sigma}
\newcommand{\Del}{\Delta}
\newcommand{\Gam}{\Gamma}
\newcommand{\Om}{\Omega}
\newcommand{\C}{\mathbb{C}}
\newcommand{\N}{\mathbb{N}}
\newcommand{\R}{\mathbb{R}}
\newcommand{\Z}{\mathbb{Z}}
\newcommand{\cF}{\mathcal{F}}
\newcommand{\cM}{\mathcal{M}}
\newcommand{\cN}{\mathcal{N}}
\newcommand{\cP}{\mathcal{P}}
\newcommand{\cS}{\mathcal{S}}
\renewcommand{\leq}{\leqslant}
\renewcommand{\geq}{\geqslant}
\newcommand{\pl}{\partial}
\newcommand{\BMnorm}[5]{\norm{#1\mid N^{#2}_{#3,#4,#5}}}
\newcommand{\Mnorm}[3]{\norm{#1\mid M^{#2}_{#3}}}
\newcommand{\inftynorm}[1]{\norm{#1\mid L^{\infty}}}
\newcommand{\xtnorm}[1]{\norm{#1\mid X_{T}}}
\newcommand{\onorm}[2]{\norm{#1\mid #2}}
\begin{document}
\title{Existence of solutions for time fractional semilinear parabolic equations \\in Besov--Morrey spaces}

\author{Yusuke Oka and Erbol Zhanpeisov}
\date{\today}

\maketitle


\begin{abstract}
  We consider the Cauchy problem for a time fractional semilinear heat equation
  \begin{align*}
    \begin{cases}
      ^{C}\pl^{\al}_{t}u=\Del u+|u|^{\gam -1}u, & \quad x\in\R^N,\: t>0,\\
      u(x,0)=\mu(x), & \quad x\in\R^N,
    \end{cases}\tag{P}
  \end{align*}
  where $0<\al<1,\: \gam>1,\: N\in\Z_{\geq 1}$ and $\mu(x)$ belongs to inhomogeneous/homogeneous Besov--Morrey spaces. The fractional derivative $^{C}\pl^{\al}_{t}$ is interpreted by Caputo sense.
  We present sufficient conditions for the existence of local/global-in-time solutions to problem (P).
  Our results cover all existing results in the literature and can be applied to a large class of initial data.
\end{abstract}
\vspace{25pt}
\noindent Addresses:

\smallskip
\noindent
Y.~O.:  Graduate School of Mathematical Sciences, The University of Tokyo,\\
\qquad\,\,\, 3-8-1 Komaba, Meguro-ku, Tokyo 153-8914, Japan. \\
\noindent
E-mail: {\tt oka@ms.u-tokyo.ac.jp}\\

\smallskip
\noindent E.~Z.:  Okinawa Institute of Science and Technology,\\
\qquad\,\,\, 1919-1 Tancha, Onna-son, Kunigami-gun, Okinawa 904-0495, Japan.\\
\noindent
E-mail: {\tt erbol.zhanpeisov@oist.jp}\\
\vspace{20pt}

\noindent
{\it 2020 AMS Subject Classification:}
26A33, 35K15, 35K58
\vspace{3pt}

\noindent
{\it Keywords:} Caputo derivative, semilinear heat equation, Besov--Morrey spaces, Cauchy problem
\vspace{3pt}

\newpage
\section{Introduction and main results}
We are interested in the existence of a solution to the time fractional Cauchy problem
\begin{align}\label{eq}
  \begin{cases}
    ^{C}\pl^{\al}_{t}u=\Del u+|u|^{\gam -1}u, & \quad x\in\R^N,\: t\in\left] 0,T \right[,\\
    u(x,0)=\mu(x), & \quad x\in\R^{N},
  \end{cases}
\end{align}
where $T>0$, $0<\al<1,\: \gam>1,\: N\geq 1$ and $\mu(x)$ is a tempered distribution on $\R^{N}$.
The symbol $^{C}\pl^{\al}_{t}$ denotes the Caputo derivative of order $\al$. For an absolute continuous function $y(t)$, its Caputo derivative is defined by
\begin{align*}
  ^{C}\pl^{\al}_{t}y(t)&\coloneqq\frac{1}{\Gam(1-\al)}\int_{0}^{t}(t-\tau)^{-\al}\frac{\pl}{\pl\tau}y(\tau)\,d\tau,
\end{align*}
where $\Gam$ is the usual Gamma function.

Recently, fractional calculus has been used in various fields to describe real-world phenomena which can not be explained by usual Brownian motion.
To give an example, time fractional diffusion equations were proposed as a macroscopic model for anomalous diffusion in heterogeneous media as pointed out by \cite{AdamsGelhar}.
For mathematical treatments, see \cite{CNYY,GigaMitakeSato,SakamotoYamamoto}.

In this paper, we consider the time local and global well-posedness of problem \eqref{eq} with initial data $\mu(x)$ belonging to Besov--Morrey spaces.
With the aid of these spaces, we construct local/global-in-time solutions to problem \eqref{eq} for wider class of initial data than has been treated in the literature.

There have been a lot of mathematical works on the solvability of problem \eqref{eq} with $\al=1$, see
e.g.,\cite{BarasPierre85,Fujita66,HisaIshige2018,Hayakawa,Miyamoto2021,Sugitani75,TW2014,Umakoshi,Weissler80,Weissler81,Zhanpeisov}.
Without going into the details of these results, it suffices to remark that
\begin{itemize}
  \item Let $\gam>1+2/N$. Then problem \eqref{eq} is well-posed in $L^{p}(\R^{N})$ for all $p\geq q_{c}\coloneqq{N(\gam-1)}/{2}\:\:(>1)$,
  \item Let $\gam=1+2/N$. Then problem \eqref{eq} is well-posed in $L^{p}(\R^{N})$ for all $p>1$,
  \item Let $1<\gam<1+2/N$. Then problem \eqref{eq} is well-posed in a set of distributions including Radon measures.
\end{itemize}
Note that in the case of $\gam=1+2/N$, problem \eqref{eq} with $\al=1$ is not well-posed in $L^1$ (see for example, \cite{BrezisCazenave,CelikZhou,HisaIshige2018}).
Among others, Kozono and Yamazaki \cite{KozonoYamazaki} introduced Besov--Morrey spaces and obtained the time local well-posedness of problem \eqref{eq} for a large class of initial data.
Kozono and Yamazaki \cite{KozonoYamazaki} also proved the time global well-posedness of problem \eqref{eq} in the case of $\gam>1+2/N$. For $\gam\leq 1+2/N$, there are no nontrivial nonnegative global solutions of problem \eqref{eq} (see for example, \cite{HisaIshige2018,Hayakawa,Sugitani75}).
The aim of the present paper is to extend their results to the case of time fractional parabolic equations.

For time fractional semilinear parabolic equations, the time local/global well-posedness was discussed in \cite{AlmeidaPrecioso,C-N,Nabti,Suzuki2022,WCX}
(Also, there is a good reference book \cite{GWBook}).
Let $\Omega$ be a smooth bounded domain in $\R^{N}$.
Recently, Ghergu, Miyamoto and Suzuki \cite{GMS} consider the following Cauchy--Dirichlet problem
\begin{align}
  \begin{cases}
    ^{C}\pl^{\al}_{t}u=\Del u+|u|^{\gam-1}u, & x\in\Om,\,t>0,
    \\
    u(x,t)=0, & x\in\pl\Om,\,t>0,
    \\
    u(x,0)=\mu(x)\in L^{p}(\Om), & x\in\Om
  \end{cases}
\end{align}
for some $p\in\left[1,\infty\right]$.
They consider the integral equation
\[
u(t)=P_{\al}(t)\mu+
\int_{0}^{t}(t-\tau)^{\al -1}S_{\al}(t-\tau)|u|^{\gam -1}u(\tau)\,d\tau
\]
and proved the following:
\begin{itemize}
  \item Let $\gam>1+{2}/{N}$. Then problem \eqref{eq} is well-posed in $L^{p}(\Omega)$ for all $p\geq q_{c}\coloneqq{N(\gam-1)}/{2}\:\:(>1)$,
  \item Let $1<\gam\leq 1+{2}/{N}$. Then problem \eqref{eq} is well-posed in $L^{p}(\Omega)$ for all $p\geq 1$.
\end{itemize}
For the precise definition of the symbol, see Definition \ref{def:timefracheatkernel} (Since their work was done in a bounded domain $\Om$, you must read the Gauss kernel $\cF^{-1
}\exp(-|\xi|^{2}t)\cF$ appearing in the definition as the Dirichlet heat kernel on $\Om$).
Note that the problem \eqref{eq} has a local-in-time solution in the case of $(\gam,p)=(1+2/N,1)$ which is a striking difference compared to the classical case $\al=1$ (see also \cite{Kojima}).
Moreover, they obtained that if $\gam\geq 1+2/N$, there exists a global-in-time mild solution with sufficiently small $\mu$ in the sense of $L^{q_{c}}(\Omega)$.

Their results are based on $L^{p}$-$L^{q}$ estimates of operators $P_{\al}(t)$ and $S_{\al}(t)$, but this is not adequate to deal with (general) distributions as initial data.
In this paper, we establish new decay estimates of $P_{\al}(t)$ and $S_{\al}(t)$ in Besov--Morrey spaces to give sufficient conditions for the existence of local/global-in-time solutions to problem \eqref{eq} with wider class of initial data, which includes not only all previous results (see Remark \ref{rmk:optimality} and Remark \ref{highfreq}) but also distributions other than Radon measures.
Furthermore, we prove the global-in-time existence of solutions with weaker restrictions on $\gam$ than $\gam\geq 1+2/N$.

Since the situation is more complicated than the classical case $\al=1$, more careful calculations are often required (see Theorem \ref{strongsolution} for example).
\subsection{Function spaces and definition of solutions}\label{sec:FS}
In order to state our results, we introduce some function spaces.
For any normed space $X$, we denote by $\norm{f\mid X}$ the norm of $f$ in $X$.
\begin{defn}
  {\bf (Morrey spaces)}
  \begin{enumerate}
    \item Let $1\leq q\leq p<\infty$. The Morrey space $\cM^{p}_{q}(\R^N)$ is defined as the set of Lebesgue measurable functions $u:\R^{N}\to\R$ such that
    \begin{align*}
      \norm{u\mid\cM^{p}_{q}}\coloneqq
      \sup_{x_{0}\in\R^N}\sup_{R>0}R^{\frac{N}{p}-\frac{N}{q}}
      \onorm{u}{L^q(B(x_{0},R))}
      <\infty.
    \end{align*}
    \item Let $1\leq q\leq p<\infty$. The local Morrey space $M^{p}_{q}(\R^N)$ is defined as the set of Lebesgue measurable functions $u:\R^{N}\to\R$ such that
    \begin{align*}
      \norm{u\mid M^{p}_{q}}\coloneqq
      \sup_{x_{0}\in\R^N}\sup_{0<R\leq 1}R^{\frac{N}{p}-\frac{N}{q}}
      \onorm{u}{L^q(B(x_{0},R))}
      <\infty.
    \end{align*}
  \end{enumerate}
\end{defn}
Here and subsequently, $B(x,r)$ denotes the open ball in $\R^{N}$ with center $x\in\R^N$ and radius $r>0$.
Next, we define the space of measures of the Morrey type.
\begin{defn}{\bf (measure spaces of Morrey type)}
  \begin{enumerate}
    \item Let $1\leq p<\infty$. The measure space of the Morrey type $\cM^p(\R^N)$ is defined as the set of Radon measures $\mu$ on $\R^N$ such that
    \begin{align*}
      \onorm{\mu}{\cM^{p}}\coloneqq
      \sup_{x_{0}\in\R^N}\sup_{R>0}R^{\frac{N}{p}-N}
      \abs{\mu}(B(x_{0},R))
      <\infty.
    \end{align*}
    \item Let $1\leq p<\infty$. The local measure space of the Morrey type $M^p(\R^N)$ is defined as the set of Radon measures $\mu$ on $\R^N$ such that
    \begin{align*}
      \onorm{\mu}{M^{p}}\coloneqq
      \sup_{x_{0}\in\R^N}\sup_{0<R\leq 1}R^{\frac{N}{p}-N}
      \abs{\mu}(B(x_{0},R))
      <\infty.
    \end{align*}
    Here, $\abs{\mu}$ denotes the total variation of $\mu$.
  \end{enumerate}
\end{defn}
\begin{rmk}\label{rmkformorreynorm-1}
  The spaces $\cM^{p}_{q}(\R^N)$, $M^{p}_{q}(\R^N)$, $\cM^p(\R^N)$ and $M^p(\R^N)$ are Banach spaces with norms $\norm{\cdot\mid\cM^{p}_{q}}$, $\Mnorm{\cdot}{p}{q}$, $\onorm{\cdot}{\cM^{p}}$ and $\onorm{\cdot}{M^p}$, respectively.
\end{rmk}
We introduce the Littewood--Paley dyadic decomposition in order to define the Besov--Morrey spaces.
Let $\zeta(t)\in C_{c}^{\infty}\qty(\left[0,\infty\right[;\R)\:$ satisfy $\:0\leq\zeta(t)\leq 1,\:\zeta(t)\equiv 1$ for $t\leq\frac{3}{2}$ and $\text{supp}\:\zeta\subset\left[0,\frac{5}{3}\right[$.
We set
\begin{align*}
  \phi_{(0)}(\xi)&\coloneqq\zeta(\abs{\xi}),\\
  \phi_{0}(\xi)&\coloneqq\zeta(\abs{\xi})-\zeta(2\abs{\xi}),\\
  \phi_{j}(\xi)&\coloneqq\phi_{0}(2^{-j}\xi)\quad\text{for}\quad j\in\Z.
\end{align*}

Then, we have $\phi_{j}(\xi)\in C_{c}^{\infty}(\R^N)$ for all $j\in\Z$ and
\begin{align*}
  \sum_{j=-\infty}^{\infty}\phi_{j}(\xi)
  =
  \phi_{(0)}(\xi)+\sum_{j=1}^{\infty}\phi_{j}(\xi)=\lim_{n\rightarrow\infty}\zeta(2^{-n}\abs{\xi})=1\quad\text{for all }\xi\in\R^{N}\setminus\{0\}.
\end{align*}


\begin{defn}
  {\bf (inhomogeneous Besov--Morrey spaces)}
  Let $1\leq q\leq p<\infty,\: 1\leq r\leq\infty$ and $s\in\R$.
  The inhomogeneous Besov--Morrey space $N^{s}_{p,q,r}(\R^N)$ is defined as the set of tempered distributions $u\in\cS'(\R^N)$ such that
  \begin{align*}
    \cF^{-1}\phi_{(0)}&(\xi)\cF u,\:\cF^{-1}\phi_{j}(\xi)\cF u\in M^{p}_{q}(\R^N)\:\:\:(\text{for}\:\: j\in\Z_{\geq 1})\quad\text{and}
    \\
    \norm{u\mid N^{s}_{p,q,r}}\coloneqq
    &\norm{\cF^{-1}\phi_{(0)}(\xi)\cF u\mid M^{p}_{q}}
    +
    \onorm{\{2^{sj}\Mnorm{\cF^{-1}\phi_{j}(\xi)\cF u}{p}{q}\}_{j=1}^{\infty}}{l^r}
    <\infty.
  \end{align*}
  Here, $\cF$ denotes the Fourier transform on $\R^N$.
\end{defn}
\begin{defn}
  {\bf (homogeneous Besov--Morrey spaces)}
  Let $1\leq q\leq p<\infty,\: 1\leq r\leq\infty$ and $s\in\R$.
  The homogeneous Besov--Morrey space $\cN^{s}_{p,q,r}(\R^N)$ is defined as the set of equivalence classes of tempered distributions $u\in\cS'/\cP(\R^N)$ such that
  \begin{align*}
    \cF^{-1}\phi_{j}&(\xi)\cF u\in \cM^{p}_{q}(\R^N)\:\:\:(\text{for}\:\: j\in\Z)\quad\text{and}
    \\
    \norm{u\mid \cN^{s}_{p,q,r}}\coloneqq
    &\norm{\{2^{js}\norm{\cF^{-1}\phi_{j}(\xi)\cF u\mid\cM^{p}_{q}}\}_{j\in\Z}\mid l^r}
    <\infty.
  \end{align*}
  Here, $\cP(\R^N)$ denotes the set of polynomials with $N$ variables.
\end{defn}
Note that we can regard an element of homogeneous Besov--Morrey spaces as a tempered distribution if $s<\frac{N}{p}$ or $(s,r)=(N/p,1)$ (see Proposition 2.10 of \cite{KozonoYamazaki}).
Also, we have the inclusion relation $\cN^{s}_{p,q,r}\subset N^{s}_{p,q,r}$ if $s<0$ (see Proposition 2.14 of \cite{KozonoYamazaki}).
Next, by using the heat kernel and the Wright type function, we give the definition of the solution to problem \eqref{eq}.
For every $t>0$ and for every $u\in\cS'(\R^N)$, we put
\begin{align*}
  e^{t\Del}u\coloneqq\cF^{-1}e^{-t\abs{\xi}^2}\cF u.
\end{align*}
\begin{defn}\label{def:timefracheatkernel}
  Let $1\leq q\leq p<\infty,\: 1\leq r\leq\infty$ and $s\in\R$.
  For $\mu\in N^{s}_{p,q,r}$, we set
  \begin{align*}
    P_{\al}(t)\mu(x)&\coloneqq\int_{0}^{\infty}\Phi_{\al}(\theta)e^{t^{\al}\theta\Del}\mu(x)\,d\theta\:\quad(t>0)
    \\
    S_{\al}(t)\mu(x)&\coloneqq\al\int_{0}^{\infty}\theta\Phi_{\al}(\theta)e^{t^{\al}\theta\Del}\mu(x)\,d\theta\quad(t>0)
  \end{align*}
  Here, $\Phi_{\al}(x)$ denotes the Wright type functions given by
  \begin{align*}
    \Phi_{\al}(z)\coloneqq\sum_{k=0}^{\infty}\frac{(-z)^{k}}{k!\Gam(-\al k+1-\al)},
    \quad 0<\al<1, z\in\C.
  \end{align*}
\end{defn}

\begin{defn}\label{def:inteq}
  Let $1\leq q\leq p<\infty,\: 1\leq r\leq\infty,\:s\in\R$ and $T>0$.
  For $\mu\in N^{s}_{p,q,r}$, we say that a function $u(x,t)$ is a mild solution to problem \eqref{eq} on $\R^{N}\times\left]0,T\right[$ if it satisfies the following integral equation
    \begin{align}\label{eq:int}
      u(t)=P_{\al}(t)\mu+
      \int_{0}^{t}(t-\tau)^{\al -1}S_{\al}(t-\tau)|u|^{\gam -1}u(\tau)\,d\tau
    \end{align}
  for $a.e.\:(x,t)\in\R^{N}\times\left]0,T\right[$.
\end{defn}
This definition of a mild solution is based on \cite{GWBook} (see also \cite{Bazhlekova,ZhangSun}).
\subsection{Main results}
Now, we are ready to state our main theorems.
\begin{thm}\label{maintheorem}
  Let $\gam>1,\:\gam\leq q\leq p<\infty,\:\max\{-2/\al\gam, -2\}<s<0,$ and
   $s\geq\frac{N}{p}-\frac{2}{\gam -1}$.
   Then, there exist $\del >0$ and $M>0$ such that for every $\mu\in N^{s}_{p,q,\infty}$ satisfying
   \begin{align}\label{cond:highfreq}
     \limsup_{j\rightarrow\infty}2^{sj}\Mnorm{\cF^{-1}\phi_{j}\cF\mu}{p}{q}<\del,
   \end{align}
   problem \eqref{eq} possesses the unique solution $u$ in $\R^{N}\times\left]0,T\right[$ for some $T>0$ with $\sup_{0<t\leq T}t^{-s\al/2}\Mnorm{u(\cdot,t)}{p}{q}\leq M$.
\end{thm}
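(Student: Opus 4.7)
The plan is to apply the Banach fixed point theorem to the mild-solution operator
\[
  \Psi(u)(t) \coloneqq P_\al(t)\mu + \int_0^t (t-\tau)^{\al-1} S_\al(t-\tau)\,|u|^{\gam-1}u(\tau)\,d\tau
\]
on a closed ball $B_M \coloneqq \{ u \in X_T : \xtnorm{u} \leq M\}$ inside the Banach space
\[
  X_T \coloneqq \Bigl\{\,u\,:\,\sup_{0<t\leq T} t^{-s\al/2}\Mnorm{u(\cdot,t)}{p}{q} < \infty\,\Bigr\}
\]
endowed with $\xtnorm{u}\coloneqq\sup_{0<t\le T}t^{-s\al/2}\Mnorm{u(\cdot,t)}{p}{q}$. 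The proof splits into a linear bound on the free term $P_\al(t)\mu$, a self-map and contraction bound on the Duhamel term, and the joint calibration of $\delta$, $M$ and $T$.

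For the linear term I would first establish the decay estimate $\Mnorm{P_\al(t)f}{p}{q}\lesssim t^{s\al/2}\onorm{f}{N^{s}_{p,q,\infty}}$ valid for $s<0$, by combining Morrey heat-kernel bounds of the form $\Mnorm{e^{\tau\Delta}\cF^{-1}\phi_j\cF f}{p}{q}\lesssim e^{-c\tau 4^j}\Mnorm{\cF^{-1}\phi_j\cF f}{p}{q}$ with the representation $P_\al(t)=\int_0^\infty\Phi_\al(\theta)e^{t^\al\theta\Delta}\,d\theta$ and the absolute convergence of $\int_0^\infty \theta^{a}\Phi_\al(\theta)\,d\theta$ for $a>-1$. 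Since only the \emph{high-frequency} part of $\mu$ is assumed small, I would split $\mu=\mu_{\leq j_0}+\mu_{>j_0}$ via Littlewood--Paley projectors: condition \eqref{cond:highfreq} forces $\onorm{\mu_{>j_0}}{N^{s}_{p,q,\infty}}<2\delta$ for $j_0$ large, while $\mu_{\leq j_0}\in M^{p}_{q}$ together with $-s\al/2>0$ gives $t^{-s\al/2}\Mnorm{P_\al(t)\mu_{\leq j_0}}{p}{q}\to 0$ as $t\to 0^+$. Choosing first $\delta$ small and then $T=T(\mu)$ small therefore yields $\sup_{0<t\le T} t^{-s\al/2}\Mnorm{P_\al(t)\mu}{p}{q}\leq M/2$.

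For the Duhamel term, Hölder's inequality in Morrey spaces (needing $q\geq\gam$) gives $\norm{|u|^{\gam-1}u\mid M^{p/\gam}_{q/\gam}}\lesssim \Mnorm{u}{p}{q}^\gam$, while the same Wright-averaging as above produces $\Mnorm{S_\al(t)f}{p}{q}\lesssim t^{-\al N(\gam-1)/(2p)}\,\norm{f\mid M^{p/\gam}_{q/\gam}}$. Plugging $\Mnorm{u(\tau)}{p}{q}^\gam\leq M^\gam\tau^{s\al\gam/2}$ into the convolution produces a Beta-type integral
\[
  \int_0^t (t-\tau)^{\al-1-\al N(\gam-1)/(2p)}\tau^{s\al\gam/2}\,d\tau,
\]
which converges thanks to $s>-2/(\al\gam)$ (integrability at $\tau=0$) and to $s\geq N/p-2/(\gam-1)$ together with $s<0$, which jointly force $N/p<2/(\gam-1)$ (integrability at $\tau=t$). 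Its value is $C\,t^{s\al/2+\al(\gam-1)(s-N/p+2/(\gam-1))/2}$, whose exponent is $\geq s\al/2$ with a surplus non-negative power of $t$ in the subcritical range. The pointwise bound $\bigl||u|^{\gam-1}u-|v|^{\gam-1}v\bigr|\lesssim(|u|^{\gam-1}+|v|^{\gam-1})|u-v|$ then yields $\xtnorm{\Psi(u)-\Psi(v)}\leq CM^{\gam-1}T^{\kappa}\,\xtnorm{u-v}$ for some $\kappa\geq 0$, and choosing $\delta$, $M$, $T$ small in turn makes $\Psi$ a contraction on $B_M$.

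The main obstacle is the \emph{critical} case $s=N/p-2/(\gam-1)$, where $\kappa=0$ and the Duhamel estimate produces no surplus power of $t$. Here the contraction factor $CM^{\gam-1}$ must be made small by shrinking $M$ alone, which forces the self-map bound $\sup_{0<t\le T}t^{-s\al/2}\Mnorm{P_\al(t)\mu}{p}{q}\leq M/2$ to hold with a \emph{small} $M$. This requires the threshold $j_0$ and the interval length $T$ to be tuned simultaneously: one uses the $\limsup$ hypothesis to absorb the high-frequency piece into $O(\delta)\lesssim M$, then shrinks $T$ so that the low-frequency contribution is at most $M/4$. Once $M$, $\delta$, and $T$ are so chosen, the Banach fixed point theorem furnishes the unique $u\in B_M$, and uniqueness within the stated class follows from the contraction itself.
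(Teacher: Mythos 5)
Your proposal is correct and follows essentially the same route as the paper: the same weighted space $X_T$ with norm $\sup_{0<t\leq T}t^{-s\al/2}\Mnorm{u(\cdot,t)}{p}{q}$, the same high/low Littlewood--Paley splitting of $\mu$ to exploit the $\limsup$ hypothesis (the paper's Lemma \ref{u0bound}), the same Wright-function averaging of the Gauss-kernel smoothing estimates to bound $P_\al$ and $S_\al$, and the same Beta-integral computation with exponent conditions coming from $s>-2/(\al\gam)$ and $s\geq N/p-2/(\gam-1)$; your Banach fixed-point formulation on a ball is just the paper's Picard iteration in disguise, and you correctly identify the critical-case calibration of $\del$, $M$, $T$.
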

\begin{rmk}\label{rmk:optimality}
  If $\gam\leq 1+2/N$, by using Remark \ref{rmkformorreynorm}, Proposition \ref{inclusion} and Proposition \ref{Sobolevtypeineq}, we see that
  \begin{align}\label{eq:doublecrit}
    L^{1}(\R^{N})
    \subset
    M^{1}_{1}(\R^N)
    \subset
    N^{0}_{1,1,\infty}(\R^N)
    \subset
    N^{-N+N/\gam}_{\gam,\gam,\infty}(\R^N).
  \end{align}
  The space appearing in the last of \eqref{eq:doublecrit} satisfies the assumptions of Theorem \ref{maintheorem}.
  If $\gam>1+2/N$, we can take $p$ such that
  $\max\{ \gam, q_{c} \}<p<q_{c}\gam$ and in the same way, we have
  \begin{align*}
    L^{q_{c}}(\R^{N})
    \subset
    M^{q_{c}}_{q_{c}}(\R^N)
    \subset
    N^{0}_{q_{c},q_{c},\infty}(\R^N)
    \subset
    N^{N/p-2/(\gam-1)}_{p,p,\infty}(\R^N).
  \end{align*}
\end{rmk}
\begin{rmk}\label{highfreq}
  By using Theorem \ref{maintheorem} and Remark \ref{rmk:optimality}, we can say that there exists a solution to the problem \eqref{eq} even in the so-called doubly critical case ($\gam=1+2/N$ and initial data $\mu\in L^{1}$). Recall that problem $\eqref{eq}$ has the following scale invariant property:
  If a function $u(x,t)$ is a solution to problem $\eqref{eq}$ with initial data $\mu(x)$, then a function $u_{\lam}(x,t)\coloneqq\lam^{2\al/(\gam-1)}u(\lam^{\al}x,\lam^{2}t)$ is also a solution to problem $\eqref{eq}$ with initial data $\mu_{\lam}(x)\coloneqq\lam^{2\al/(\gam-1)}\mu(\lam^{\al}x)$.
  For any $L^1$ function $\mu(x)$, we see that $\mu_{\lam}$ satisfies the condition
  \[
  \limsup_{j\to\infty}2^{(-N+N/\gam)j}
  \norm{\cF^{-1}\phi_{j}\cF\mu_{\lam}\mid M^{\gam}_{\gam}}
  <\del
  \]
  by taking $\lam>0$ so small. To see this, we use the embedding \eqref{eq:doublecrit} to get
  \begin{align*}
    \limsup_{j\to\infty}2^{(-N+N/\gam)j}
    \norm{\cF^{-1}\phi_{j}\cF\mu_{\lam}\mid M^{\gam}_{\gam}}
    &\leq
    \norm{\mu_{\lam}\mid N^{-N+N/\gam}_{\gam,\gam,\infty}}
    \leq
    C\norm{\mu_{\lam}\mid M^{1}_{1}}
    \\
    &=
    C\sup_{x\in\R^{N}}
    \int_{|x-y|\leq\lam^{\al}}|\mu(y)|\,dy
  \end{align*}
  for all $\lam>0$. By absolute continuity of $\mu$, the last term goes to zero as $\lam\rightarrow 0$.
\end{rmk}
\begin{rmk}
  The following distributions belong to Besov--Morrey spaces.
  \begin{enumerate}
    \item When $\gam> p_{F}=1+2/N$, we can take $p$ such that
    $\max\{ \gam, q_{c} \}<p<q_{c}\gam$ with $q_{c}=\frac{N(\gam-1)}{2}$. By using Proposition \ref{inclusion} and Proposition \ref{Sobolevtypeineq}, we have
    \begin{align*}
      \abs{x}^{-\frac{2}{\gam-1}}
      \in
      M^{q_{c}}_{q_{c}\gam/p}
      \subset
      N^{0}_{q_{c},q_{c}\gam/p,\infty}
      \subset
      N^{N/p-2/(\gam-1)}_{p,\gam,\infty}.
    \end{align*}
    Since the assumption of Theorem \ref{maintheorem} is satisfied with $N^{N/p-2/(\gam-1)}_{p,\gam,\infty}$ for above $p$, we see by \eqref{cond:highfreq} that there exists $c>0$ such that, if an initial datum $\mu$ satisfies
    \[
    0\leq\mu(x)\leq c|x|^{-\frac{2}{\gam-1}},\quad x\in\R^{N},
    \]
    then problem $\eqref{eq}$ possesses a local-in-time solution.
    Note that in the case of $\al=1$, problem \eqref{eq} possesses a local-in-time solution when $c<<1$ and does not possess when $c>>1$, as demonstrated in \cite{HisaIshige2018}.
    \item Let $1<\gam<1+2/N$. We see that the Dirac delta measure $\del(x)$ satisfies
    \[
    \del(x) \in M^1 \subset N^{0}_{1,1,\infty}
    \subset N^{-N+N/\gam}_{\gam,\gam,\infty}
    \]
    by using Proposition \ref{inclusion} and Proposition \ref{Sobolevtypeineq}.
    Furthermore, since
    \[
    \pl_{x_{i}}\del(x)\in N^{-N+N/\gam-1}_{\gam,\gam,\infty}
    \quad\text{for}\quad
    i\in\{1,...,N\},
    \]
    we can take $\pl_{x_{i}}\del(x)$ as initial data of problem \eqref{eq}
    for $\gam$ satisfiying
    \begin{align*}
      \begin{cases}
        \gam<\frac{N+2/\al}{N+1} \quad\text{if}\quad N=1,2,
        \\
        \gam<\min\left\{\frac{N}{N-1},\frac{N+2/\al}{N+1}\right\} \quad\text{if}\quad N\geq 3.
      \end{cases}
    \end{align*}
    To our best knowledge, this type of initial data is not yet considered for the problem \eqref{eq}.
  \end{enumerate}
\end{rmk}
\begin{rmk}
  In our definition of mild solution, we do not think about whether the solution converges to the initial value or what class the solution belongs to.
  If we make an additional assumption, we can prove the convergence (see Theorem \ref{thm:convtoini}).
  For the latter, we prove that $u\in C\qty(\:\left]0,T\right];N^{s}_{p,q,\infty}(\R^N))$ (see Theorem \ref{strongsolution}).
\end{rmk}
In the following, we describe our global-in-time existence theorem.
\begin{thm}\label{globalsol}
  Let $\gam$ satisfy the condition
  \begin{align*}
    \gam
    >
    \gam(\al)\coloneqq
    \max\qty{1+\frac{2\al}{N\al+2(1-\al)}
    ,\:\:\frac{4-N+\sqrt{N^{2}+16}}{4}}
  \end{align*}
  and take $\gam\leq q\leq p<\infty$ satisfying
  \[
  \frac{N(\gam-1)}{2}< p
  <\min\qty{\frac{N(\gam-1)}{(4-2\gam)_{+}},
  \frac{N\gam\al(\gam-1)}{2(1+\al\gam-\gam)_{+}}}.
  \]
  Then, there exist $\del >0$ and $M>0$ such that for every $\mu\in \cN^{N/p-2/(\gam -1)}_{p,q,\infty}$ satisfying $\norm{\mu\mid\cN^{N/p-2/(\gam -1)}_{p,q,\infty}}<\del$,
  problem \eqref{eq} possesses the unique solution $u$ in $\R^{N}\times\left]0,\infty\right[$ with $\sup_{0<t}t^{\al/(\gam -1)-N\al/2p}\norm{u(\cdot,t)\mid\cM^{p}_{q}}\leq M$.
\end{thm}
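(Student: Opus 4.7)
The plan is a Banach contraction argument on the scale-invariant weighted space
\[
X_\infty := \left\{ u \text{ measurable on } \R^N \times \left] 0,\infty \right[ \; : \; \|u\|_{X_\infty} := \sup_{t>0} t^\beta \|u(\cdot,t) \mid \cM^p_q\| < \infty \right\},
\]
where $\beta := \al/(\gam-1) - N\al/(2p)$ and $s_c := N/p - 2/(\gam-1)$ are the unique exponents rendering the time weight and the initial-data norm invariant under the scaling $u(x,t) \mapsto \lam^{2\al/(\gam-1)} u(\lam^\al x, \lam^2 t)$ of \eqref{eq}. The hypothesis $p > N(\gam-1)/2$ forces $\beta > 0$ and $s_c < 0$.

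First I would establish the linear estimate $\|P_\al(t)\mu \mid \cM^p_q\| \leq C t^{-\beta} \|\mu \mid \cN^{s_c}_{p,q,\infty}\|$. Using the integral representation $P_\al(t) = \int_0^\infty \Phi_\al(\theta) e^{t^\al \theta \Del}\,d\theta$, combining the Bernstein--Morrey bound $\|e^{\tau\Del} f \mid \cM^p_q\| \lesssim \tau^{s_c/2} \|f \mid \cN^{s_c}_{p,q,\infty}\|$ (valid since $s_c < 0$) with integration against $\Phi_\al$ yields the factor $t^{\al s_c/2} = t^{-\beta}$; the $\theta$-integral $\int_0^\infty \Phi_\al(\theta) \theta^{s_c/2}\,d\theta$ converges precisely when $s_c > -2$, which is the upper bound $p < N(\gam-1)/(4-2\gam)_+$ in the hypothesis.

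Next I would prove the nonlinear estimate. The Morrey smoothing $\|e^{\tau\Del} f \mid \cM^p_q\| \lesssim \tau^{-N(\gam-1)/(2p)} \|f \mid \cM^{p/\gam}_{q/\gam}\|$, combined with integration against $\theta \Phi_\al(\theta)$, gives $\|S_\al(t-\tau) f \mid \cM^p_q\| \lesssim (t-\tau)^{-\al N(\gam-1)/(2p)} \|f \mid \cM^{p/\gam}_{q/\gam}\|$. Taking $f = |u|^{\gam-1} u(\tau)$ and using $\||u|^{\gam-1} u \mid \cM^{p/\gam}_{q/\gam}\| = \|u \mid \cM^p_q\|^\gam$ (for which $q \geq \gam$ is needed) produces
\[
\left\| \int_0^t (t-\tau)^{\al-1} S_\al(t-\tau) |u|^{\gam-1} u(\tau) \, d\tau \mid \cM^p_q \right\| \leq C \|u\|_{X_\infty}^\gam \int_0^t (t-\tau)^{\al - 1 - \al N(\gam-1)/(2p)} \tau^{-\gam\beta} \, d\tau.
\]
The Beta integral converges and evaluates to $C t^{-\beta}$ exactly when $p > N(\gam-1)/2$ (keeping the $(t-\tau)$ exponent above $-1$) and $\gam\beta < 1$; the latter is the upper bound $p < N\gam\al(\gam-1)/(2(1+\al\gam-\gam)_+)$. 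The hypothesis $\gam > \gam(\al)$ is precisely what makes this window for $p$ nonempty: the threshold $(4-N+\sqrt{N^2+16})/4$ is the positive root of $2\gam^2 + (N-4)\gam - N$ and encodes the compatibility $\gam < N(\gam-1)/(4-2\gam)$ needed when $\gam < 2$, while $1 + 2\al/(N\al + 2(1-\al))$ encodes $\gam < N\gam\al(\gam-1)/(2(1+\al\gam-\gam))$ when $1 + \al\gam - \gam > 0$.

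To close the argument, the bilinear Lipschitz counterpart follows from $\big||u|^{\gam-1}u - |v|^{\gam-1}v\big| \leq C(|u|^{\gam-1} + |v|^{\gam-1})|u-v|$ and H\"older's inequality in Morrey spaces, so the operator $\Psi(u)(t) := P_\al(t)\mu + \int_0^t (t-\tau)^{\al-1} S_\al(t-\tau) |u|^{\gam-1} u(\tau) \, d\tau$ is a strict contraction on $\{u \in X_\infty : \|u\|_{X_\infty} \leq M\}$ once $\del$ and $M$ are chosen with $C_1 \del + C_2 M^\gam \leq M$ and $2 C_2 M^{\gam-1} < 1$; Banach's fixed-point theorem then supplies the unique global solution. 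The main obstacle is proving the two Morrey-space smoothing estimates for $P_\al$ and $S_\al$ with sharp time exponents, because these are not convolution semigroups but $\theta$-integrals of $e^{t^\al \theta \Del}$ against the Wright function: one must combine the Morrey heat-kernel decay with precise moment bounds for $\Phi_\al$, and then verify via a Beta-function computation that the resulting time integral reproduces the scale-invariant weight $t^{-\beta}$ throughout the admissible window $\gam > \gam(\al)$.
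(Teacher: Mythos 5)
Your proposal is correct and follows essentially the same route as the paper: the same scale-invariant weighted space $X$ with weight $t^{\beta}$, $\beta=\al/(\gam-1)-N\al/(2p)$, the same $P_{\al}$ and $S_{\al}$ smoothing estimates obtained by integrating the heat-semigroup decay in (Besov--)Morrey spaces against moments of the Wright function (with the moment condition $r>-1$ producing exactly the constraints $s_{c}>-2$ and $\sigma-s<4$), and the same Beta-integral bookkeeping leading to the smallness/contraction conditions; the paper runs an explicit Picard iteration where you invoke Banach's fixed point theorem, which is an immaterial difference. Your identification of the two terms in $\gam(\al)$ as exactly the conditions making the admissible window for $p$ nonempty matches the paper's Remark~\ref{rmk:globalsol}.
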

\begin{rmk}\label{rmk:globalsol}
  The assumptions on $p$ and $q$ in Theorem \ref{globalsol} arise from Theorem \ref{maintheorem} with $s=N/p-2/(\gam -1)$. The condition $\gam>\gam(\al)$ is necessary in order to take such $p$ and $q$.

\end{rmk}
\begin{rmk}
  Zhang and Sun \cite{ZhangSun} determined a Fujita exponent for problem \eqref{eq} with $\mu\in C_{0}(\R^{N})\coloneqq\{ u\in C(\R^{N})\mid \lim_{\abs{x}\to\infty}u(x)=0 \}$;
  \begin{enumerate}[{\rm(i)}]
    \item For $\mu\geq 0$ and $\mu\not\equiv 0$, if $1<\gam<1+2/N$, then the solution of problem \eqref{eq} blows up in finite time.
    \item\label{ZS:2} For $\mu\in L^{q_{c}}(\R^N)$ with $q_{c}=N(\gam-1)/2$, if $\gam\geq 1+2/N$ and $\norm{\mu\mid L^{q_{c}}}$ is sufficiently small, then problem \eqref{eq} has a time global solution.
  \end{enumerate}
  On the other hand, the condition of $\gam$ in Theorem \ref{globalsol} is weaker than the usual Fujita supercritilcal condition $\gam>1+2/N$.
  Note that our setting allows spatial unboundedness of solutions for $t>0$ and in general, for singular initial data, the solution of \eqref{eq} is not necessarily bounded for $t>0$ (see Remark \ref{rmkPalpha}), and thus our results are not in conflict with those of \cite{ZhangSun}.

\end{rmk}
\begin{rmk}
  The exponent
  \[
  \gam(\al)=
  \max\qty{1+\frac{2\al}{N\al+2(1-\al)}
  ,\:\:\frac{4-N+\sqrt{N^{2}+16}}{4}}
  \]
   partially corresponds to the exponent that appears in Remark 1 of \cite{KLT}.
   They showed that there is no nontrivial global weak solution of problem \eqref{eq} in the case of
   $
     1<\gam\leq 1+2\al/(N\al+2(1-\al))
   $.
\end{rmk}


The rest of this paper is organized as follows: In Section 2, we establish the estimates for operators $P_{\al}(t)$ and $S_{\al}(t)$ in Besov--Morrey spaces by use of the Wright type function. In Section 3, we get the time local well-posedness for singular initial data (Theorem  \ref{maintheorem} and \ref{thm:convtoini}). In Section 4, we construct a global-in-time solution for sufficiently small initial data in the sense of ``scale-invariant'' Besov--Morrey norm (Theorem \ref{globalsol}).
In what follows, the letter $C$ denotes generic positive constants and they may change from line to line.
%
\section{Preliminaries}

We recall some preliminary facts about Morrey spaces and Besov--Morrey spaces.
\begin{prop}\label{rmkformorreynorm}
  {\bf (properties of Morrey spaces)}
  \begin{enumerate}
    \item\label{rmkformorreynorm-2}
      $\Mnorm{u^{\gam}}{p/\gam}{q/\gam}=\Mnorm{u}{p}{q}^{\gam}$ for all $u\in M^{p}_{q}$, $\max\qty{1,\gam}\leq q\leq p.$
    \item\label{rmkformorreynorm-3}
    if $u\in M^{p}_{q}$, then $\abs{u(x)}^{q}dx\in M^{\frac{p}{q}}$ and $\onorm{\abs{u(x)}^{q}dx}{M^{\frac{p}{q}}}=\Mnorm{u}{p}{q}^q$.
    \item $\cM^{p}_{q}(\R^N)\subset M^{p}_{q}(\R^N).$
    \item\label{rmkformorreynorm-4}
    $L^p(\R^N)\subset M^{p}_{q}(\R^N)$.
    \item $\abs{x}^{-\frac{N}{p}}\notin L^{p}(\R^N)$ but if $q<p$,
    $\abs{x}^{-\frac{N}{p}}\in M^{p}_{q}(\R^N)$.
  \end{enumerate}
\end{prop}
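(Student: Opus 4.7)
The plan is to verify each of the five items directly from the definitions; none of them requires a deep idea, so the emphasis is on bookkeeping of scaling exponents.

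For items (1) and (2) I would substitute into the definitions and match exponents. For (1), write
\[
\Mnorm{u^{\gam}}{p/\gam}{q/\gam}
= \sup_{x_{0},\, 0<R\leq 1} R^{N\gam/p-N\gam/q}
\onorm{u^{\gam}}{L^{q/\gam}(B(x_{0},R))},
\]
note that $\onorm{u^{\gam}}{L^{q/\gam}(B(x_{0},R))}= \onorm{u}{L^{q}(B(x_{0},R))}^{\gam}$, and observe $R^{N\gam/p-N\gam/q}=(R^{N/p-N/q})^{\gam}$; the hypothesis $\max\{1,\gam\}\leq q$ is exactly what is needed to make $q/\gam\geq 1$. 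For (2), the same substitution into $\onorm{|u|^{q}\,dx}{M^{p/q}}$ gives $R^{Nq/p-N}\int_{B(x_{0},R)}|u|^{q}\,dx=(R^{N/p-N/q}\onorm{u}{L^{q}(B(x_{0},R))})^{q}$, yielding the identity after taking suprema.

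Items (3) and (4) are one-liners. For (3), the supremum in $\Mnorm{\cdot}{p}{q}$ is over $0<R\leq 1$ while that in $\norm{\cdot\mid\cM^{p}_{q}}$ is over all $R>0$, so the inequality (and hence the inclusion) is immediate. For (4), H\"older's inequality on $B(x_{0},R)$ gives $\onorm{u}{L^{q}(B(x_{0},R))}\leq |B(x_{0},R)|^{1/q-1/p}\onorm{u}{L^{p}(B(x_{0},R))}\leq C R^{N/q-N/p}\onorm{u}{L^{p}(\R^{N})}$, and the factor $R^{N/p-N/q}$ cancels the $R^{N/q-N/p}$.

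The only mildly substantive item is (5). The non-membership $|x|^{-N/p}\notin L^{p}(\R^{N})$ is immediate from the divergence of $\int |x|^{-N}\,dx$ at both $0$ and $\infty$. For the membership in $M^{p}_{q}$ when $q<p$, I would split according to the relative size of $|x_{0}|$ and $R$. If $|x_{0}|\leq 2R$, then $B(x_{0},R)\subset B(0,3R)$ and the assumption $Nq/p<N$ gives
\[
\int_{B(x_{0},R)}|x|^{-Nq/p}\,dx \leq \int_{B(0,3R)}|x|^{-Nq/p}\,dx \leq C R^{N-Nq/p}.
\]
If $|x_{0}|>2R$, then every $x\in B(x_{0},R)$ satisfies $|x|\geq |x_{0}|-R>R$, so $|x|^{-Nq/p}<R^{-Nq/p}$ and the volume bound $|B(x_{0},R)|=CR^{N}$ gives the same estimate $CR^{N-Nq/p}$. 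Raising to the power $1/q$ and multiplying by $R^{N/p-N/q}$ produces a constant independent of $x_{0}$ and $R$.

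I do not expect any real obstacle; the potential pitfall is ensuring the case split in (5) handles all $x_{0}$ uniformly, which it does once the two regimes above are treated. The restriction $0<R\leq 1$ in the local Morrey definition is not actually used in (5), so $|x|^{-N/p}$ in fact already lies in $\cM^{p}_{q}$ by the same argument.
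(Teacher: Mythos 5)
Your verification is correct on all five items, and the exponent bookkeeping (in particular the case split $|x_{0}|\leq 2R$ versus $|x_{0}|>2R$ in item (5), with $q<p$ guaranteeing local integrability of $|x|^{-Nq/p}$) is exactly what is needed. The paper states this proposition without proof, treating these as standard facts about Morrey spaces, so there is no alternative argument to compare against; your direct computation from the definitions is the intended one, and your closing observation that the argument for (5) is uniform in all $R>0$ (hence gives membership in $\cM^{p}_{q}$ as well) is consistent with Proposition \ref{rmkformorreynorm} and the way $|x|^{-2/(\gam-1)}$ is used later in the paper.
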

We can find in \cite{KozonoYamazaki} the following two Propositions (Proposition 2.11 and 2.5).
\begin{prop}\label{inclusion}
  For $1\leq q\leq p<\infty$, we have the inclusion relations
  \begin{align*}
    N_{p,q,1}^{0}\subset M^{p}_{q}\subset N_{p,q,\infty}^{0},
    \quad M^{p}\subset N^{0}_{p,1,\infty}.
  \end{align*}
\end{prop}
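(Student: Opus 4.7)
My plan is to establish the three inclusions one at a time, from easiest to hardest, leveraging the Littlewood--Paley decomposition, a Young-type convolution estimate, and the Schwartz decay of the frequency projectors.

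For $N^0_{p,q,1} \subset M^p_q$ I would use the Littlewood--Paley reconstruction
\[
u = \cF^{-1}\phi_{(0)}\cF u + \sum_{j=1}^{\infty} \cF^{-1}\phi_j \cF u,
\]
which converges in $\cS'(\R^N)$ thanks to $\phi_{(0)} + \sum_{j\geq 1}\phi_j \equiv 1$. The $\ell^1$-hypothesis forces absolute convergence also in the Banach space $M^p_q$ (Remark \ref{rmkformorreynorm-1}), so the triangle inequality gives $\Mnorm{u}{p}{q} \leq \onorm{u}{N^0_{p,q,1}}$ directly.

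For $M^p_q \subset N^0_{p,q,\infty}$ the key tool is a Young-type convolution inequality $\Mnorm{K*f}{p}{q} \leq \norm{K}_{L^1}\Mnorm{f}{p}{q}$ for $K \in L^1(\R^N)$, which follows from Minkowski's integral inequality applied inside the defining integral of the Morrey norm. Because $\cF^{-1}\phi_j(x) = 2^{jN}(\cF^{-1}\phi_0)(2^j x)$, the $L^1$-norms $\norm{\cF^{-1}\phi_j}_{L^1} = \norm{\cF^{-1}\phi_0}_{L^1}$ are uniform in $j$ (and similarly for the low-frequency projector $\cF^{-1}\phi_{(0)}$), so $\Mnorm{\cF^{-1}\phi_j\cF u}{p}{q} \leq C\Mnorm{u}{p}{q}$ uniformly, giving $\onorm{u}{N^0_{p,q,\infty}} \leq C\Mnorm{u}{p}{q}$.

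The third inclusion $M^p \subset N^0_{p,1,\infty}$ is the main obstacle: one must convolve a measure $\mu$ against $K_j = \cF^{-1}\phi_j$ and bound the $M^p_1$-norm of the resulting function uniformly in $j$. Fix $x_0 \in \R^N$ and $R \leq 1$; Fubini yields
\[
\int_{B(x_0,R)}|K_j*\mu(y)|\,dy \leq \int \left(\int_{B(x_0-z,R)}|K_j(w)|\,dw\right) d|\mu|(z).
\]
I would split the outer integral into the near range $|x_0-z|\leq 2R$, where the inner integral is bounded by $\norm{K_0}_{L^1}$ and $|\mu|(B(x_0,2R)) \lesssim R^{N-N/p}\onorm{\mu}{M^p}$ by the definition of $M^p$, and dyadic annuli $A_k = \{2^k R \leq |x_0-z| < 2^{k+1}R\}$ in the far range, where the Schwartz decay $|K_j(w)| \lesssim 2^{jN}(1+2^j|w|)^{-M}$ bounds the inner integral by $C_M 2^{jN} R^N (2^{j+k}R)^{-M}$. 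On each annulus of radius $r_k = 2^{k+1}R$ I use $|\mu|(B(x_0,r_k)) \lesssim r_k^{N-N/p}\onorm{\mu}{M^p}$ when $r_k \leq 1$ and, via a covering by unit balls, $|\mu|(B(x_0,r_k)) \lesssim r_k^N\onorm{\mu}{M^p}$ when $r_k > 1$. Choosing $M$ large enough makes the $k$-sums geometric and, after multiplying by $R^{N/p-N}$, yields a bound independent of $j$, $x_0$, and $R$. The most delicate bookkeeping is patching the two regimes where the local Morrey control on $|\mu|$ changes scaling, and verifying that the Schwartz tails sum uniformly in $j$.
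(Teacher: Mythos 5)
First, a point of reference: the paper does not prove Proposition \ref{inclusion} at all --- it is imported verbatim from Kozono--Yamazaki \cite{KozonoYamazaki} (their Propositions 2.11 and 2.5) --- so your argument can only be compared with the standard proofs there, which it essentially follows. Your first two inclusions are correct. For $N^{0}_{p,q,1}\subset M^{p}_{q}$ you should state explicitly that $M^{p}_{q}$ embeds continuously into $\cS'$ (summing $\norm{u\mid L^{q}(Q)}\leq\Mnorm{u}{p}{q}$ over unit cubes against the Schwartz decay of a test function), since you need this to identify the $M^{p}_{q}$-limit of the partial sums with the $\cS'$-limit $u$; with that said, completeness of $M^{p}_{q}$ (Remark \ref{rmkformorreynorm-1}) finishes the argument. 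The Young-type inequality $\Mnorm{K\ast f}{p}{q}\leq\onorm{K}{L^{1}}\Mnorm{f}{p}{q}$ does survive the restriction to radii $R\leq 1$, because translating the ball does not change its radius, so the second inclusion is fine.

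The gap is in the third inclusion, in exactly the regime your decomposition does not isolate: $R\ll 2^{-j}$. Your bound $C_{M}2^{jN}R^{N}(2^{j+k}R)^{-M}$ for the inner integral on the annulus $A_{k}$ discards the ``$1+$'' in the Schwartz decay $(1+2^{j}\abs{w})^{-M}$; when $2^{j+k}R<1$ this is \emph{larger} than the trivial bound $C2^{jN}R^{N}$, and after summing the geometric series and multiplying by $R^{N/p-N}$ you obtain $C(2^{j}R)^{N-M}\onorm{\mu}{M^{p}}$, which blows up as $2^{j}R\to 0$. So the step ``choosing $M$ large enough \dots yields a bound independent of $j$, $x_{0}$, and $R$'' is false for the bound as written. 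The repair is to keep $\min\qty{2^{jN}R^{N},\,2^{jN}R^{N}(2^{j+k}R)^{-M}}$, i.e.\ retain the $1+$. Then the intermediate annuli with $2R<2^{k}R\lesssim 2^{-j}$ contribute in total at most $C2^{jN}R^{N}\abs{\mu}\qty(B(x_{0},2^{1-j}))\leq C2^{jN}R^{N}2^{-j(N-N/p)}\onorm{\mu}{M^{p}}$ (use disjointness of the annuli rather than summing the full-ball bounds, which for $p=1$ would cost a logarithm), and after multiplication by $R^{N/p-N}$ this is $C(2^{j}R)^{N/p}\onorm{\mu}{M^{p}}\leq C\onorm{\mu}{M^{p}}$; the outer annuli are then handled by your geometric-decay argument. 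Equivalently, prove the pointwise bound $\abs{(K_{j}\ast\mu)(y)}\leq C2^{jN/p}\onorm{\mu}{M^{p}}$ by a dyadic decomposition around $y$ at scale $2^{-j}$ and use it when $R\leq 2^{-j}$, reserving your annulus argument for $R\geq 2^{-j}$. With either fix the conclusion holds; without it, the uniformity in $j$ --- which you correctly identify as the crux --- is not established.
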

\begin{prop}\label{Sobolevtypeineq}
  {\bf (Sobolev embedding)}
  Let $1\leq q\leq p<\infty$, $r\in[1,\infty]$ and $s\in\R$.
  Then, for all $l\in\left]0,1\right[$,
  \begin{align*}
    N^{s}_{p,q,r}\subset N^{s-N(1-l)/p}_{p/l,q/l,r},
    \quad N^{s}_{p,q,r}\subset B^{s-N/p}_{\infty, r}.
  \end{align*}
  Here, $B^{s-N/p}_{\infty,r}$ denotes inhomogeneous Besov spaces defined in the same way as inhomogeneous Besov--Morrey spaces by replacing Morrey spaces with standard $L^{\infty}$ spaces.
\end{prop}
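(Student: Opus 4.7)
My plan is to establish both embeddings by combining the Littlewood--Paley characterization of Besov--Morrey spaces with a Bernstein-type inequality adapted to the Morrey scale. The key observation is that each dyadic block $\cF^{-1}\phi_{j}\cF u$ is band-limited (spectrally supported in an annulus of radius $\sim 2^{j}$ for $j\geq 1$, or in a ball of radius $\sim 1$ for the low-frequency part $\cF^{-1}\phi_{(0)}\cF u$), so its Morrey norms at different scales are comparable up to a power of $2^{j}$.

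The main workhorse I would set up first is the following Bernstein-type lemma: if $f\in\cS'(\R^{N})$ satisfies $\mathrm{supp}\,\cF f\subset\{|\xi|\leq R\}$, then for $1\leq q_{1}\leq p_{1}\leq p_{2}<\infty$ with $q_{1}/p_{1}=q_{2}/p_{2}$ one has
\begin{align*}
  \Mnorm{f}{p_{2}}{q_{2}}\leq C\,R^{N/p_{1}-N/p_{2}}\Mnorm{f}{p_{1}}{q_{1}},
  \qquad
  \inftynorm{f}\leq C\,R^{N/p_{1}}\Mnorm{f}{p_{1}}{q_{1}}.
\end{align*}
To prove these, I would write $f=\psi_{R}*f$ where $\psi_{R}(x)=R^{N}\psi(Rx)$ and $\psi\in\cS(\R^{N})$ equals one on $\{|\xi|\leq 1\}$, and then invoke the Morrey-type Young/convolution inequality $\Mnorm{\psi_{R}*f}{p_{2}}{q_{2}}\leq\onorm{\psi_{R}}{L^{\rho}}\Mnorm{f}{p_{1}}{q_{1}}$ with the right Young scaling $1+1/p_{2}=1/\rho+1/p_{1}$ (and the analogous one with $L^{\infty}$ on the left). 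Rescaling $\psi_{R}$ produces the factor $R^{N/p_{1}-N/p_{2}}$ (respectively $R^{N/p_{1}}$).

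With these two Bernstein inequalities in hand, applying the first one with $R=2^{j}$, $p_{1}=p$, $q_{1}=q$, $p_{2}=p/l$, $q_{2}=q/l$ yields
\begin{align*}
  2^{j(s-N(1-l)/p)}\Mnorm{\cF^{-1}\phi_{j}\cF u}{p/l}{q/l}
  \leq C\cdot 2^{js}\Mnorm{\cF^{-1}\phi_{j}\cF u}{p}{q}
\end{align*}
uniformly in $j\geq 1$, and an identical bound for the low-frequency piece ($j=0$), where the cutoff $\phi_{(0)}$ is supported in $\{|\xi|\leq 5/3\}$. Taking the $\ell^{r}$ norm over $j$ gives the first inclusion $N^{s}_{p,q,r}\subset N^{s-N(1-l)/p}_{p/l,q/l,r}$. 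The second inclusion into $B^{s-N/p}_{\infty,r}$ is completely analogous: the second Bernstein inequality applied to each dyadic block gives $\inftynorm{\cF^{-1}\phi_{j}\cF u}\leq C2^{jN/p}\Mnorm{\cF^{-1}\phi_{j}\cF u}{p}{q}$, and again summing with the weight $2^{j(s-N/p)}$ in $\ell^{r}$ produces the desired bound.

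The main obstacle is the Bernstein lemma itself, specifically verifying the Young-type convolution inequality on Morrey spaces with the correct sharp scaling relation $q_{1}/p_{1}=q_{2}/p_{2}$ between the fine indices. This requires care because Morrey norms are not translation-invariantly averaged in the same way as $L^{p}$ norms; one must test the convolution $\psi_{R}*f$ on an arbitrary ball $B(x_{0},\rho)$, split $f$ into its restriction to a concentric enlarged ball and its tail, and absorb the tail via the fast decay of $\psi_{R}$. Everything downstream, the dyadic summation and the treatment of the low-frequency block, is a routine consequence once the Bernstein estimate is secured.
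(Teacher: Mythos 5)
The paper offers no proof of this proposition; it is quoted verbatim from Kozono--Yamazaki (their Proposition 2.5), whose proof runs along exactly the lines you describe: a Bernstein-type inequality for spectrally localized functions in the Morrey scale, applied blockwise and summed in $\ell^{r}$. So your strategy is the standard one and is sound. The one step I would push back on is your formulation of the key lemma via a ``Morrey-type Young inequality'' $\Mnorm{\psi_{R}*f}{p_{2}}{q_{2}}\leq\onorm{\psi_{R}}{L^{\rho}}\Mnorm{f}{p_{1}}{q_{1}}$ with $1+1/p_{2}=1/\rho+1/p_{1}$: this inequality is false for general kernels in $L^{\rho}$ (Morrey spaces do not satisfy Young's inequality in this scaling), and even for a Schwartz kernel the near/far splitting on a ball naturally produces the exponent $N/q_{1}-N/q_{2}$ rather than the desired $N/p_{1}-N/p_{2}$, which is worse since $q_{1}\leq p_{1}$. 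The robust route — and the one that actually exploits the relation $q_{1}/p_{1}=q_{2}/p_{2}$ you correctly identified as essential — is to first prove only the $L^{\infty}$ Bernstein bound $\inftynorm{f}\leq CR^{N/p_{1}}\Mnorm{f}{p_{1}}{q_{1}}$ (via $f=\psi_{R}*f$, decomposition into annuli $|x-y|\sim 2^{k}/R$, and the rapid decay of $\psi_{R}$), and then obtain the Morrey-to-Morrey estimate by H\"older interpolation on each ball, $\Mnorm{f}{p/l}{q/l}\leq\inftynorm{f}^{1-l}\Mnorm{f}{p}{q}^{l}$, which gives precisely the factor $R^{(1-l)N/p}=R^{N/p_{1}-N/p_{2}}$. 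With that substitution everything downstream of your argument (the uniform blockwise bounds for $j\geq 1$, the low-frequency block with $R\sim 1$, and the $\ell^{r}$ summation) goes through as you wrote it.
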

The Wright type function $\Phi_{\al}(x)\:(0<\al<1)$ has the following properties (see \cite{GLM}).
\begin{lem}\label{wright}
  {\bf (properties of Wright type function)}
  \begin{enumerate}[\rm(a)]
    \item\label{wright-a}
    $\Phi_{\al}(\theta)\geq 0$ for $\theta\geq 0$ and $\int_{0}^{\infty}\Phi_{\al}(\theta)\,d\theta =1$.
    \item\label{wright-b}
    $\int_{0}^{\infty}\theta^{r}\Phi_{\al}(\theta)\,d\theta =\frac{\Gam(1+r)}{\Gam(1+\al r)}$ for $r>-1$.
  \end{enumerate}
\end{lem}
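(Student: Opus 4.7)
The plan is to reduce both statements to the Hankel contour representation of $\Phi_{\al}$. Starting from the defining series, I would apply Hankel's integral formula $1/\Gam(\zeta)=(2\pi i)^{-1}\int_{Ha}e^{\sig}\sig^{-\zeta}\,d\sig$ (with $Ha$ a Hankel contour encircling the negative real axis) to each coefficient $1/\Gam(1-\al-\al k)$, then swap sum and contour integral and sum the resulting geometric series in $z\sig^{\al}$. This yields
\[
\Phi_{\al}(z)=\frac{1}{2\pi i}\int_{Ha}\sig^{\al-1}e^{\sig-z\sig^{\al}}\,d\sig,
\]
valid for $0<\al<1$ once $Ha$ is tilted so that $\mathrm{Re}(\sig^{\al})\geq c|\sig|^{\al}$ for some $c>0$ on its tails. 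This representation is the key that unlocks both parts.

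For part \eqref{wright-b}, I would substitute the contour representation into $\int_{0}^{\infty}\theta^{r}\Phi_{\al}(\theta)\,d\theta$, exchange the order of integration (justified by the decay of $e^{-\theta\mathrm{Re}(\sig^{\al})}$ and the chosen contour geometry), evaluate the inner gamma integral $\int_{0}^{\infty}\theta^{r}e^{-\theta\sig^{\al}}\,d\theta=\Gam(r+1)\sig^{-\al(r+1)}$, and then invoke Hankel's formula once more on the remaining contour integral $(2\pi i)^{-1}\int_{Ha}e^{\sig}\sig^{-\al r-1}\,d\sig=1/\Gam(1+\al r)$. Multiplying the two factors produces the identity $\Gam(1+r)/\Gam(1+\al r)$.

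For part \eqref{wright-a}, the normalization $\int_{0}^{\infty}\Phi_{\al}(\theta)\,d\theta=1$ is the specialization $r=0$ of what was just established. Non-negativity is less elementary; I would deduce it by computing the Laplace transform $\int_{0}^{\infty}e^{-s\theta}\Phi_{\al}(\theta)\,d\theta$ via the same contour representation, arriving after an exchange and a geometric sum at the Mittag--Leffler function $E_{\al}(-s)=\sum_{k\geq 0}(-s)^{k}/\Gam(1+\al k)$. By Pollard's classical result, $E_{\al}(-s)$ is completely monotone on $[0,\infty)$ for $0<\al<1$, so by Bernstein's theorem it is the Laplace transform of a positive finite measure. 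Uniqueness of the Laplace transform forces $\Phi_{\al}\geq 0$ a.e., and the entire series for $\Phi_{\al}$ gives continuity, upgrading this to everywhere.

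The main obstacle will be the analytic justification of the contour deformation and the two Fubini exchanges, which crucially use $0<\al<1$ to keep $\mathrm{Re}(\sig^{\al})$ uniformly bounded below on the tails of $Ha$. A secondary subtlety is the range $-1<r<0$ in \eqref{wright-b}: integrability at $\theta=0$ is immediate from the boundedness of $\Phi_{\al}$ near the origin, while integrability at infinity requires the Gaussian-type decay of $\Phi_{\al}$, which itself comes out of a saddle-point estimate on the same contour representation. The appeal to Pollard's complete monotonicity of $E_{\al}(-s)$ is non-elementary but is a classical fact that may be cited.
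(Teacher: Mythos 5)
The paper does not prove this lemma at all; it simply cites \cite{GLM}, and your argument is essentially the classical one found in that reference: the Hankel-contour representation of $\Phi_{\al}$ gives the moment formula by one application of Hankel's integral for $1/\Gam(1+\al r)$, and non-negativity follows from Pollard's complete monotonicity of $E_{\al}(-s)$ via Bernstein's theorem and uniqueness of Laplace transforms. Your sketch is correct; the only slip is terminological --- the series $\sum_{k\geq 0}(-z\sig^{\al})^{k}/k!$ that you resum to obtain $e^{\sig-z\sig^{\al}}$ is an exponential series, not a geometric one (the genuine geometric resummation occurs only later, in the Laplace-transform computation leading to $E_{\al}(-s)$).
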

The operator $P_{\al}(t)$ defined in Definition \ref{def:timefracheatkernel} is the solution operator of the linear problem which is associated with problem \eqref{eq} (see Theorem 3.2 of \cite{Bazhlekova}).
That is, $P_{\al}(t)\mu$ satisfies
\begin{align*}
  ^{C}\pl_{t}^{\al}P_{\al}(t)\mu=\Del P_{\al}(t)\mu.
\end{align*}
for suitable smooth function $\mu$.
\begin{rmk}\label{rmkPalpha}
  In contrast to the case of $\al=1$, we generally cannot expect that solutions of time fractional heat equations are spatially bounded for $t>0$.
  For instance, for Dirac delta measure $\del$, we have
  \begin{align*}
    (P_{\al}(t)\del)(0)
    &=
    \left.\int_{0}^{\infty}
    \Phi_{\al}(\theta)(e^{t^{\al}\theta\Del}\del)(x)
    \,d\theta\right|_{x=0}
    \\
    &=
    \left.\int_{0}^{\infty}
    \frac{\Phi_{\al}(\theta)}{\sqrt{4\pi t^{\al}\theta}^{N}}
    \int_{\R^N}
    \exp\qty(\frac{\abs{x-y}^{2}}{4t^{\al}\theta})\del(y)
    \,dy
    d\theta\right|_{x=0}
    \\
    &=
    \left.\int_{0}^{\infty}
    \frac{\Phi_{\al}(\theta)}{\sqrt{4\pi t^{\al}\theta}^{N}}
    \exp\qty(\frac{\abs{x}^{2}}{4t^{\al}\theta})
    \,d\theta\right|_{x=0}
    =
    \frac{1}{\sqrt{4\pi t^{\al}}^{N}}
    \int_{0}^{\infty}
    \theta^{-N/2}\Phi_{\al}(\theta)
    \,d\theta,
  \end{align*}
  but the last integral has finite value only if $N<2$ from Lemma \ref{wright}-\eqref{wright-b}.
  See also \cite{EK}.
\end{rmk}
The following property is proved in \cite{KozonoYamazaki}(Theorem 3.1).
\begin{lem}\label{smoothingforGauss}
  Let $1\leq q\leq p<\infty$, $r\in[1,\infty]$ and $s\leq\sigma $.
  Then, there exists $C>0$ such that for all $u\in N^{s}_{p,q,r}$, the estimate
  \begin{align*}
    \BMnorm{e^{t\Del}u}{\sigma}{p}{q}{r}
    \leq C
    \qty(1+t^{\frac{s-\sigma}{2}})
    \BMnorm{u}{s}{p}{q}{r}
    \quad\text{for}\quad t>0
  \end{align*}
  holds. Furthermore, if $s<\sigma$, the estimate
  \begin{align*}
    \BMnorm{e^{t\Del}u}{\sigma}{p}{q}{1}
    \leq C
    \qty(1+t^{\frac{s-\sigma}{2}})
    \BMnorm{u}{s}{p}{q}{\infty}
    \quad\text{for}\quad t>0
  \end{align*}
  holds.
\end{lem}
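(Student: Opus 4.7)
The plan is to handle the heat semigroup frequency-by-frequency via the Littlewood--Paley decomposition. Since $\phi_j(D)$ commutes with $e^{t\Del}$, for each $j\geq 1$ I would write
\[
\cF^{-1}\phi_j(\xi)\cF\bigl(e^{t\Del}u\bigr)=K_{t,j}\ast\bigl(\cF^{-1}\widetilde{\phi}_j(\xi)\cF u\bigr),
\]
where $\widetilde{\phi}_j:=\phi_{j-1}+\phi_j+\phi_{j+1}$ equals $1$ on $\mathrm{supp}\,\phi_j$ and $K_{t,j}:=\cF^{-1}(\phi_j(\xi)e^{-t|\xi|^2})$. Rescaling $\xi=2^j\eta$ gives $K_{t,j}(x)=2^{Nj}h_{t2^{2j}}(2^j x)$ with $h_s(y)=\cF^{-1}(\phi_0(\eta)e^{-s|\eta|^2})(y)$, and since $\phi_0$ is supported in an annulus bounded away from the origin, integration by parts exploiting the exponential decay of $e^{-s|\eta|^2}$ on $\mathrm{supp}\,\phi_0$ yields the key Bernstein-type bound $\norm{K_{t,j}\mid L^1(\R^N)}\leq Ce^{-ct2^{2j}}$, uniformly in $j,t$. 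For the low-frequency block I would use $\phi_{(0)}(D)e^{t\Del}u=e^{t\Del}\phi_{(0)}(D)u$ together with the fact that the full Gauss kernel has $L^1$-norm one.

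Next, I would invoke the standard Young-type inequality $\norm{f\ast g\mid M^p_q}\leq\norm{f\mid L^1}\,\Mnorm{g}{p}{q}$ to obtain, for $j\geq 1$,
\[
\Mnorm{\cF^{-1}\phi_j\cF(e^{t\Del}u)}{p}{q}\leq Ce^{-ct2^{2j}}\Mnorm{\cF^{-1}\widetilde{\phi}_j\cF u}{p}{q},
\]
and an analogous uniform bound at the $\phi_{(0)}$ end. Multiplying by $2^{\sig j}$ and inserting $2^{sj}2^{-sj}$, the task reduces to controlling the scalar multiplier $m_j(t):=2^{(\sig-s)j}e^{-ct2^{2j}}$. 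Since $s\leq\sig$, the elementary estimate $x^{(\sig-s)/2}e^{-cx}\leq C$ applied with $x=t2^{2j}$ yields $\sup_{j\geq 1}m_j(t)\leq Ct^{(s-\sig)/2}$ for $0<t\leq 1$, while for $t\geq 1$ every term is exponentially small so $\sup_j m_j(t)\leq C$; combining gives $\sup_j m_j(t)\leq C(1+t^{(s-\sig)/2})$. Pulling this factor out of the $\ell^r$-norm in $j$ (which is legal because $m_j(t)$ is a scalar) delivers the first estimate.

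For the refinement with $\ell^1$ on the left and $\ell^\infty$ on the right, strict inequality $s<\sig$ is exactly what makes $\{m_j(t)\}_{j\geq 1}$ summable. I would extract the supremum on the right,
\[
\sup_k 2^{sk}\Mnorm{\cF^{-1}\widetilde{\phi}_k\cF u}{p}{q}\leq C\BMnorm{u}{s}{p}{q}{\infty},
\]
and then show $\sum_{j\geq 1}m_j(t)\leq C(1+t^{(s-\sig)/2})$ by splitting at $j^*=\tfrac{1}{2}\log_2 t^{-1}$: for $j<j^*$ the exponential is harmless and the geometric tail $\sum 2^{(\sig-s)j}$ is dominated by its largest term $\sim t^{(s-\sig)/2}$, while for $j\geq j^*$ the super-exponential factor $e^{-ct2^{2j}}$ absorbs the polynomial growth. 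For $t\geq 1$ all terms decay like $e^{-ct2^{2j}}$, so the sum is $O(1)$.

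The main obstacle, aside from the bookkeeping of $m_j(t)$, is establishing the $L^1$-bound for the localized heat kernels $K_{t,j}$ with the correct exponential factor; this is where one pays attention to the annular support of $\phi_j$ and uses the smoothness and decay of $\cF^{-1}\phi_0$. The Young-type convolution bound on $M^p_q$ is itself standard but needs to be invoked explicitly. Once these two ingredients are granted, the rest is rearrangement of the Littlewood--Paley norm and elementary scalar estimates.
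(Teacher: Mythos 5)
Your argument is correct: the localized kernel bound $\norm{K_{t,j}\mid L^1}\leq Ce^{-ct2^{2j}}$, the Young-type inequality on $M^p_q$, and the scalar estimates $\sup_j m_j(t)\leq C(1+t^{(s-\sigma)/2})$ (resp.\ $\sum_j m_j(t)\leq C(1+t^{(s-\sigma)/2})$ when $s<\sigma$) are exactly the ingredients needed, and the low-frequency block supplies the additive constant $1$. The paper does not prove this lemma itself but quotes it from Kozono--Yamazaki (Theorem 3.1), whose proof proceeds along essentially the same Littlewood--Paley lines as yours, so there is nothing substantive to compare beyond noting that your reconstruction matches the cited argument.
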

From this lemma, we can obtain estimates for the operators $S_{\al}(t)$ and $P_{\al}(t)$.
\begin{lem}\label{smoothingforSalpha}
  Let $1\leq q\leq p<\infty,\: r\in[1,\infty],\: s\leq\sigma $ and $4>\sigma -s$.
  Then, there exists $C>0$ such that for all $u\in N^{s}_{p,q,r}$, the estimate
  \begin{align*}
    \BMnorm{S_{\al}(t)u}{\sigma}{p}{q}{r}
    \leq C
    \qty(1+t^{\frac{(s-\sigma)\al}{2}})
    \BMnorm{u}{s}{p}{q}{r}
    \quad\text{for}\quad t>0
  \end{align*}
  holds. Furthermore, if $s<\sigma$, the estimate
  \begin{align*}
    \BMnorm{S_{\al}(t)u}{\sigma}{p}{q}{1}
    \leq C
    \qty(1+t^{\frac{(s-\sigma)\al}{2}})
    \BMnorm{u}{s}{p}{q}{\infty}
    \quad\text{for}\quad t>0
  \end{align*}
  holds.
\end{lem}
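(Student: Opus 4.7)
The plan is to reduce Lemma \ref{smoothingforSalpha} to Lemma \ref{smoothingforGauss} by moving the Besov--Morrey norm inside the defining integral
\[
S_{\al}(t)u = \al\int_{0}^{\infty}\theta\Phi_{\al}(\theta)\,e^{t^{\al}\theta\Del}u\,d\theta,
\]
and then using the moments of $\Phi_{\al}$ from Lemma \ref{wright}. Since $\Phi_{\al}(\theta)\geq 0$ by Lemma \ref{wright}-\eqref{wright-a} and the Besov--Morrey norms are given by (weighted $\ell^{r}$ combinations of) Morrey norms that satisfy Minkowski's integral inequality, we first obtain
\[
\BMnorm{S_{\al}(t)u}{\sigma}{p}{q}{r}
\leq
\al\int_{0}^{\infty}\theta\,\Phi_{\al}(\theta)\,\BMnorm{e^{t^{\al}\theta\Del}u}{\sigma}{p}{q}{r}\,d\theta.
\]

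Next I would apply Lemma \ref{smoothingforGauss} with the time parameter $t^{\al}\theta$ in place of $t$, giving
\[
\BMnorm{e^{t^{\al}\theta\Del}u}{\sigma}{p}{q}{r}
\leq C\bigl(1+(t^{\al}\theta)^{(s-\sigma)/2}\bigr)\BMnorm{u}{s}{p}{q}{r}.
\]
Plugging this in splits the bound on $\BMnorm{S_{\al}(t)u}{\sigma}{p}{q}{r}$ into two terms: a first term whose $\theta$-integral is $\int_{0}^{\infty}\theta\Phi_{\al}(\theta)d\theta = 1/\Gam(1+\al)$, and a second term with the $t^{\al(s-\sigma)/2}$ factor pulled out and $\theta$-integral equal to $\int_{0}^{\infty}\theta^{1+(s-\sigma)/2}\Phi_{\al}(\theta)d\theta$.

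By Lemma \ref{wright}-\eqref{wright-b}, the second $\theta$-integral equals $\Gam(2+(s-\sigma)/2)/\Gam(1+\al(1+(s-\sigma)/2))$, and this is finite precisely when $1+(s-\sigma)/2>-1$, i.e.\ when $\sigma-s<4$. This is the hypothesis $4>\sigma-s$ in the lemma, and it is really the only nontrivial point of the proof; all other estimates are routine. Combining the two contributions yields the stated bound
\[
\BMnorm{S_{\al}(t)u}{\sigma}{p}{q}{r}
\leq C\bigl(1+t^{(s-\sigma)\al/2}\bigr)\BMnorm{u}{s}{p}{q}{r}.
\]

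The second estimate, with $N^{s}_{p,q,\infty}$ on the right and $N^{\sigma}_{p,q,1}$ on the left under the strict inequality $s<\sigma$, is proved identically by starting from the second inequality in Lemma \ref{smoothingforGauss} rather than the first; the $\theta$-integrals are the same, and the condition $\sigma-s<4$ again guarantees their convergence. The only expected obstacle is making the Minkowski step precise for the $\ell^{r}$-in-$j$ part of the Besov--Morrey norm, but since the Littlewood--Paley cut-offs $\cF^{-1}\phi_{j}\cF$ commute with $e^{t^{\al}\theta\Del}$, one can apply Minkowski blockwise in $j$ and then in $\ell^{r}$, so this causes no real difficulty.
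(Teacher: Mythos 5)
Your proposal is correct and follows essentially the same route as the paper: Minkowski's inequality to pass the $N^{\sigma}_{p,q,r}$ norm inside the $\theta$-integral, Lemma \ref{smoothingforGauss} applied at time $t^{\al}\theta$, and Lemma \ref{wright}-\eqref{wright-b} to evaluate the two moments, with the condition $4>\sigma-s$ guaranteeing convergence of $\int_{0}^{\infty}\theta^{1+(s-\sigma)/2}\Phi_{\al}(\theta)\,d\theta$. Your closing remark on justifying the Minkowski step blockwise in $j$ is a reasonable elaboration of a point the paper leaves implicit, but otherwise the arguments coincide.
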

\begin{proof}
  Lemma \ref{smoothingforGauss}, Lemma \ref{wright}-\eqref{wright-b} and Minkowski inequality give
  \begin{align*}
    \BMnorm{S_{\al}(t)u}{\sigma}{p}{q}{r}
    &=\BMnorm{\al\int_{0}^{\infty}\theta\Phi_{\al}(\theta)e^{t^{\al}\theta\Del}u(x)\,d\theta}{\sigma}{p}{q}{r}
    \\
    &\leq
    \al\int_{0}^{\infty}\theta\Phi_{\al}(\theta)\BMnorm{e^{t^{\al}\theta\Del}u}{\sigma}{p}{q}{r}\,d\theta
    \\
    &\leq
    C\al\int_{0}^{\infty}
    \theta\Phi_{\al}(\theta)\qty(1+(t^{\al}\theta)^{\frac{s-\sigma}{2}})
    \BMnorm{u}{s}{p}{q}{r}\,d\theta\\
    &=
    C\BMnorm{u}{s}{p}{q}{r}
    \qty(
    \int_{0}^{\infty}\theta\Phi_{\al}(\theta)\,d\theta
    \:+\:
    t^{\frac{(s-\sigma)\al}{2}}
    \int_{0}^{\infty}
    \theta^{1+\frac{s-\sigma}{2}}
    \Phi_{\al}(\theta)\,d\theta
    )
    \\
    &\leq
    C\BMnorm{u}{s}{p}{q}{r}
    \qty(1+t^{\frac{(s-\sigma)\al}{2}}).
  \end{align*}
  Note that $\:\:1+\frac{s-\sigma}{2}>-1\:\:$ if $\:\:4>\sigma-s$.
\end{proof}
\begin{lem}\label{smoothingforPalpha}
  Let $1\leq q\leq p<\infty,\: r\in[1,\infty],\: s\leq\sigma $ and $2>\sigma -s$.
  Then, there exists $C>0$ such that for all $u\in N^{s}_{p,q,r}$, the estimate
  \begin{align*}
    \BMnorm{P_{\al}(t)u}{\sigma}{p}{q}{r}
    \leq C
    \qty(1+t^{\frac{(s-\sigma)\al}{2}})
    \BMnorm{u}{s}{p}{q}{r}
    \quad\text{for}\quad t>0
  \end{align*}
  holds. Furthermore, if $s<\sigma$, the estimate
  \begin{align*}
    \BMnorm{P_{\al}(t)u}{\sigma}{p}{q}{1}
    \leq C
    \qty(1+t^{\frac{(s-\sigma)\al}{2}})
    \BMnorm{u}{s}{p}{q}{\infty}
    \quad\text{for}\quad t>0
  \end{align*}
  holds.
\end{lem}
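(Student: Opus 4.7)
The plan is to mimic the proof of Lemma \ref{smoothingforSalpha} verbatim, with the sole difference that the integrand defining $P_{\al}(t)$ lacks the extra factor $\theta$ that $S_{\al}(t)$ carries; this is precisely why the admissible range shrinks from $4>\sig-s$ to $2>\sig-s$.

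First, I would apply Minkowski's integral inequality to the representation
\[
P_{\al}(t)u(x)=\int_{0}^{\infty}\Phi_{\al}(\theta)\,e^{t^{\al}\theta\Del}u(x)\,d\theta
\]
to bring the $N^{\sig}_{p,q,r}$-norm inside the $\theta$-integral, which is legal because $\Phi_{\al}(\theta)\geq 0$ by Lemma \ref{wright}-\eqref{wright-a}. Then I would apply Lemma \ref{smoothingforGauss} with time parameter $t^{\al}\theta$ to get
\[
\BMnorm{e^{t^{\al}\theta\Del}u}{\sig}{p}{q}{r}
\leq C\qty(1+(t^{\al}\theta)^{\frac{s-\sig}{2}})\BMnorm{u}{s}{p}{q}{r}.
\]

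Substituting this estimate and splitting the integral into the two summands yields
\[
\BMnorm{P_{\al}(t)u}{\sig}{p}{q}{r}
\leq
C\BMnorm{u}{s}{p}{q}{r}
\qty(\int_{0}^{\infty}\Phi_{\al}(\theta)\,d\theta
+t^{\frac{(s-\sig)\al}{2}}\int_{0}^{\infty}\theta^{\frac{s-\sig}{2}}\Phi_{\al}(\theta)\,d\theta).
\]
The first integral equals $1$ by Lemma \ref{wright}-\eqref{wright-a}. The second integral is handled by Lemma \ref{wright}-\eqref{wright-b}, which requires exponent $\frac{s-\sig}{2}>-1$; this is exactly the hypothesis $2>\sig-s$, and it produces the finite constant $\Gam(1+\tfrac{s-\sig}{2})/\Gam(1+\al\tfrac{s-\sig}{2})$. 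This gives the first claimed estimate.

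For the refined estimate with $l^{1}$ on the left and $l^{\infty}$ on the right (available when $s<\sig$), I would repeat the same steps verbatim but invoke the second conclusion of Lemma \ref{smoothingforGauss} in place of the first; the $\theta$-integrals are identical and the hypothesis $\sig-s<2$ again guarantees their convergence via Lemma \ref{wright}-\eqref{wright-b}. I do not foresee a genuine obstacle here — the only point that needs attention is the threshold $2>\sig-s$, which arises purely from the small-$\theta$ integrability of $\theta^{(s-\sig)/2}\Phi_{\al}(\theta)$ and is the natural counterpart of the $4>\sig-s$ condition in Lemma \ref{smoothingforSalpha}.
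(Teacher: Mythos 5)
Your proposal is correct and is exactly the argument the paper intends: the paper's proof of Lemma \ref{smoothingforPalpha} simply says it is similar to that of Lemma \ref{smoothingforSalpha}, which proceeds by Minkowski's inequality, Lemma \ref{smoothingforGauss} at time $t^{\al}\theta$, and Lemma \ref{wright}-\eqref{wright-b}. You also correctly pinpoint why the threshold drops from $4>\sig-s$ to $2>\sig-s$, namely the missing factor $\theta$ in the integrand defining $P_{\al}(t)$.
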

\begin{proof}
  The proof is similar to that of Lemma \ref{smoothingforSalpha}.
\end{proof}
\section{Existence results}
This section is divided into two subsections.
In the first subsection, we construct the local-in-time mild solution to problem \eqref{eq} by using an iterative method.
In the second subsection, we consider the convergence of solutions to initial data and the continuity of solutions in time.
\subsection{Local-in-time mild solution}\label{localsol}

We begin the proof of Theorem \ref{maintheorem}.
Fix $T>0.$ We write
\begin{align*}
  &X_{T}\coloneqq
  \qty{
  u(x,t) : \text{Lebesgue measurable in } \R^{N}\times\left]0,T\right[
  \mid
  \xtnorm{u}<\infty
  },
\end{align*}
where
\begin{align*}
  \norm{u\mid X_{T}}\coloneqq\sup_{0<t<T}t^{-s\al/2}\Mnorm{u(\cdot,t)}{p}{q}.
\end{align*}
We set $u_{0}\coloneqq P_{\al}(t)\mu$ and define $u_{n}\:(n\in\Z_{\geq 1})$ inductively by
\begin{align*}
  u_{n}(t)\coloneqq u_{0}(t)+\int_{0}^{t}(t-\tau)^{\al -1}S_{\al}(t-\tau)|u_{n-1}|^{\gam -1}u_{n-1}(\tau)\,d\tau.
\end{align*}
We prepare the following three lemmata to prove Theorem \ref{maintheorem}.
\begin{lem}\label{u0bound}
  Let $1\leq q\leq p<\infty, -2<s<0$ and $\del>0$.
  Then, there exists $C_{2}>0$ such that for all $\mu(x)\in N^{s}_{p,q,\infty}$ satisfying
  $$
    \limsup_{j\rightarrow\infty}2^{sj}\Mnorm{\cF^{-1}\phi_{j}\cF\mu}{p}{q}<\del,
  $$ there exists a positive number $T$ such that
  \[
  \norm{u_{0}\mid X_{T}}\leq C_{2}\del.
  \]
\end{lem}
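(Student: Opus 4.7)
The plan is to split $\mu$ by its Littlewood--Paley blocks into a low-frequency remainder $L$ (essentially smooth, but of size depending on $\mu$) and a high-frequency tail $H$ whose inhomogeneous Besov--Morrey norm is of order $\del$, to apply Lemma \ref{smoothingforPalpha} and the embedding $N^0_{p,q,1}\subset M^p_q$ from Proposition \ref{inclusion} to each piece separately, and to choose $T>0$ small enough to absorb the $L$-contribution. The key point is that the weight $t^{-s\al/2}$ (with $-s\al/2>0$) tames the singularity coming from Lemma \ref{smoothingforPalpha} applied to $H$ while simultaneously shrinking the $L$-piece as $t\to 0$.

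Concretely, I will first use the hypothesis to pick $J_0\in\Z_{\geq 1}$ with $2^{sj}\Mnorm{\cF^{-1}\phi_j\cF\mu}{p}{q}<\del$ for all $j\geq J_0$, and then set
\[
L\coloneqq\cF^{-1}\phi_{(0)}\cF\mu+\sum_{j=1}^{J_0}\cF^{-1}\phi_j\cF\mu,\qquad H\coloneqq\mu-L.
\]
Exploiting the nearly disjoint supports of the $\phi_k$, the block $\cF^{-1}\phi_k\cF H$ vanishes for $k\leq J_0-1$, and for $k\geq J_0$ it is a sum of at most two terms of the form $\cF^{-1}(\phi_k\phi_j)\cF\mu$ with $j>J_0$; using the uniform $M^p_q$-boundedness of the convolution operators $\cF^{-1}\phi_k\cF$ (whose kernels are dilates of a single Schwartz function, hence of uniformly bounded $L^1$ mass), this should yield $\BMnorm{H}{s}{p}{q}{\infty}\leq C\del$ with $C$ independent of $J_0$ and $\mu$.

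For the high-frequency piece, the assumption $-2<s<0$ permits the second estimate of Lemma \ref{smoothingforPalpha} with $\sigma=0$ and $r=1$, and combined with Proposition \ref{inclusion} this produces $\Mnorm{P_\al(t)H}{p}{q}\leq C\del\,(1+t^{s\al/2})$. On $(0,1]$ the bracket is bounded by $2t^{s\al/2}$ (since $s\al/2<0$), so $t^{-s\al/2}\Mnorm{P_\al(t)H}{p}{q}\leq C\del$ uniformly. For the low-frequency piece, $L$ is a finite Littlewood--Paley sum, hence lies in $N^0_{p,q,1}$ with some norm $C_\mu$ depending on $\mu$; applying Lemma \ref{smoothingforPalpha} with $\sigma=s=0$, $r=1$ and Proposition \ref{inclusion} yields the uniform bound $\Mnorm{P_\al(t)L}{p}{q}\leq CC_\mu$. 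Since $-s\al/2>0$, choosing $T\in(0,1]$ small enough that $CC_\mu T^{-s\al/2}\leq\del$ gives $t^{-s\al/2}\Mnorm{P_\al(t)L}{p}{q}\leq\del$ on $(0,T)$. Adding the two contributions delivers $\xtnorm{u_0}\leq C_2\del$.

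The main obstacle I anticipate is the bookkeeping needed to establish $\BMnorm{H}{s}{p}{q}{\infty}\leq C\del$. The hypothesis only controls the individual blocks $\cF^{-1}\phi_j\cF\mu$ for $j\geq J_0$, while the blocks $\cF^{-1}\phi_k\cF H$ near the transition $k\approx J_0$ involve cross-products $\phi_k\phi_j$, so I must verify that the associated Fourier multipliers act on $M^p_q$ with constants independent of $J_0$, preventing any hidden dependence on $\mu$ from leaking into the final bound for $H$. Once this is settled, the rest of the argument is a routine assembly of Lemma \ref{smoothingforPalpha} and Proposition \ref{inclusion}.
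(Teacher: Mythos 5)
Your proposal is correct and takes essentially the same route as the paper's proof: your splitting $\mu=L+H$ at level $J_0$ coincides with the paper's $\mu=\mu_1+\mu_2$ (since $\phi_{(0)}(2^{-m}\xi)=\phi_{(0)}(\xi)+\sum_{j=1}^{m}\phi_j(\xi)$), and both arguments bound the high-frequency piece in $N^{s}_{p,q,\infty}$ by $C\del$ via the uniform $L^1$ bound on the dyadic multipliers, then absorb the $\mu$-dependent low-frequency contribution by shrinking $T$, using Lemma \ref{smoothingforPalpha} and Proposition \ref{inclusion}. The only cosmetic difference is that the paper measures $\mu_1$ in $N^{s/2}_{p,q,\infty}$ and gains the factor $T^{-s\al/4}$, whereas you measure $L$ in $N^{0}_{p,q,1}$ and gain the full $T^{-s\al/2}$; both tend to zero as $T\to 0$, so the arguments are interchangeable.
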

\begin{proof}
  Let $C_{S}$ be a number appering in Lemma \ref{smoothingforPalpha}, namely:
  \begin{align*}
    \BMnorm{P_{\al}(t)u}{0}{p}{q}{1}
    \leq C_{S}
    \qty(1+t^{s\al/2})
    \BMnorm{u}{s}{p}{q}{\infty}
    \quad\text{for}\quad t>0, \: u\in N^{s}_{p,q,\infty}.
  \end{align*}
  Set $C_{1}\coloneqq\max\{ 1, 2\onorm{\cF^{-1}\phi_{0}}{L^{1}(\R^N)}\}$ and
   $C_{0}\coloneqq C_{S}C_{1}$.
  There exist $\ep>0$ and $m\in\N$ such that for every $j\geq m$, $2^{sj}\Mnorm{\cF^{-1}\phi_{j}\cF\mu}{p}{q}\leq\del-\ep<\del$.
  We put $\mu_{1}\coloneqq\cF^{-1}\phi_{(0)}(2^{-m}\:\cdot\:)\cF\mu$ and $\mu_{2}\coloneqq\mu-\mu_{1}$. By a direct calculation, we can check the following:
  \begin{align*}
    \cF^{-1}\phi_{j}\cF\mu_{1}
    =
    \begin{cases}
      \cF^{-1}\phi_{j}\cF\mu & \text{for}\quad j\leq m-1,\\
      \cF^{-1}(\phi_{m-1}+\phi_{m})\phi_{j}\cF\mu & \text{for}\quad j=m,m+1,\\
      0 & \text{for}\quad j\geq m+2,
    \end{cases}
  \end{align*}
  and
  \begin{align*}
    \cF^{-1}\phi_{j}\cF\mu_{2}
    =
    \begin{cases}
      0 & \text{for}\quad j\leq m-1,\\
      \cF^{-1}(\phi_{m+1}+\phi_{m+2})\phi_{j}\cF\mu & \text{for}\quad j=m,m+1,\\
      \cF^{-1}\phi_{j}\cF\mu & \text{for}\quad j\geq m+2.
    \end{cases}
  \end{align*}
  This yields $\BMnorm{\mu_{2}}{s}{p}{q}{\infty}\leq C_{1}(\del-\ep)$.
  Lemma \ref{smoothingforPalpha} gives that
  \begin{align*}
    t^{-s\al/2}
    \BMnorm{P_{\al}(t)\mu_{2}}{0}{p}{q}{1}
    &\leq C_{S}
    \qty(1+t^{s\al/2})
    t^{-s\al/2}
    \BMnorm{\mu_{2}}{s}{p}{q}{\infty}
    \\
    \leq
    C_{S}C_{1}(\del-\ep)
    \qty(t^{-s\al/2}+1)
    &\leq
    C_{0}(\del-\ep)
    \qty(T^{-s\al/2}+1)
    <
    C_{0}\del-\frac{\ep C_{0}}{2}
  \end{align*}
  for all $t\in\left]0,T\right[$, by taking $T>0$ sufficiently small.
  On the other hand, $\mu_{1}\in N_{p,q,\infty}^{s/2}$ and $-s/2<1<2$. Thus, again using Lemma \ref{smoothingforPalpha},
  \begin{align*}
    t^{-s\al/2}
    \BMnorm{P_{\al}(t)\mu_{1}}{0}{p}{q}{1}
    &\leq
    t^{-s\al/2}
    C_{S}\qty(1+t^{s\al/4})
    \BMnorm{\mu_{1}}{s/2}{p}{q}{\infty}\\
    &\leq
    C_{S}T^{-s\al/4}
    \qty(1+T^{-s\al/4})
    \BMnorm{\mu_{1}}{s/2}{p}{q}{\infty}
    <
    \frac{C_{0}\ep}{2}
  \end{align*}
  for all $t\in\left]0,T\right[$, by taking $T>0$ sufficiently small.
  Therefore,
  \begin{align*}
    t^{-s\al/2}
    \Mnorm{P_{\al}(t)\mu}{p}{q}
    &\leq C
    t^{-s\al/2}
    \BMnorm{P_{\al}(t)\mu}{0}{p}{q}{1}\\
    &\leq
    Ct^{-s\al/2}
    \BMnorm{P_{\al}(t)\mu_{1}}{0}{p}{q}{1}
    +
    Ct^{-s\al/2}
    \BMnorm{P_{\al}(t)\mu_{2}}{0}{p}{q}{1}
    \\
    &\leq
    C\qty(C_{0}\del-\frac{\ep C_{0}}{2}+\frac{C_{0}\ep}{2})
    =CC_{0}\del=C\del
  \end{align*}
  for all $t\in\left]0,T\right[$, by taking $T>0$ sufficiently small. Thus, we reach the conclusion.
\end{proof}
\begin{lem}\label{zenkashiki}
  Let $\gam>1,\:T\leq 1,\:\gam\leq q\leq p<\infty,\:-2/\al\gam<s<0,$ and

   $s\geq\frac{N}{p}-\frac{2}{\gam -1}$.
  Then, there exists $C_{3}>0$ independent of $T$ such that
  \begin{align*}
    \norm{u_{n+1}\mid X_{T}}
    &\leq
    \norm{u_{0}\mid X_{T}}+
    C_{3}\norm{u_{n}\mid X_{T}}^{\gam}
  \end{align*}
  for $n=0,1,...$.
\end{lem}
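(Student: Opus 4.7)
The plan is to apply $\xtnorm{\cdot}$ to the identity
\begin{align*}
u_{n+1}(t)-u_{0}(t)=\int_{0}^{t}(t-\tau)^{\al-1}S_{\al}(t-\tau)|u_{n}|^{\gam-1}u_{n}(\tau)\,d\tau,
\end{align*}
invoke Minkowski's inequality to move $\Mnorm{\cdot}{p}{q}$ inside the $\tau$-integral, and then estimate the integrand pointwise in $\tau$. The pointwise bound follows from chaining three ingredients: the power rule in Proposition \ref{rmkformorreynorm}, which gives $|u_{n}|^{\gam-1}u_{n}(\tau)\in M^{p/\gam}_{q/\gam}$ with norm $\Mnorm{u_{n}(\tau)}{p}{q}^{\gam}$; the inclusion $M^{p/\gam}_{q/\gam}\subset N^{0}_{p/\gam,q/\gam,\infty}$ from Proposition \ref{inclusion} followed by the second estimate of Lemma \ref{smoothingforSalpha} with $(s,\sigma)=(0,N(\gam-1)/p)$; and the Sobolev embedding of Proposition \ref{Sobolevtypeineq} with $l=1/\gam$, which together with Proposition \ref{inclusion} gives the chain $N^{N(\gam-1)/p}_{p/\gam,q/\gam,1}\subset N^{0}_{p,q,1}\subset M^{p}_{q}$. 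Using $T\leq 1$ to absorb the additive constant $1$ that appears in Lemma \ref{smoothingforSalpha} into the singular factor, this produces
\begin{align*}
\Mnorm{S_{\al}(t-\tau)|u_{n}|^{\gam-1}u_{n}(\tau)}{p}{q}\leq C(t-\tau)^{-N(\gam-1)\al/(2p)}\Mnorm{u_{n}(\tau)}{p}{q}^{\gam}.
\end{align*}

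Substituting $\Mnorm{u_{n}(\tau)}{p}{q}\leq \tau^{s\al/2}\xtnorm{u_{n}}$ and recognizing a beta integral in $\tau$, I obtain
\begin{align*}
\Mnorm{u_{n+1}(t)-u_{0}(t)}{p}{q}\leq C\,B\!\left(\al-\tfrac{N(\gam-1)\al}{2p},\,1+\tfrac{s\al\gam}{2}\right)t^{\al-\frac{N(\gam-1)\al}{2p}+\frac{s\al\gam}{2}}\xtnorm{u_{n}}^{\gam}.
\end{align*}
Finiteness of the beta value is equivalent to $p>N(\gam-1)/2$ and $s>-2/(\al\gam)$; the former is forced by $s<0$ together with $s\geq N/p-2/(\gam-1)$, and the latter is one of our hypotheses. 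After multiplying both sides by $t^{-s\al/2}$, the exponent of $t$ collapses to $\tfrac{\al(\gam-1)}{2}\bigl(s+\tfrac{2}{\gam-1}-\tfrac{N}{p}\bigr)$, which is nonnegative precisely because of the scaling condition $s\geq N/p-2/(\gam-1)$. Since $t\leq T\leq 1$, this nonnegative power of $t$ is bounded by $1$, yielding $\xtnorm{u_{n+1}}\leq \xtnorm{u_{0}}+C_{3}\xtnorm{u_{n}}^{\gam}$ with $C_{3}$ independent of $T$.

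The main obstacle is the bookkeeping that forces every hypothesis to line up simultaneously: the inequality $\sigma-s=N(\gam-1)/p<4$ needed to invoke Lemma \ref{smoothingforSalpha}, the two beta-integrability conditions at the endpoints $\tau=0$ and $\tau=t$, and the sign of the final exponent of $t$. The scaling-critical case $s=N/p-2/(\gam-1)$ is the tightest and is what forces the final exponent of $t$ to be exactly zero, which in turn is the reason the bound is genuinely $T$-independent rather than only bounded for small $T$.
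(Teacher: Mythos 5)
Your proposal is correct and follows essentially the same route as the paper's proof: the same chain $M^{p/\gam}_{q/\gam}\subset N^{0}_{p/\gam,q/\gam,\infty}$, the smoothing estimate for $S_{\al}$ with $(s,\sigma)=(0,N(\gam-1)/p)$, the Sobolev embedding with $l=1/\gam$ back into $N^{0}_{p,q,1}\subset M^{p}_{q}$, the power rule, and the same beta-integral bookkeeping with the exponent of $t$ controlled by $s\geq N/p-2/(\gam-1)$ and $T\leq 1$. No gaps.
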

\begin{proof}
  By using Proposition \ref{inclusion}, Proposition \ref{Sobolevtypeineq},
  Lemma \ref{smoothingforSalpha} and Proposition \ref{rmkformorreynorm}.\ref{rmkformorreynorm-2}, we obtain the estimate
  \begin{align*}
    \Mnorm{u_{n+1}(\cdot,t)-u_{0}(\cdot,t)}{p}{q}
    \leq C&\BMnorm{u_{n+1}(\cdot,t)-u_{0}(\cdot,t)}{0}{p}{q}{1}
    \\
    \leq C&
    \int_{0}^{t}(t-\tau)^{\al -1}\BMnorm{S_{\al}(t-\tau)|u_{n}|^{\gam-1}u_{n}(\cdot,\tau)}{0}{p}{q}{1}\,d\tau
    \\
    \leq C&
    \int_{0}^{t}(t-\tau)^{\al -1}
    \BMnorm{S_{\al}(t-\tau)|u_{n}|^{\gam-1}u_{n}(\cdot,\tau)}{\frac{N(\gam -1)}{p}}{p/\gam}{q/\gam}{1}\,d\tau
    \\
    \leq C&
    \int_{0}^{t}(t-\tau)^{\al -1}
    \qty(1+(t-\tau)^{-\frac{N(\gam -1)\al}{2p}})
    \\
    &\qquad\qquad\qquad\qquad\times
    \BMnorm{|u_{n}|^{\gam-1}u_{n}(\cdot,\tau)}{0}{p/\gam}{q/\gam}{\infty}
    \,d\tau
    \\
    \leq C&
    \int_{0}^{t}(t-\tau)^{\al -1-\frac{N(\gam -1)\al}{2p}}
    \Mnorm{|u_{n}|^{\gam-1}u_{n}(\cdot,\tau)}{p/\gam}{q/\gam}
    \,d\tau
    \\
    \leq C&
    \norm{u_{n}\mid X_{T}}^{\gam}\int_{0}^{t}(t-\tau)^{\al -1-\frac{N(\gam -1)\al}{2p}}
    \tau^{\frac{s\gam\al}{2}}
    \,d\tau.
  \end{align*}
  Here, we used the relation $N(\gam-1)/2p<1$ which comes from $s<0$ and $s\geq N/p-2/(\gam-1)$.
  Therefore, we have
  \begin{align*}
    t^{-\frac{s\al}{2}}\norm{u_{n+1}-u_{0}\mid M^{p}_{q}}
    &\leq C
    \norm{u_{n}\mid X_{T}}^{\gam}
    t^{\al -\frac{N(\gam -1)\al}{2p}+\frac{s\gam\al}{2}-\frac{s\al}{2}}
    \\
    &\leq C
    \norm{u_{n}\mid X_{T}}^{\gam}
  \end{align*}
  for all $t<T(\leq 1)$. Then, we take a supremum over $t\in\left] 0,T \right[$ and we get
  \begin{align*}
  \norm{u_{n+1}-u_{0}\mid X_{T}}
  \leq C
  \norm{u_{n}\mid X_{T}}^{\gam}
\end{align*}
for all $T(\leq 1)$.
\end{proof}
By using Lemma \ref{u0bound} and Lemma \ref{zenkashiki} repeatedly, we can check that functions $u_{n}$ have a bound
\begin{align}\label{bound}
\sup_{n\in\N}\norm{u_{n}\mid X_{T}}\leq M:=2C_{2}\del
\end{align}
if $\del>0$ satisfies $2^{\gam}C_{2}^{\gam-1}C_{3}\del^{\gam-1}\leq 1$.
\begin{lem}
  Let $\gam>1,\:T\leq 1,\:\gam\leq q\leq p<\infty,\:\max\{-2,-2/\al\gam\}<s<0,$ and

   $s\geq\frac{N}{p}-\frac{2}{\gam -1}$.
  Then, there exists $C>0$ independent of $T$ such that
  \begin{align*}
    \norm{u_{n+2}-u_{n+1}\mid X_{T}}\leq C\del^{\gam -1}
    \norm{u_{n+1}-u_{n}\mid X_{T}}
  \end{align*}
  for $n=0,1,...$.
\end{lem}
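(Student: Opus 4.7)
The plan is to mirror the proof of Lemma \ref{zenkashiki}, this time applied to the difference $u_{n+2}-u_{n+1}$ in place of $u_{n+1}-u_{0}$. Subtracting the defining integral equations for $u_{n+2}$ and $u_{n+1}$ yields
\[
u_{n+2}(t)-u_{n+1}(t)=\int_{0}^{t}(t-\tau)^{\al-1}S_{\al}(t-\tau)\bigl(|u_{n+1}|^{\gam-1}u_{n+1}-|u_{n}|^{\gam-1}u_{n}\bigr)(\tau)\,d\tau,
\]
so the only new ingredient is a replacement for the pointwise bound $\Mnorm{|u_n|^{\gam-1}u_n}{p/\gam}{q/\gam}\leq\Mnorm{u_n}{p}{q}^{\gam}$ used previously.

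For this I would use the elementary pointwise inequality
\[
\bigl||a|^{\gam-1}a-|b|^{\gam-1}b\bigr|\leq \gam\bigl(|a|^{\gam-1}+|b|^{\gam-1}\bigr)|a-b|,
\]
combined with H\"older's inequality in Morrey spaces (splitting $\gam/p=(\gam-1)/p+1/p$ and $\gam/q=(\gam-1)/q+1/q$) and Proposition \ref{rmkformorreynorm}.\ref{rmkformorreynorm-2}, to obtain
\[
\Mnorm{|u_{n+1}|^{\gam-1}u_{n+1}-|u_{n}|^{\gam-1}u_{n}}{p/\gam}{q/\gam}\leq C\bigl(\Mnorm{u_{n+1}}{p}{q}^{\gam-1}+\Mnorm{u_{n}}{p}{q}^{\gam-1}\bigr)\Mnorm{u_{n+1}-u_{n}}{p}{q}.
\]

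With this Lipschitz-type bound in hand, the remaining argument runs identically to the proof of Lemma \ref{zenkashiki}: I apply Proposition \ref{inclusion}, the Sobolev embedding of Proposition \ref{Sobolevtypeineq} (with parameter $l=1/\gam$), and the smoothing estimate for $S_\al$ in Lemma \ref{smoothingforSalpha} (whose hypothesis $4>N(\gam-1)/p$ is guaranteed by $s<0$ together with $s\geq N/p-2/(\gam-1)$, which force $N(\gam-1)/p<2$). Converting the Morrey norms at time $\tau$ into factors of $\tau^{s\al/2}$ times the appropriate $X_T$-norms leads to
\begin{align*}
\Mnorm{u_{n+2}(\cdot,t)-u_{n+1}(\cdot,t)}{p}{q}
\leq\;&C\bigl(\norm{u_{n+1}\mid X_T}^{\gam-1}+\norm{u_n\mid X_T}^{\gam-1}\bigr)\norm{u_{n+1}-u_n\mid X_T}\\
&\times\int_{0}^{t}(t-\tau)^{\al-1-N(\gam-1)\al/(2p)}\tau^{\gam s\al/2}\,d\tau.
\end{align*}
The Beta integral converges exactly under the conditions $N(\gam-1)/(2p)<1$ and $s>-2/(\al\gam)$, producing a constant times $t^{\al[1-N(\gam-1)/(2p)+(\gam-1)s/2]}$ after dividing by $t^{s\al/2}$. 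The scale-critical condition $s\geq N/p-2/(\gam-1)$ makes this exponent nonnegative, and since $T\leq 1$ the resulting factor is bounded by a constant independent of $T$. The extra hypothesis $s>-2$ is inherited through the uniform bound \eqref{bound} (which invokes Lemmas \ref{u0bound} and \ref{zenkashiki}); that bound yields
\[
\norm{u_{n+1}\mid X_T}^{\gam-1}+\norm{u_n\mid X_T}^{\gam-1}\leq 2M^{\gam-1}=C\del^{\gam-1},
\]
and taking the supremum over $t\in\left]0,T\right[$ gives the claimed contraction estimate. The main obstacle is purely bookkeeping: producing the difference-form Lipschitz bound for the nonlinearity in Morrey spaces cleanly, after which all remaining exponent arithmetic is literally that of Lemma \ref{zenkashiki}.
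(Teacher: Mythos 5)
Your proposal is correct and follows essentially the same route as the paper: the paper likewise reduces the difference $|u_{n+1}|^{\gam-1}u_{n+1}-|u_n|^{\gam-1}u_n$ via the mean value theorem (your explicit pointwise inequality plus H\"older in Morrey spaces is the same step), then runs the identical chain of Proposition \ref{inclusion}, Proposition \ref{Sobolevtypeineq}, Lemma \ref{smoothingforSalpha} and Proposition \ref{rmkformorreynorm}.\ref{rmkformorreynorm-2}, evaluates the same Beta integral, and invokes the uniform bound \eqref{bound} to extract the factor $\del^{\gam-1}$.
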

\begin{proof}
By using Proposition \ref{inclusion}, Proposition \ref{Sobolevtypeineq},
Lemma \ref{smoothingforSalpha} and Proposition \ref{rmkformorreynorm}.\ref{rmkformorreynorm-2}, we see that
  \begin{align*}
    &\Mnorm{u_{n+2}(\cdot,t)-u_{n+1}(\cdot,t)}{p}{q}
    \\
    \leq C&
    \int_{0}^{t}(t-\tau)^{\al -1}
    \BMnorm{
    S_{\al}(t-\tau)\qty(
    |u_{n+1}|^{\gam-1}u_{n+1}(\cdot,\tau)-|u_{n}|^{\gam-1}u_{n}(\cdot,\tau))
    }{0}{p}{q}{1}\,d\tau
    \\
    \leq C&
    \int_{0}^{t}
    (t-\tau)^{\al -1-\frac{N(\gam -1)\al}{2p}}
    \BMnorm{|u_{n+1}|^{\gam-1}u_{n+1}(\cdot,\tau)-|u_{n}|^{\gam-1}u_{n}(\cdot,\tau)}{0}{p/\gam}{q/\gam}{\infty}
    \,d\tau
    \\
    \leq C& \int_{0}^{t}(t-\tau)^{\al -1-\frac{N(\gam -1)\al}{2p}}
    \Mnorm{|u_{n+1}|^{\gam-1}u_{n+1}(\cdot,\tau)-|u_{n}|^{\gam-1}u_{n}(\cdot,\tau)}{p/\gam}{q/\gam}
    \,d\tau
    \\
    \leq C& \int_{0}^{t}(t-\tau)^{\al -1-\frac{N(\gam -1)\al}{2p}}
    \Mnorm{u_{n+1}(\cdot,\tau)-u_{n}(\cdot,\tau)}{p}{q}\\
    &\qquad\qquad\qquad\qquad\times\qty(
    \Mnorm{u_{n+1}(\cdot,\tau)}{p}{q}^{\gam -1}
    +\Mnorm{u_{n}(\cdot,\tau)}{p}{q}^{\gam -1}
    )
    \,d\tau\\
    \leq C& \int_{0}^{t}(t-\tau)^{\al -1-\frac{N(\gam -1)\al}{2p}}\tau^{\frac
    {s\al\gam}{2}}
    \,d\tau
    \norm{u_{n+1}-u_{n}\mid X_{T}}
    \\
    &\qquad\qquad\qquad\qquad\times
    \qty(
    \norm{u_{n+1}\mid X_{T}}^{\gam -1}
    +\norm{u_{n}\mid X_{T}}^{\gam -1}
    ).
  \end{align*}
  In the fourth inequality, we used the mean value theorem.
  Therefore, we have
  \begin{align*}
    t^{-\frac{s\al}{2}}\norm{u_{n+2}-u_{n+1}\mid M^{p}_{q}}
    \leq C&
    \norm{u_{n+1}-u_{n}\mid X_{T}}
    t^{\al -\frac{N(\gam -1)\al}{2p}+\frac{s\gam\al}{2}-\frac{s\al}{2}}\\
    &\quad\quad\quad\quad\times
    \qty(
    \norm{u_{n+1}\mid X_{T}}^{\gam -1}
    +\norm{u_{n}\mid X_{T}}^{\gam -1}
    )\\
    \leq C&
    \norm{u_{n+1}-u_{n}\mid X_{T}}
    \qty(
    \norm{u_{n+1}\mid X_{T}}^{\gam -1}
    +\norm{u_{n}\mid X_{T}}^{\gam -1}
    )
  \end{align*}
  for all $t<T(\leq 1)$. Then, we take a supremum over $t\in\left] 0,T \right[$ and we get
  \begin{align*}
  \norm{u_{n+2}-u_{n+1}\mid X_{T}}
  \leq C
  \norm{u_{n+1}-u_{n}\mid X_{T}}
  \qty(
  \norm{u_{n+1}\mid X_{T}}^{\gam -1}
  +\norm{u_{n}\mid X_{T}}^{\gam -1}
  )
\end{align*}
for all $T(\leq 1)$.
  Combining a bound \eqref{bound} with this estimate, we obtain
  \begin{align*}
    \norm{u_{n+2}-u_{n+1}\mid X_{T}}
    \leq C \del^{\gam -1}
    \norm{u_{n+1}-u_{n}\mid X_{T}}.
  \end{align*}
\end{proof}
\begin{pfmain}
  Take $\del$ and $T$ so small that
  \begin{align*}
    \norm{u_{n+2}-u_{n+1}\mid X_{T}}\leq \frac{1}{2}
    \norm{u_{n+1}-u_{n}\mid X_{T}}
  \end{align*}
  for $n=0,1,...$.
  We see that $u_n$ converges in $X_T$. Set $u$ as a limit of $u_n$ in $X_T$. Clearly $u$ is a mild solution of problem \eqref{eq}.
  \qed
\end{pfmain}
\subsection{Convergence to initial data and class of mild solutions}
Let us see that the mild solution $u$ we obtained in Theorem \ref{maintheorem} converges to initial data in the weak-$\ast$ topolpgy of $B^{s-N/p}_{\infty,\infty}$.
\begin{lem}\label{BesovSmoothing}
  For all $s\in\R$ and for all $\psi\in B^{s}_{1,1}$,
  \begin{align*}
    \norm{\cF^{-1}\exp(-t^{\al}\theta\abs{\xi}^2)\cF\psi\mid B^{s}_{1,1}}
    \leq
    \norm{\psi\mid B^{s}_{1,1}}.
  \end{align*}
\end{lem}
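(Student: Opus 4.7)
The plan is to exploit two elementary facts: the Gaussian heat kernel $G_\tau(x)\coloneqq\cF^{-1}\exp(-\tau|\xi|^{2})$ satisfies $\norm{G_\tau}_{L^{1}}=1$ for every $\tau>0$ (it is a probability density), and the Fourier multipliers $\cF^{-1}\phi_{(0)}\cF$ and $\cF^{-1}\phi_{j}\cF$ commute with the Fourier multiplier $\cF^{-1}\exp(-\tau|\xi|^{2})\cF$. Setting $\tau=t^{\al}\theta$ and abbreviating $T_\tau\psi\coloneqq\cF^{-1}\exp(-\tau|\xi|^{2})\cF\psi=G_\tau*\psi$, this immediately turns the question into a blockwise $L^{1}$-contraction statement.

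First I would unfold the inhomogeneous Besov norm of $T_\tau\psi$ using the Littlewood--Paley decomposition $\{\phi_{(0)},\phi_j\}_{j\geq1}$ introduced in Section~\ref{sec:FS}:
\begin{align*}
\norm{T_\tau\psi\mid B^{s}_{1,1}}
=\norm{\cF^{-1}\phi_{(0)}\cF(T_\tau\psi)}_{L^{1}}
+\sum_{j=1}^{\infty}2^{sj}\norm{\cF^{-1}\phi_{j}\cF(T_\tau\psi)}_{L^{1}}.
\end{align*}
Next I would commute the multipliers to rewrite each summand as $G_\tau*(\cF^{-1}\phi_{(0)}\cF\psi)$ and $G_\tau*(\cF^{-1}\phi_{j}\cF\psi)$ respectively. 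Then Young's convolution inequality together with $\norm{G_\tau}_{L^{1}}=1$ gives
\begin{align*}
\norm{\cF^{-1}\phi_{j}\cF(T_\tau\psi)}_{L^{1}}
=\norm{G_\tau*\cF^{-1}\phi_{j}\cF\psi}_{L^{1}}
\leq\norm{\cF^{-1}\phi_{j}\cF\psi}_{L^{1}},
\end{align*}
and the analogous bound for the low-frequency block. Summing over $j$ with the weights $2^{sj}$ reproduces the Besov norm of $\psi$, establishing the claim.

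There is no real obstacle; the statement is essentially the translation-invariance plus positivity of the heat kernel dressed up in Besov language. The only things worth double-checking are that the multipliers really do commute (trivial, since they are all Fourier multipliers, hence diagonal in frequency) and that the constant in Young's inequality is exactly $\norm{G_\tau}_{L^{1}}=1$ independently of $\tau=t^{\al}\theta>0$, which is what makes the final estimate sharp (constant $1$) and independent of $t,\theta,\al$. This uniformity is precisely what will later allow one to integrate against $\Phi_\al(\theta)\,d\theta$ to derive the analogous bounds for $P_\al(t)$ and $S_\al(t)$ on $B^{s}_{1,1}$ without losing decay.
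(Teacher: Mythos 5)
Your proposal is correct and follows essentially the same route as the paper: decompose into Littlewood--Paley blocks, commute the multipliers to write each block as a convolution with the Gaussian kernel $\cF^{-1}\exp(-t^{\al}\theta\abs{\xi}^{2})$, and apply Young's inequality using that this kernel has unit $L^{1}$ norm. Nothing is missing.
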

\begin{proof}
  From Young inequality, we see that
  \begin{align*}
    &\norm{\cF^{-1}\exp(-t^{\al}\theta\abs{\xi}^2)\cF\psi\mid B^{s}_{1,1}}
    \\
    &=
    \norm{\cF^{-1}\phi_{(0)}(\xi)\exp(-t^{\al}\theta\abs{\xi}^2)\cF\psi\mid L^{1}}
    +
    \sum_{j=1}^{\infty}2^{sj}
    \norm{\cF^{-1}\phi_{j}(\xi)\exp(-t^{\al}\theta\abs{\xi}^2)\cF\psi\mid L^{1}}
    \\
    &=
    \sum_{j=0}^{\infty}2^{sj}
    \norm{\qty(\cF^{-1}\phi_{j}(\xi)\cF\psi)\ast\qty(\cF^{-1}\exp(-t^{\al}\theta\abs{\xi}^2))\mid L^{1}}
    \\
    &\leq
    \norm{\cF^{-1}\exp(-t^{\al}\theta\abs{\xi}^2)\mid{L^{1}}}
    \sum_{j=0}^{\infty}2^{sj}
    \norm{\cF^{-1}\phi_{j}(\xi)\cF\psi\mid L^{1}}
    =
    \norm{\psi\mid B^{s}_{1,1}}.
  \end{align*}
\end{proof}
\begin{thm}\label{thm:convtoini}
  Let $s$ and $p$ satisfy the same conditions as in Theorem \ref{maintheorem}. In addition, we assume that $s$ satisfies $s>-2/\gam$. Then, for all $\psi\in B^{N/p-s}_{1,1}$,
  \begin{align*}
    \left< u(x,t), \psi \right>\to\left< \mu, \psi \right>\quad\text{as }t\to 0.
  \end{align*}
\end{thm}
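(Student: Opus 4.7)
The strategy is to decompose
\[
\langle u(t) - \mu, \psi\rangle = \langle P_{\al}(t)\mu - \mu, \psi\rangle + \langle I(t), \psi\rangle,
\]
with $I(t) := \int_{0}^{t}(t-\tau)^{\al-1}S_{\al}(t-\tau)[|u|^{\gam-1}u(\tau)]\,d\tau$, and to prove each piece vanishes as $t\downarrow 0$.

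For the linear term, Lemma \ref{wright}-\eqref{wright-a} gives $\int_{0}^{\infty}\Phi_{\al}(\theta)\,d\theta = 1$, so transposing the self-adjoint Gaussian semigroup produces
\[
\langle P_{\al}(t)\mu - \mu, \psi\rangle
= \int_{0}^{\infty}\Phi_{\al}(\theta)\,\langle \mu, e^{t^{\al}\theta\Del}\psi - \psi\rangle\,d\theta.
\]
Lemma \ref{BesovSmoothing}, applied with exponent $N/p - s$, yields the uniform bound $\norm{e^{t^{\al}\theta\Del}\psi\mid B^{N/p-s}_{1,1}}\leq\norm{\psi\mid B^{N/p-s}_{1,1}}$, while strong continuity of the heat semigroup on $B^{N/p-s}_{1,1}$ (valid because both Besov indices are finite) gives pointwise-in-$\theta$ convergence $e^{t^{\al}\theta\Del}\psi\to\psi$. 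Since $\mu\in N^{s}_{p,q,\infty}\subset B^{s-N/p}_{\infty,\infty}$ by Proposition \ref{Sobolevtypeineq}, Besov duality bounds the integrand by $2\norm{\mu\mid B^{s-N/p}_{\infty,\infty}}\norm{\psi\mid B^{N/p-s}_{1,1}}$, and dominated convergence against $\Phi_{\al}(\theta)\,d\theta$ closes this step.

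For the nonlinear term I first treat $\psi\in\cS$ (which, in particular, lies in $B^{N\gam/p}_{1,1}$). Writing $\langle I(t),\psi\rangle$ by Duhamel and moving $S_{\al}$ onto the test function, Besov duality together with the chain $M^{p/\gam}_{q/\gam}\subset N^{0}_{p/\gam,q/\gam,\infty}\subset B^{-N\gam/p}_{\infty,\infty}$ (Propositions \ref{inclusion} and \ref{Sobolevtypeineq}) gives
\[
|\langle I(t),\psi\rangle|
\leq C\int_{0}^{t}(t-\tau)^{\al-1}\norm{u(\tau)\mid M^{p}_{q}}^{\gam}\norm{S_{\al}(t-\tau)\psi\mid B^{N\gam/p}_{1,1}}\,d\tau.
\]
The crucial observation is that Lemma \ref{BesovSmoothing}, combined with $\al\int_{0}^{\infty}\theta\Phi_{\al}(\theta)\,d\theta = \al/\Gam(1+\al)$ from Lemma \ref{wright}-\eqref{wright-b}, yields $\norm{S_{\al}(t-\tau)\psi\mid B^{N\gam/p}_{1,1}}\leq C\norm{\psi\mid B^{N\gam/p}_{1,1}}$ uniformly in $t,\tau$. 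Placing all the regularity on the $\psi$ side this way completely avoids the singular factor that the usual $S_{\al}$-smoothing estimate would produce. Using $\norm{u(\tau)\mid M^{p}_{q}}\leq M\tau^{s\al/2}$ from membership in $X_{T}$, the Duhamel integral evaluates to a constant multiple of $t^{\al(1+s\gam/2)}$, whose exponent is strictly positive precisely because of the extra hypothesis $s>-2/\gam$, while the Beta factor $B(\al, 1+s\gam\al/2)$ is finite by the standing assumption $s>-2/\al\gam$.

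To extend to general $\psi\in B^{N/p-s}_{1,1}$, I invoke density of $\cS$ (both indices being finite) together with a uniform bound $\sup_{0<t\leq T}\norm{I(t)\mid B^{s-N/p}_{\infty,\infty}}<\infty$. This bound is obtained by running Duhamel directly in the Besov--Morrey scale: Proposition \ref{Sobolevtypeineq} places $|u|^{\gam-1}u(\tau)$ into $N^{-N(\gam-1)/p}_{p,q,\infty}$, Lemma \ref{smoothingforSalpha} transports this to $N^{s}_{p,q,\infty}$, and a final application of Proposition \ref{Sobolevtypeineq} embeds into $B^{s-N/p}_{\infty,\infty}$. Two time powers appear, with exponents $\al(1+s\gam/2)$ and $\al\bigl(1-N(\gam-1)/(2p)+s(\gam-1)/2\bigr)$; the first is non-negative thanks to $s>-2/\gam$ and the second thanks to $s\geq N/p-2/(\gam-1)$. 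The main obstacle will be precisely this uniform-bound step, since at the borderline $s=N/p-2/(\gam-1)$ the second exponent collapses to zero and one has to check that the associated Beta integral still converges (which it does, as $p>N(\gam-1)/2$ strictly from $s<0$). Once the uniform bound is in hand, the triangle inequality $|\langle I(t),\psi-\psi_{k}\rangle|\leq C\norm{\psi-\psi_{k}\mid B^{N/p-s}_{1,1}}$ together with the Schwartz case completes the extension.
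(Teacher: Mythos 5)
Your proposal is correct and follows essentially the same route as the paper: the same splitting into $\langle P_{\al}(t)\mu-\mu,\psi\rangle$ plus the Duhamel term, the same transposition onto the test function with Lemma \ref{BesovSmoothing} and dominated convergence for the linear part, and the same Schwartz-approximation-plus-uniform-bound argument for the nonlinear part, with $s>-2/\gam$ entering exactly where the paper uses it (the vanishing of $t^{\al(1+s\gam/2)}$). The only difference is cosmetic: you spell out the uniform bound on the Duhamel term in $B^{s-N/p}_{\infty,\infty}$ that the paper dispatches with ``in the same manner as in the proof of Lemma \ref{zenkashiki}.''
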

\begin{proof}
  We take $\psi\in B^{N/p-s}_{1,1}$. From \eqref{eq:int}, we have
  \begin{align*}
    \abs{\left< u(t), \psi \right>-\left< \mu, \psi \right>}
    \leq
    \abs{\left< P_{\al}(t)\mu-\mu, \psi \right>}
    +
    \abs{\left< \int_{0}^{t}(t-\tau)^{\al-1}S_{\al}(t-\tau)|u|^{\gam-1}u(\tau)\,d\tau, \psi \right>}.
  \end{align*}
  For the first term, by using Lemma \ref{wright}-\eqref{wright-a}, we see that
  \begin{align*}
    \abs{\left< P_{\al}(t)\mu-\mu, \psi \right>}
    &=
    \abs{\left< \int_{0}^{\infty}\Phi_{\al}(\theta)e^{t^{\al}\theta\Del}\mu \,d\theta, \psi \right>-\left<\mu,\psi\right>}
    \\&=
    \abs{\int_{0}^{\infty}\Phi_{\al}(\theta)\left<\cF^{-1}\exp(-t^{\al}\theta\abs{\xi}^2)\cF\mu , \psi \right>d\theta-\left<\mu,\psi\right>\int_{0}^{\infty}\Phi_{\al}(\theta)\,d\theta}
    \\
    &=
    \abs{
    \int_{0}^{\infty}\Phi_{\al}(\theta)\left<\mu ,\cF^{-1}\qty(\exp(-t^{\al}\theta\abs{\xi}^2)-1)\cF\psi\right>\,d\theta}
    \\
    &\leq
    \int_{0}^{\infty}\Phi_{\al}(\theta)\abs{\left< \mu, \cF^{-1}\qty(\exp(-t^{\al}\theta\abs{\xi}^2)-1)\cF\psi \right>}\,d\theta
    \\
    &\leq\norm{\mu\mid B^{s-N/p}_{\infty,\infty}}
    \int_{0}^{\infty}\Phi_{\al}(\theta)\norm{\cF^{-1}\qty(\exp(-t^{\al}\theta\abs{\xi}^2)-1)\cF\psi\mid B^{N/p-s}_{1,1}}\,d\theta.
  \end{align*}
  By using Lemma \ref{BesovSmoothing}, the integrand is estimated as follows.
  \begin{align*}
    &\norm{\cF^{-1}\qty(\exp(-t^{\al}\theta\abs{\xi}^2)-1)\cF\psi\mid B^{N/p-s}_{1,1}}
    \\
    \leq\:&
    \norm{\cF^{-1}\exp(-t^{\al}\theta\abs{\xi}^2)\cF\psi\mid B^{N/p-s}_{1,1}}
    +
    \norm{\psi\mid B^{N/p-s}_{1,1}}
    \leq
    2\norm{\psi\mid B^{N/p-s}_{1,1}}
    <\infty.
  \end{align*}
  In the light of Lebesgue convergence theorem, we conclude that
  \begin{align*}
    &\lim_{t\to 0}
    \abs{\left< P_{\al}(t)\mu-\mu, \psi \right>}
    \\
    \leq\:&
    \norm{\mu\mid B^{s-N/p}_{\infty,\infty}}
    \int_{0}^{\infty}
    \Phi_{\al}(\theta)
    \lim_{t\to 0}
    \norm{\cF^{-1}\qty(\exp(-t^{\al}\theta\abs{\xi}^2)-1)\cF\psi\mid B^{N/p-s}_{1,1}}\,d\theta=0.
  \end{align*}
  (For the last limit, see Lemma 3.3 of \cite{KozonoYamazaki}.)
  For the second term, we take arbitrary $\ep>0$ and take $\tilde{\psi}\in\cS(\R^{N})$ such that $\norm{\psi-\tilde{\psi}\mid B^{N/p-s}_{1,1}}<\ep$.
  Then, we have
  \begin{align}\label{eq:psitilde}
    \abs{\left< \int_{0}^{t}(t-\tau)^{\al-1}S_{\al}(t-\tau)|u|^{\gam-1}u(\tau)\,d\tau, \psi-\tilde{\psi} \right>}
    \leq
    &C\norm{\psi-\tilde{\psi}\mid B^{N/p-s}_{1,1}}\notag
    \\
    <&C\ep
  \end{align}
  in the same manner as in the proof of Lemma \ref{zenkashiki}. Next, we have the estimate
  \begin{align*}
    &\abs{\left< \int_{0}^{t}(t-\tau)^{\al-1}S_{\al}(t-\tau)|u|^{\gam-1}u(\tau)\,d\tau, \tilde{\psi} \right>}
    \\
    &\leq
    \int_{0}^{t}(t-\tau)^{\al-1}\abs{\left< S_{\al}(t-\tau)|u|^{\gam-1}u(\tau), \tilde{\psi} \right>}\,d\tau
    \\
    &\leq
    \int_{0}^{t}(t-\tau)^{\al-1}\al \int_{0}^{\infty}
    \theta\Phi_{\al}(\theta)
    \abs{\left<e^{(t-\tau)^{\al}\theta\Del}|u|^{\gam-1}u(\tau), \tilde{\psi} \right>}\,d\theta d\tau
    \\
    &=
    \al \int_{0}^{\infty}
    \theta\Phi_{\al}(\theta)
    \int_{0}^{t}(t-\tau)^{\al-1}
    \abs{\left< |u|^{\gam-1}u(\tau), \cF^{-1}\exp\qty(-(t-\tau)^{\al}\theta\abs{\xi}^2)\cF\tilde{\psi} \right>}
    \,d\tau d\theta.
  \end{align*}
  Note that there is a relation
  \begin{align*}
    M^{p/\gam}_{q/\gam}\subset N^{0}_{p/\gam,q/\gam,\infty}\subset B^{-N\gam/p}_{\infty,\infty}.
  \end{align*}
  Combining this with Lemma \ref{BesovSmoothing}, we obtain
  \begin{align}\label{eq:tzeropsi}
    &\al \int_{0}^{\infty}
    \theta\Phi_{\al}(\theta)
    \int_{0}^{t}(t-\tau)^{\al-1}
    \abs{\left< |u|^{\gam-1}u(\tau), \cF^{-1}\exp\qty(-(t-\tau)^{\al}\theta\abs{\xi}^2)\cF\tilde{\psi} \right>}
    \,d\tau d\theta\notag
    \\
    &\leq
    \al \int_{0}^{\infty}
    \theta\Phi_{\al}(\theta)
    \int_{0}^{t}(t-\tau)^{\al-1}
    \norm{|u|^{\gam-1}u(\cdot,\tau)\mid B^{-N\gam/p}_{\infty,\infty}}
    \notag
    \\
    &\qquad\qquad\qquad\qquad\qquad\times
    \norm{\cF^{-1}\exp\qty(-(t-\tau)^{\al}\theta\abs{\xi}^2)\cF\tilde{\psi}\mid B^{N\gam/p}_{1,1}}
    \,d\tau d\theta\notag
    \\
    &\leq
    \al\norm{\tilde{\psi}\mid B^{N\gam/p}_{1,1}}\int_{0}^{\infty}
    \theta\Phi_{\al}(\theta)
    \int_{0}^{t}(t-\tau)^{\al-1}
    \norm{|u|^{\gam-1}u(\cdot,\tau)\mid M^{p/\gam}_{q/\gam}}
    \,d\tau d\theta\notag
    \\
    &\leq
    \al\norm{\tilde{\psi}\mid B^{N\gam/p}_{1,1}}
    \norm{u\mid X_{T}}^{\gam}
    \int_{0}^{\infty}
    \theta\Phi_{\al}(\theta)
    \int_{0}^{t}(t-\tau)^{\al-1}\tau^{s\al\gam/2}
    \,d\tau d\theta\notag
    \\
    &=
    \al\norm{\tilde{\psi}\mid B^{N\gam/p}_{1,1}}
    \norm{u\mid X_{T}}^{\gam}
    \int_{0}^{t}(t-\tau)^{\al-1}\tau^{s\al\gam/2}
    \,d\tau
    \xrightarrow{t\rightarrow 0} 0.
  \end{align}
  Gathering \eqref{eq:psitilde} and \eqref{eq:tzeropsi}, we get the conclusion.
\end{proof}
\begin{rmk}
  With respect to the estimate of the first term, the additional assumption $s>-2/\gam$ is unnecessary.
\end{rmk}


Next, we verify that the mild soluton $u$ we obtained in Theorem \ref{maintheorem} becomes a strong solution as a Besov--Morrey space valued function.
\begin{thm}\label{strongsolution}
  The solution $u$ to problem \eqref{eq} is continuous as a Besov--Morrey space valued function. That is to say,
  \begin{align*}
    u\in C\qty(\:\left]0,T\right];N^{s}_{p,q,\infty}(\R^N)).
  \end{align*}
  Here, $s,\:p,\:q$ and $\mu$ satisfies the assumptions of Theorem \ref{maintheorem}.
\end{thm}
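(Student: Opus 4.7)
The plan is to split $u(t)=P_{\al}(t)\mu+F(t)$ where $F(t):=\int_{0}^{t}(t-\tau)^{\al-1}S_{\al}(t-\tau)|u|^{\gam-1}u(\tau)\,d\tau$, and prove continuity of $t\mapsto P_{\al}(t)\mu$ and $t\mapsto F(t)$ from $(0,T]$ into $N^{s}_{p,q,\infty}$ separately. Both maps take values in $M^{p}_{q}$, and the chain $M^{p}_{q}\hookrightarrow N^{0}_{p,q,\infty}\hookrightarrow N^{s}_{p,q,\infty}$ (the first inclusion by Proposition \ref{inclusion}, the second because $2^{sj}\leq 1$ whenever $s\leq 0$ and $j\geq 0$) reduces matters to continuity in the weaker norm $\norm{\,\cdot\mid N^{s}_{p,q,\infty}}$. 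For the linear piece, I would fix $t_{0}\in(0,T]$ and for $|h|<t_{0}/2$ perform the change of variables $\theta=(t_{0}/(t_{0}+h))^{\al}\eta$ in the defining integral to obtain
$$P_{\al}(t_{0}+h)\mu=\Big(\frac{t_{0}}{t_{0}+h}\Big)^{\al}\int_{0}^{\infty}\Phi_{\al}\!\Big(\Big(\frac{t_{0}}{t_{0}+h}\Big)^{\al}\eta\Big)\,e^{t_{0}^{\al}\eta\Del}\mu\,d\eta.$$
The integrand converges pointwise in $\eta$ to $\Phi_{\al}(\eta)e^{t_{0}^{\al}\eta\Del}\mu$ as $h\to 0$ by continuity of $\Phi_{\al}$, and its $N^{s}_{p,q,\infty}$-norm is dominated in $\eta$ uniformly in small $|h|$ by a constant multiple of $\Phi_{\al}(c\eta)\norm{\mu\mid N^{s}_{p,q,\infty}}$, via Lemma \ref{smoothingforGauss} (with $\sigma=s$) together with the rapid decay of $\Phi_{\al}$. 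Dominated convergence in $N^{s}_{p,q,\infty}$ then gives continuity of $P_{\al}(\cdot)\mu$ at $t_{0}$.

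For $F(t)$, I would fix $t_{0}\in(0,T]$ and for small $h>0$ decompose
\begin{align*}
F(t_{0}+h)-F(t_{0})
=&\int_{t_{0}}^{t_{0}+h}(t_{0}+h-\tau)^{\al-1}S_{\al}(t_{0}+h-\tau)|u|^{\gam-1}u(\tau)\,d\tau\\
&+\int_{0}^{t_{0}}\bigl[(t_{0}+h-\tau)^{\al-1}-(t_{0}-\tau)^{\al-1}\bigr]S_{\al}(t_{0}+h-\tau)|u|^{\gam-1}u(\tau)\,d\tau\\
&+\int_{0}^{t_{0}}(t_{0}-\tau)^{\al-1}\bigl[S_{\al}(t_{0}+h-\tau)-S_{\al}(t_{0}-\tau)\bigr]|u|^{\gam-1}u(\tau)\,d\tau.
\end{align*}
I would bound each integrand in $N^{s}_{p,q,\infty}$ by the same chain as in the proof of Lemma \ref{zenkashiki}, namely $M^{p}_{q}\hookrightarrow N^{0}_{p,q,1}$ (Proposition \ref{inclusion}), the Sobolev embedding (Proposition \ref{Sobolevtypeineq}), Lemma \ref{smoothingforSalpha}, and Proposition \ref{rmkformorreynorm}.\ref{rmkformorreynorm-2}, combined with the $X_{T}$-bound $\norm{u(\tau)\mid M^{p}_{q}}\leq M\tau^{s\al/2}$. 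The first two pieces vanish as $h\to 0$ by absolute continuity of the Lebesgue integral applied to the resulting integrable majorants.

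The third piece is the crux and requires strong continuity of $S_{\al}(\cdot)$, applied pointwise in $\tau$ to $|u|^{\gam-1}u(\tau)$, in an appropriate Besov--Morrey norm. I would establish this by repeating the change-of-variables-plus-dominated-convergence argument used above for $P_{\al}$: the additional factor $\al\theta$ in the integrand defining $S_{\al}$ is harmless because $\int_{0}^{\infty}\theta\,\Phi_{\al}(\theta)\,d\theta<\infty$ by Lemma \ref{wright}.\ref{wright-b}. The main obstacle is then to secure an $L^{1}_{\tau}(0,t_{0})$ majorant uniform in small $h$, against the competing singularities from $(t_{0}-\tau)^{\al-1}$ and from the norm blow-up of $S_{\al}(t_{0}-\tau)$ as $\tau\to t_{0}$. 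This is precisely where the assumptions of Theorem \ref{maintheorem} are used: $s\geq N/p-2/(\gam-1)$ combined with $s<0$ forces $p>N(\gam-1)/2$ (controlling the singularity at $\tau=t_{0}$), and $s>-2/(\al\gam)$ yields $s\al\gam/2>-1$ (controlling the singularity at $\tau=0$), so that the bounding integral $\int_{0}^{t_{0}}(t_{0}-\tau)^{\al-1-N(\gam-1)\al/(2p)}\tau^{s\al\gam/2}\,d\tau$ is finite. Dominated convergence in $\tau$ then completes the proof.
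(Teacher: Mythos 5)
Your architecture is essentially the paper's: split $u$ into $P_{\al}(t)\mu$ plus the Duhamel term, decompose the Duhamel difference into a new-interval piece, a kernel-difference piece and an operator-difference piece, obtain strong continuity of $P_{\al}$ and $S_{\al}$ by dominated convergence over the Wright-function integral, and close with the integrable majorant furnished by Lemma \ref{smoothingforSalpha} and the $X_{T}$-bound. Two of your choices are legitimate variants worth noting: the substitution $\theta=(t_{0}/(t_{0}+h))^{\al}\eta$, which freezes the semigroup and puts all $h$-dependence into $\Phi_{\al}$, is a clean alternative to the paper's Lemma \ref{lemmaforstrongsol} and Remark \ref{rmkforstrongsol} (which instead prove pointwise-in-$\theta$ convergence of $e^{(t+h)^{\al}\theta\Del}\mu-e^{t^{\al}\theta\Del}\mu$ by passing through the $L^{\infty}$ norm); and your reduction $M^{p}_{q}\hookrightarrow N^{0}_{p,q,\infty}\hookrightarrow N^{s}_{p,q,\infty}$ lets you estimate in the Morrey norm exactly as in Lemma \ref{zenkashiki}, which is slightly simpler than the paper's estimates carried out at regularity $s$.

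The genuine gap is that you prove only \emph{right} continuity. Your treatment of $F$ is written for $h>0$, and it does not extend to $h<0$ as stated: for $h<0$ the kernel $(t_{0}+h-\tau)^{\al-1}$ in your second piece is not even defined on $\tau\in\left]t_{0}+h,t_{0}\right[$, so the decomposition itself must be rewritten with upper limit $t_{0}+h$. More importantly, once that is done, the dominated-convergence strategy you rely on breaks down for the kernel-difference and operator-difference pieces: the natural bound is $(t_{0}-|h|-\tau)^{\al-1}$ times the smoothing factors, whose singularity sits at the \emph{moving} endpoint $\tau=t_{0}-|h|$, and there is no $h$-independent integrable majorant on a fixed interval (at $\tau=t_{0}-2|h|$ the integrand is of order $|h|^{\al-1}\to\infty$). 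This is precisely where the paper's Step 2 has to do extra work: the terms $J_{2}$ and $J_{32}$ are handled by splitting off an $\ep$-neighbourhood of the endpoint, estimating its contribution by $C\ep^{s\al\gam/2+1}$ (resp. $C(t-\ep)^{s\al\gam/2}\ep^{\al-\al s/2-N(\gam-1)\al/2p}$) uniformly in $h$, and then letting $\ep\to 0$ after $h\to 0$. Since the theorem asserts continuity on $\left]0,T\right]$ — at $t_{0}=T$ this is purely left continuity — this missing half is not a formality, and your proposal needs the additional $\ep$-splitting argument (or an equivalent device) to be complete.
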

To prove this, we use the following lemma.
\begin{lem}\label{lemmaforstrongsol}
  The map $t\mapsto P_{\al}(t)\mu$ is continuous in Besov--Morrey norm. Namely,
  \begin{align*}
    \lim_{h\rightarrow 0}\BMnorm{P_{\al}(t+h)\mu-P_{\al}(t)\mu}{s}{p}{q}{\infty}
    =0
  \end{align*}
  holds for all $t>0$. Here, $s,\:p,\:q$ and $\mu$ satisfies the assumptions of Theorem \ref{maintheorem}.
\end{lem}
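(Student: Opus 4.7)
I would write the difference as an integral against the Wright density,
\begin{align*}
  P_\alpha(t+h)\mu - P_\alpha(t)\mu = \int_0^\infty \Phi_\alpha(\theta)\bigl(e^{(t+h)^\alpha\theta\Delta} - e^{t^\alpha\theta\Delta}\bigr)\mu\, d\theta,
\end{align*}
apply Minkowski's inequality in $N^{s}_{p,q,\infty}$ (which transfers through both the $M^p_q$ norm on the low-frequency block and the $\sup_{j}$ over frequency annuli), and reduce the statement to dominated convergence on the $\theta$-integral.

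For the domination, bounding each term in $(e^{(t+h)^\alpha\theta\Delta} - e^{t^\alpha\theta\Delta})\mu$ separately via Lemma \ref{smoothingforGauss} applied with $\sigma = s$ yields an integrand majorized by a constant multiple of $\Phi_\alpha(\theta)\BMnorm{\mu}{s}{p}{q}{\infty}$, which is integrable since $\int_0^\infty\Phi_\alpha(\theta)\,d\theta = 1$ by Lemma \ref{wright}-\eqref{wright-a}.

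The essential step is pointwise convergence of the integrand as $h \to 0$. Setting $\delta := ((t+h)^\alpha - t^\alpha)\theta$ and using the semigroup identity, I would reduce matters (for $h>0$; the case $h<0$ is symmetric with $t$ and $t+h$ interchanged) to showing that $(e^{\delta\Delta} - I)f \to 0$ in $N^{s}_{p,q,\infty}$, where $f := e^{t^\alpha\theta\Delta}\mu$. Crucially, because $t^\alpha\theta > 0$, the second assertion of Lemma \ref{smoothingforGauss} places $f$ in the strictly better space $N^{\sigma}_{p,q,1}$ for any $\sigma \in (s,s+2]$. The representation $(e^{\delta\Delta} - I)f = \int_0^\delta \Delta e^{\tau\Delta}f\,d\tau$ combined with one more application of Lemma \ref{smoothingforGauss} (moving from $N^{\sigma-2}$ up to $N^{s}$) then produces
\begin{align*}
  \BMnorm{(e^{\delta\Delta}-I)f}{s}{p}{q}{\infty} \leq C\,\BMnorm{f}{\sigma}{p}{q}{\infty}\int_0^\delta\bigl(1+\tau^{(\sigma-s-2)/2}\bigr)\,d\tau,
\end{align*}
and the integrated singularity is summable because $\sigma - s > 0$ forces $(\sigma-s-2)/2 > -1$, so the right-hand side vanishes as $\delta\to 0^{+}$.

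The main obstacle I anticipate is precisely this pointwise step: the Gauss semigroup is \emph{not} strongly continuous at $\tau=0$ on $N^{s}_{p,q,\infty}$ (the third index $\infty$ obstructs density of smooth functions). The device that circumvents this is that one only needs continuity at a \emph{positive} time, so absorbing a portion of the semigroup into $\mu$ first transfers the perturbation problem onto a Besov--Morrey space with finite third index, where the heat-equation representation gives a quantitative rate.
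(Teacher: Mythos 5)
Your proposal is correct and shares the paper's outer skeleton (Minkowski's inequality to pull the norm inside the $\theta$-integral, the $h$-uniform bound $\BMnorm{e^{\tau\Del}\mu}{s}{p}{q}{\infty}\leq C\BMnorm{\mu}{s}{p}{q}{\infty}$ from Lemma \ref{smoothingforGauss} with $\sigma=s$ as the dominating function, and dominated convergence), but it diverges at the essential pointwise step. The paper exploits $s<0$ and the chain of embeddings $L^{\infty}\subset M^{p}_{q}\subset N^{0}_{p,q,\infty}\subset N^{s}_{p,q,\infty}$ to reduce everything to
\begin{align*}
  \inftynorm{e^{(t+h)^{\al}\theta\Del}\mu-e^{t^{\al}\theta\Del}\mu}\xrightarrow{h\to 0}0,
\end{align*}
i.e.\ it leaves the Besov--Morrey scale entirely and falls back on continuity of the heat flow in the uniform norm at the positive time $t^{\al}\theta$ (this is asserted rather than proved there; it rests on $e^{t^{\al}\theta\Del}\mu$ being bounded and uniformly continuous, via Proposition \ref{Sobolevtypeineq} and smoothing). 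You instead stay inside the scale: factor out $f=e^{t^{\al}\theta\Del}\mu$, use the second half of Lemma \ref{smoothingforGauss} to gain regularity $\sigma\in\left]s,s+2\right]$, and convert $e^{\del\Del}-I$ into $\int_{0}^{\del}\Del e^{\tau\Del}\,d\tau$ to obtain a quantitative rate $O(\del^{(\sigma-s)/2}+\del)$. Your route is more robust --- it uses neither $s<0$ nor the embedding of $L^{\infty}$ into local Morrey spaces, and it yields an explicit modulus of continuity --- at the price of two ingredients the paper never states and which you should record: the boundedness of $\Del\colon N^{\sigma}_{p,q,\infty}\to N^{\sigma-2}_{p,q,\infty}$ (a routine multiplier estimate on the dyadic blocks, including the low-frequency block $\phi_{(0)}$), and a word justifying the identity $(e^{\del\Del}-I)f=\int_{0}^{\del}\Del e^{\tau\Del}f\,d\tau$ in this distributional setting (e.g.\ test against Schwartz functions, then take norms using the absolute convergence you establish). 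Your closing remark correctly identifies the reason neither argument can work at $t=0$: the failure of strong continuity on $N^{s}_{p,q,\infty}$, which the paper also flags in the remark following this lemma.
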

\begin{proof}
  The Minkowski inequality gives
  \begin{align*}
    \BMnorm{P_{\al}(t+h)\mu-P_{\al}(t)\mu}{s}{p}{q}{\infty}
    \leq
    \int_{0}^{\infty}\Phi_{\al}(\theta)
    \BMnorm{e^{(t+h)^{\al}\theta\Del}\mu-e^{t^{\al}\theta\Del}\mu}
    {s}{p}{q}{\infty}
    \,d\theta.
  \end{align*}
  By using Lemma \ref{smoothingforGauss}, we see
  \begin{align*}
    &\BMnorm{e^{(t+h)^{\al}\theta\Del}\mu-e^{t^{\al}\theta\Del}\mu}
    {s}{p}{q}{\infty}
    \leq
    \BMnorm{e^{(t+h)^{\al}\theta\Del}\mu}
    {s}{p}{q}{\infty}
    \:+\:
    \BMnorm{e^{t^{\al}\theta\Del}\mu}
    {s}{p}{q}{\infty}
    \\
    &\leq
    C\BMnorm{\mu}
    {s}{p}{q}{\infty}
    \:+\:
    C\BMnorm{\mu}{s}{p}{q}{\infty}
    =C\BMnorm{\mu}{s}{p}{q}{\infty}.
  \end{align*}
  By this estimate, Lemma \ref{wright}-\eqref{wright-a} and dominated convergence theorem, we get
  \begin{align*}
    \lim_{h\rightarrow 0}\BMnorm{P_{\al}(t+h)u-P_{\al}(t)u}{s}{p}{q}{\infty}
    &\leq
    \int_{0}^{\infty}\Phi_{\al}(\theta)
    \lim_{h\rightarrow 0}
    \BMnorm{e^{(t+h)^{\al}\theta\Del}\mu-e^{t^{\al}\theta\Del}\mu}
    {s}{p}{q}{\infty}
    \,d\theta.
  \end{align*}
  Here, for $t>0$ and $\theta>0$, we calculate
  \begin{align*}
    \BMnorm{e^{(t+h)^{\al}\theta\Del}\mu-e^{t^{\al}\theta\Del}\mu}
    {s}{p}{q}{\infty}
    &\leq
    \BMnorm{e^{(t+h)^{\al}\theta\Del}\mu-e^{t^{\al}\theta\Del}\mu}
    {0}{p}{q}{\infty}
    \\
    \leq
    \Mnorm{e^{(t+h)^{\al}\theta\Del}\mu-e^{t^{\al}\theta\Del}\mu}
    {p}{q}
    &\leq
    \inftynorm{e^{(t+h)^{\al}\theta\Del}\mu-e^{t^{\al}\theta\Del}\mu}
    \xrightarrow{h\rightarrow 0} 0.
  \end{align*}
  This yields the conclusion of the lemma.
\end{proof}
\begin{rmk}
  The closed operator $-\Del$ is not densely defined on $N^{s}_{p,q,r}$ when $p\neq q$ or $r=\infty$. Thus, we can't expect that $P_{\al}(t)\mu$ is continuous at $t=0$.
\end{rmk}
\begin{rmk}\label{rmkforstrongsol}
  By similar arguments, we see that
  \begin{align*}
    \lim_{h\rightarrow 0}
    \BMnorm{\qty(S_{\al}(t+h-\tau)-S_{\al}(t-\tau))|u|^{\gam-1}u(\cdot,\tau)}{s}{p}{q}{\infty}
    =0
  \end{align*}
  for all $t>0$ and $\tau\in\left]0,t\right[$.
\end{rmk}
\begin{pfstrong}
  {\bf Step 1.}
  First, we show the right continuity of $u(t)$. Take $h>0$ and calculate as follows.
  \begin{align*}
    0&\leq\BMnorm{u(\cdot,t+h)-u(\cdot,t)}{s}{p}{q}{\infty}
    \\
    &\leq\BMnorm{P_{\al}(t+h)\mu-P_{\al}(t)\mu}{s}{p}{q}{\infty}
    \\
    &\quad+\int_{t}^{t+h}
    (t+h-\tau)^{\al-1}
    \BMnorm{S_{\al}(t+h-\tau)|u|^{\gam-1}u(\cdot,\tau)}{s}{p}{q}{\infty}
    \,d\tau
    \\
    &\quad+
    \int_{0}^{t}
    \abs{(t+h-\tau)^{\al-1}-(t-\tau)^{\al-1}}\times
    \BMnorm{S_{\al}(t-\tau)|u|^{\gam-1}u(\cdot,\tau)}{s}{p}{q}{\infty}
    \,d\tau
    \\
    &\quad+
    \int_{0}^{t}
    (t+h-\tau)^{\al-1}
    \BMnorm{
    \left(S_{\al}(t+h-\tau)-S_{\al}(t-\tau)\right)|u|^{\gam-1}u(\cdot,\tau)
    }{s}{p}{q}{\infty}
    \,d\tau
    \\
    &\eqqcolon
    I_{0}+I_{1}+I_{2}+I_{3}.
  \end{align*}
  From Lemma \ref{lemmaforstrongsol}, $I_{0}\xrightarrow{h\rightarrow 0}0$. Next, by using Proposition \ref{inclusion}, Proposition \ref{Sobolevtypeineq},
  Lemma \ref{smoothingforSalpha} and Proposition \ref{rmkformorreynorm}.\ref{rmkformorreynorm-2}, we see that
  \begin{align*}
    I_{1}
    &\leq C
    \int_{t}^{t+h}
    (t+h-\tau)^{\al-1}
    \BMnorm{S_{\al}(t+h-\tau)|u|^{\gam-1}u(\cdot,\tau)}
    {s}{p}{q}{1}
    \,d\tau
    \\
    &\leq C
    \int_{t}^{t+h}
    (t+h-\tau)^{\al-1}
    \BMnorm{S_{\al}(t+h-\tau)|u|^{\gam-1}u(\cdot,\tau)}
    {s+\frac{N(\gam -1)}{p}}{p/\gam}{q/\gam}{1}
    \,d\tau
    \\
    &\leq C
    \int_{t}^{t+h}
    (t+h-\tau)^{\al-1}
    \qty(1+(t+h-\tau))^{-\al s/2 -N(\gam-1)\al/2p}
    \BMnorm{|u|^{\gam-1}u(\cdot,\tau)}
    {0}{p/\gam}{q/\gam}{\infty}
    \,d\tau
    \\
    &\leq C
    \int_{t}^{t+h}
    (t+h-\tau)^{\al-1-\al s/2 -N(\gam-1)\al/2p}\:
    \tau^{s\al\gam/2}\tau^{-s\al\gam/2}
    \Mnorm{u(\cdot,\tau)}{p}{q}^{\gam}
    \,d\tau
    \\
    &\leq C
    t^{s\al\gam/2}\xtnorm{u}^{\gam}
    \int_{t}^{t+h}
    (t+h-\tau)^{\al-1-\al s/2 -N(\gam-1)\al/2p}
    \,d\tau
    \\
    &= C
    t^{s\al\gam/2}\xtnorm{u}^{\gam}\cdot
    h^{\al-\al s/2 -N(\gam-1)\al/2p}
    \xrightarrow{h\rightarrow 0}0.
  \end{align*}
  We note that the relation $\al-\al s/2-N(\gam-1)\al/2p>0$ holds from assumptions $s<0$ and
   $s\geq N/p-2/(\gam -1)$.
  For $I_{2}$, we see that the integrand is estimated as follows.
  \begin{align*}
    &|(t+h-\tau)^{\al-1}-(t-\tau)^{\al-1}|
    \BMnorm{S_{\al}(t-\tau)|u|^{\gam-1}u(\cdot,\tau)}{s}{p}{q}{\infty}
    \\
    &\leq C
    \left((t+h-\tau)^{\al-1}+(t-\tau)^{\al-1}\right)
    (t-\tau)^{-\al s/2 -N(\gam-1)\al/2p}\tau^{\frac{s\al\gam}{2}}\norm{u\mid X_{T}}^{\gam}
    \\
    &\leq C
    (t-\tau)^{\al-1-\al s/2 -N(\gam-1)\al/2p}\tau^{\frac{s\al\gam}{2}}.
  \end{align*}
  Thus, the Lebesgue convergence theorem yields
  \begin{align*}
    \lim_{h\to 0}I_{2}
    =\int_{0}^{t}\lim_{h\to 0}
    |(t+h-\tau)^{\al-1}-(t-\tau)^{\al-1}|
    \BMnorm{S_{\al}(t-\tau)|u|^{\gam-1}u(\cdot,\tau)}{s}{p}{q}{\infty}
    \,d\tau
    =0.
  \end{align*}
To estimate $I_{3}$, we see that
\begin{align*}
  &(t+h-\tau)^{\al-1}
  \BMnorm{\left(
  S_{\al}(t+h-\tau)-S_{\al}(t-\tau)\right)|u|^{\gam-1}u(\cdot,\tau)
  }{s}{p}{q}{\infty}
  \\
  &\leq C
  (t-\tau)^{\al-1}
  \Bigl(
  \BMnorm{
  S_{\al}(t+h-\tau)|u|^{\gam-1}u(\cdot,\tau)
  }{s}{p}{q}{\infty}
  \\
  &\qquad\qquad\qquad\qquad\qquad
  +
  \BMnorm{
  S_{\al}(t-\tau)|u|^{\gam-1}u(\cdot,\tau)
  }{s}{p}{q}{\infty}
  \Bigr)
  \\
  &\leq C
  (t-\tau)^{\al-1}
  \left(
  (t+h-\tau)^{-\al s/2 -N(\gam-1)\al/2p}+(t-\tau)^{-\al s/2 -N(\gam-1)\al/2p}
  \right)
  \tau^{\frac{s\al\gam}{2}}\norm{u\mid X_{T}}^{\gam}
  \\
  &\leq C
  (t-\tau)^{\al-1-\al s/2 -N(\gam-1)\al/2p}
  \tau^{\frac{s\al\gam}{2}}.
\end{align*}
Finally, the Lebesgue convergence theorem and Remark \ref{rmkforstrongsol} give
\begin{align*}
  \lim_{h\to 0}I_{3}
  &=
  \int_{0}^{t}\lim_{h\to 0}
  (t+h-\tau)^{\al-1}
  \BMnorm{\left(
  S_{\al}(t+h-\tau)-S_{\al}(t-\tau)\right)|u|^{\gam-1}u(\cdot,\tau)
  }{s}{p}{q}{\infty}
  \,d\tau
  \\
  &=0.
\end{align*}
{\bf Step 2.} Next, we check the left continuity of $u(t)$. Take $h>0$ and calculate as follows.
\begin{align*}
  0&\leq\BMnorm{u(\cdot,t)-u(\cdot,t-h)}{s}{p}{q}{\infty}
  \\
  &\leq\BMnorm{P_{\al}(t)\mu-P_{\al}(t-h)\mu}{s}{p}{q}{\infty}
  \\
  &\quad+\int_{t-h}^{t}
  (t-\tau)^{\al-1}
  \BMnorm{S_{\al}(t-\tau)|u|^{\gam-1}u(\cdot,\tau)}{s}{p}{q}{\infty}
  \,d\tau
  \\
  &\quad+
  \int_{0}^{t-h}
  \abs{(t-h-\tau)^{\al-1}-(t-\tau)^{\al-1}}\times
  \BMnorm{S_{\al}(t-\tau)|u|^{\gam-1}u(\cdot,\tau)}{s}{p}{q}{\infty}
  \,d\tau
  \\
  &\quad+
  \int_{0}^{t-h}
  (t-h-\tau)^{\al-1}
  \BMnorm{
  \qty(S_{\al}(t-h-\tau)-S_{\al}(t-\tau))|u|^{\gam-1}u(\cdot,\tau)
  }{s}{p}{q}{\infty}
  \,d\tau
  \\
  &\eqqcolon
  J_{0}+J_{1}+J_{2}+J_{3}.
\end{align*}
From Lemma \ref{lemmaforstrongsol}, $J_{0}\xrightarrow{h\rightarrow 0}0$ for $t>0$.
Mimicking the estimate of $I_{1}$, we get
\begin{align*}
  J_{1}
  &\leq
  C(t-h)^{s\al\gam/2}\norm{u\mid X_{T}}^{\gam}\int_{t-h}^{t}
  (t-\tau)^{\al-1-\al s/2-N(\gam-1)\al/2p}
  \,d\tau
  \\
  &\leq C
  (t-h)^{s\al\gam/2}
  h^{\al-\al s/2 -N(\gam-1)\al/2p}
  \xrightarrow{h\rightarrow 0}0.
\end{align*}
In order to estimate $J_{2}$, we take arbitrary $\ep\in\left]0,t-h\right[$ and we calculate as follows.
\begin{align*}
  J_{2}
  &\leq
  \int_{0}^{t-h}
  \qty((t-h-\tau)^{\al-1}-(t-\tau)^{\al-1})
  \BMnorm{S_{\al}(t-\tau)|u|^{\gam-1}u(\cdot,\tau)}{s}{p}{q}{\infty}
  \,d\tau
  \\
  &\leq
  C\int_{\ep}^{t-h}
  \qty((t-h-\tau)^{\al-1}-(t-\tau)^{\al-1})
  (t-\tau)^{-\al s/2-N(\gam-1)\al/2p}\tau^{s\al\gam/2}
  \,d\tau
  \\
  &\quad+C\int_{0}^{\ep}
  \qty((t-h-\tau)^{\al-1}-(t-\tau)^{\al-1})
  (t-\tau)^{-\al s/2-N(\gam-1)\al/2p}\tau^{s\al\gam/2}
  \,d\tau
  \\
  &\leq C\ep^{s\al\gam/2}\int_{0}^{t-h}
  \qty((t-h-\tau)^{\al-\al s/2-N(\gam-1)\al/2p-1}-(t-\tau)^{\al-\al s/2-N(\gam-1)\al/2p-1})
  \,d\tau
  \\
  &\quad+
  C\int_{0}^{\ep}\tau^{s\al\gam/2}
  \,d\tau
  \\
  &=
  C\ep^{s\al\gam/2}\qty{
  h^{\al-\al s/2-N(\gam-1)\al/2p}-t^{\al-\al s/2-N(\gam-1)\al/2p}+(t-h)^{\al-\al s/2-N(\gam-1)\al/2p}
  }
  \\
  &\quad+C\ep^{s\al\gam/2 +1}.
\end{align*}
This calculation gives
\begin{align*}
  0\leq
  \limsup_{h\rightarrow 0} J_{2}
  \leq
  C\ep^{s\al\gam/2 +1}.
\end{align*}
By arbitrariness of $\ep>0$, we get $J_{2}\xrightarrow{h\rightarrow 0}0$.
To see the convergence of $J_{3}$, we take arbitrary $\ep\in\left]0,t\right[$. Then, we take arbitrary $h\in\left]0,\ep\right[$ and decompose $J_{3}$ as
\begin{align*}
  J_{3}
  &=\int_{0}^{t-\ep}
  (t-h-\tau)^{\al-1}
  \BMnorm{
  \qty(S_{\al}(t-h-\tau)-S_{\al}(t-\tau))|u|^{\gam-1}u(\cdot,\tau)
  }{s}{p}{q}{\infty}
  \,d\tau
  \\
  &\quad+\int_{t-\ep}^{t-h}
  (t-h-\tau)^{\al-1}
  \BMnorm{
  \qty(S_{\al}(t-h-\tau)-S_{\al}(t-\tau))|u|^{\gam-1}u(\cdot,\tau)
  }{s}{p}{q}{\infty}
  \,d\tau
  \\
  &\eqqcolon J_{31}+J_{32}.
\end{align*}
By the same reasoning as in the convergence of $I_{3}$, we get $J_{31}\xrightarrow{h\rightarrow 0}0$.
For $J_{32}$,
\begin{align*}
  J_{32}&\leq
  \int_{t-\ep}^{t-h}
  (t-h-\tau)^{\al-1}
  \biggl(
  \norm{S_{\al}(t-h-\tau)|u|^{\gam-1}u(\cdot,\tau)\mid N^{s}_{p,q,\infty}}
  \\
  &\qquad\qquad\qquad\qquad\qquad\qquad+
  \norm{S_{\al}(t-\tau)|u|^{\gam-1}u(\cdot,\tau)\mid N^{s}_{p,q,\infty}}
  \biggr)\,d\tau
  \\
  &\leq C
  \int_{t-\ep}^{t-h}
  (t-h-\tau)^{\al-1}
  \biggl(
  (t-h-\tau)^{-\al s/2-N(\gam-1)\al/2p}\tau^{s\al\gam/2}
  \\
  &\qquad\qquad\qquad\qquad\qquad\qquad+
  (t-\tau)^{-\al s/2-N(\gam-1)\al/2p}\tau^{s\al\gam/2}
  \biggr)\,d\tau
  \\
  &\leq 2C(t-\ep)^{s\al\gam/2}
  \int_{t-\ep}^{t-h}
  (t-h-\tau)^{\al-1-\al s/2-N(\gam-1)\al/2p}
  \,d\tau
  \\
  &=
  C(t-\ep)^{s\al\gam/2}
  (\ep-h)^{\al-\al s/2-N(\gam-1)\al/2p}.
\end{align*}
Therefore,
\begin{align*}
  0\leq
  \limsup_{h\rightarrow 0} J_{32}
  \leq
  C(t-\ep)^{s\al\gam/2}
  \ep^{\al-\al s/2-N(\gam-1)\al/2p}.
\end{align*}
By arbitrariness of $\ep>0$, we get $J_{32}\xrightarrow{h\rightarrow 0}0$
and this completes the proof.
  \qed
\end{pfstrong}
\section{Global existence results for small data}\label{sec:globalsol}
In this section, we prove the existence of a global-in-time mild solution to problem \eqref{eq} for small initial data. 
We begin this section by gathering some properties of homogeneous Besov--Morrey spaces \cite{KozonoYamazaki}.
\begin{prop}\label{HSobolev}
  {\bf (Sobolev embedding)}
  Let $1\leq q\leq p<\infty$, $r\in[1,\infty]$ and $s\in\R$.
  Then, for all $l\in\left]0,1\right[$,
  \begin{align*}
    \cN^{s}_{p,q,r}\subset \cN^{s-N(1-l)/p}_{p/l,q/l,r}.
  \end{align*}
\end{prop}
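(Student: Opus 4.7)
The plan is to adapt the proof of the inhomogeneous analog Proposition \ref{Sobolevtypeineq} (which is Proposition 2.5 of \cite{KozonoYamazaki}) to the homogeneous setting. The difference is cosmetic: the dyadic index $j$ runs over all of $\Z$ rather than starting from $j=1$, and there is no low-frequency block $\phi_{(0)}$ to treat separately. The embedding will follow from a Bernstein-type inequality applied blockwise and then summed in $\ell^r(\Z)$.

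The key lemma to establish is the following Bernstein estimate in homogeneous Morrey spaces: there exists $C>0$, independent of $j\in\Z$, such that
\begin{align*}
  \norm{f\mid\cM^{p/l}_{q/l}}
  \leq
  C\, 2^{jN(1-l)/p}\norm{f\mid\cM^{p}_{q}}
\end{align*}
for every tempered distribution $f$ with $\supp\cF f\subset\{\xi\in\R^{N}:2^{j-1}\leq\abs{\xi}\leq 2^{j+1}\}$. To prove this I would pick an auxiliary bump $\tilde\phi\in C_{c}^{\infty}(\R^{N}\setminus\{0\})$ with $\tilde\phi\equiv 1$ on $\supp\phi_{0}$, set $\tilde\phi_{j}(\xi)\coloneqq\tilde\phi(2^{-j}\xi)$, and use the reproduction identity $f=\cF^{-1}\tilde\phi_{j}\ast f$. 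Applying a Young-type convolution inequality on Morrey spaces (which raises integrability from $\cM^{p}_{q}$ to $\cM^{p/l}_{q/l}$ at the cost of a suitable $L^{\sigma}$ norm of the kernel) and using the scaling $\cF^{-1}\tilde\phi_{j}(x)=2^{jN}(\cF^{-1}\tilde\phi)(2^{j}x)$ produces the factor $2^{jN(1-l)/p}$ by a change of variables.

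With this uniform-in-$j$ Bernstein inequality in hand, I would apply it to each block $\cF^{-1}\phi_{j}\cF u\in\cM^{p}_{q}$, multiply both sides by $2^{j(s-N(1-l)/p)}$, and take the $\ell^{r}(\Z)$ norm in $j$:
\begin{align*}
  \norm{u\mid\cN^{s-N(1-l)/p}_{p/l,q/l,r}}
  &=\norm{\{2^{j(s-N(1-l)/p)}\norm{\cF^{-1}\phi_{j}\cF u\mid\cM^{p/l}_{q/l}}\}_{j\in\Z}\mid l^{r}}
  \\
  &\leq C\norm{\{2^{js}\norm{\cF^{-1}\phi_{j}\cF u\mid\cM^{p}_{q}}\}_{j\in\Z}\mid l^{r}}
  =C\norm{u\mid\cN^{s}_{p,q,r}}.
\end{align*}

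The main obstacle is the Young-type convolution estimate for Morrey spaces; this is the only nontrivial ingredient, and once it is cited from the Kozono--Yamazaki framework the rest of the argument is a direct reindexing of the inhomogeneous proof. No separate treatment of a ``low-frequency'' block is required here, which actually simplifies the bookkeeping compared with Proposition \ref{Sobolevtypeineq}.
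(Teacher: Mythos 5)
The paper does not actually prove Proposition \ref{HSobolev}: it is imported verbatim from Kozono--Yamazaki \cite{KozonoYamazaki}, just like its inhomogeneous counterpart Proposition \ref{Sobolevtypeineq}, so there is no in-paper argument to compare yours against. That said, your sketch is exactly the standard proof from that reference: a Bernstein-type inequality
$\norm{f\mid\cM^{p/l}_{q/l}}\leq C\,2^{jN(1-l)/p}\norm{f\mid\cM^{p}_{q}}$
for annulus-supported blocks, obtained from the reproducing kernel $\cF^{-1}\tilde\phi_{j}$ and a scaling computation, followed by multiplication by $2^{j(s-N(1-l)/p)}$ and the $l^{r}(\Z)$ sum; your exponent bookkeeping ($1/\sigma=1-(1-l)/p$, so $\norm{\cF^{-1}\tilde\phi_{j}}_{L^{\sigma}}\sim 2^{jN(1-l)/p}$) is correct, and you are right that the homogeneous case is, if anything, cleaner since there is no $\phi_{(0)}$ block. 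The one point I would press you on is the ``Young-type convolution inequality on Morrey spaces'': unlike the Lebesgue case, Morrey spaces do \emph{not} satisfy a Young inequality with a bare $L^{\sigma}$ hypothesis on the kernel (local $L^{q}$ control of $f$ on balls says nothing about how mass far away contributes to the convolution unless the kernel decays pointwise). The correct hypothesis, as in Lemma 2.1 of \cite{KozonoYamazaki}, requires a radially decreasing integrable majorant (equivalently, pointwise decay like $(1+\abs{x})^{-N-\varepsilon}$ together with the relevant $L^{\sigma}$-type quantity). Since $\cF^{-1}\tilde\phi$ is Schwartz, this is harmless here and the dilation $2^{jN}(\cF^{-1}\tilde\phi)(2^{j}\cdot)$ still produces the factor $2^{jN(1-l)/p}$, so your argument goes through once the convolution lemma is cited in its correct form rather than as a literal Young inequality.
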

\begin{prop}\label{Hinclusion}
  For $1\leq q\leq p<\infty$, we have the inclusion relations
  \begin{align*}
    \cN_{p,q,1}^{0}\subset N_{p,q,1}^{0},
    \quad
    \cN_{p,q,1}^{0}\subset\cM^{p}_{q}\subset\cN^{0}_{p,q,\infty}.
  \end{align*}
\end{prop}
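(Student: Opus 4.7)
My plan is to prove the three inclusions separately, all from the Littlewood--Paley definitions, with the convolution inequality for Morrey spaces as the single nontrivial ingredient.

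For the elementary inclusion $\cM^{p}_{q}\subset\cN^{0}_{p,q,\infty}$, I write $\cF^{-1}\phi_{j}\cF u=\cF^{-1}\phi_{j}\ast u$ and note that $\cF^{-1}\phi_{j}(x)=2^{jN}(\cF^{-1}\phi_{0})(2^{j}x)$, so $\norm{\cF^{-1}\phi_{j}\mid L^{1}}=\norm{\cF^{-1}\phi_{0}\mid L^{1}}$ is independent of $j\in\Z$. The key lemma is that convolution by an $L^{1}$ kernel is bounded on $\cM^{p}_{q}$ with the obvious operator norm; this follows from Minkowski's integral inequality applied to the $L^{q}$-norm on $B(x_{0},R)$ after a translation, followed by the supremum in $(x_{0},R)$. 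Combining gives
\begin{align*}
  \sup_{j\in\Z}\norm{\cF^{-1}\phi_{j}\cF u\mid\cM^{p}_{q}}
  \leq
  \norm{\cF^{-1}\phi_{0}\mid L^{1}}\norm{u\mid\cM^{p}_{q}},
\end{align*}
which is exactly $\norm{u\mid\cN^{0}_{p,q,\infty}}\leq C\norm{u\mid\cM^{p}_{q}}$.

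For $\cN^{0}_{p,q,1}\subset\cM^{p}_{q}$, I use the Littlewood--Paley reconstruction $u=\sum_{j\in\Z}\cF^{-1}\phi_{j}\cF u$, which converges in $\cM^{p}_{q}$ (and hence in $\cS'$) for $u\in\cN^{0}_{p,q,1}$ because the $\cM^{p}_{q}$-norms of the dyadic pieces are absolutely summable by the $\ell^{1}$ hypothesis. The triangle inequality in $\cM^{p}_{q}$ immediately yields
\begin{align*}
  \norm{u\mid\cM^{p}_{q}}
  \leq
  \sum_{j\in\Z}\norm{\cF^{-1}\phi_{j}\cF u\mid\cM^{p}_{q}}
  =
  \norm{u\mid\cN^{0}_{p,q,1}},
\end{align*}
giving the desired inclusion (and implicitly showing that the equivalence class of $u$ modulo polynomials has a genuine distributional representative).

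For $\cN^{0}_{p,q,1}\subset N^{0}_{p,q,1}$, the dyadic blocks with $j\geq 1$ are handled by the trivial embedding $\cM^{p}_{q}\subset M^{p}_{q}$, which follows by comparing the two definitions on balls of radius $R\leq 1$. For the low-frequency block, I exploit the identity $\phi_{(0)}=\sum_{j\leq 0}\phi_{j}$ on $\R^{N}\setminus\{0\}$, which is forced by the two partition-of-unity relations just below the definition of the $\phi_{j}$. This gives $\cF^{-1}\phi_{(0)}\cF u=\sum_{j\leq 0}\cF^{-1}\phi_{j}\cF u$ (the series converging in $\cM^{p}_{q}$ by the previous step applied to the low-frequency tail), and then
\begin{align*}
  \norm{\cF^{-1}\phi_{(0)}\cF u\mid M^{p}_{q}}
  \leq
  \sum_{j\leq 0}\norm{\cF^{-1}\phi_{j}\cF u\mid\cM^{p}_{q}}.
\end{align*}
Adding the high-frequency part gives $\norm{u\mid N^{0}_{p,q,1}}\leq\norm{u\mid\cN^{0}_{p,q,1}}$.

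The main conceptual obstacle is not any single estimate but the bookkeeping around the quotient $\cS'/\cP$: one must check that the low-frequency tail $\sum_{j\leq 0}\cF^{-1}\phi_{j}\cF u$ actually converges in $\cS'$ (and not merely modulo polynomials), which is exactly where the summability index $r=1$ matters. Once that convergence is in hand via the $\cM^{p}_{q}$-triangle inequality above, all three inclusions reduce to direct comparison of norms with no further analytic input beyond the $L^{1}$-convolution bound for Morrey spaces.
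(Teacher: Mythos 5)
Your argument is correct, but note that the paper itself offers no proof of this proposition: it is quoted from Kozono--Yamazaki \cite{KozonoYamazaki} (their Propositions 2.5 and 2.14 and the surrounding material), so there is no in-paper proof to compare against. What you write is essentially the standard argument used there: the single analytic ingredient is the Young-type bound $\norm{K\ast u\mid\cM^{p}_{q}}\leq\norm{K\mid L^{1}}\norm{u\mid\cM^{p}_{q}}$ (Minkowski's integral inequality on $L^{q}(B(x_{0},R))$ plus translation invariance of the sup over centers), combined with the scale invariance $\norm{\cF^{-1}\phi_{j}\mid L^{1}}=\norm{\cF^{-1}\phi_{0}\mid L^{1}}$, telescoping of the $\phi_{j}$, and the triangle inequality in the Banach space $\cM^{p}_{q}$. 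One point deserves slightly more care than you give it: the identity $\phi_{(0)}=\sum_{j\leq 0}\phi_{j}$ fails at $\xi=0$, so $\cF^{-1}\phi_{(0)}\cF u=\sum_{j\leq 0}\cF^{-1}\phi_{j}\cF u$ is legitimate only after you have replaced the class $u\in\cS'/\cP$ by the canonical representative $\tilde{u}=\sum_{j\in\Z}\cF^{-1}\phi_{j}\cF u$, whose Fourier transform carries no mass at the origin; since you construct exactly this representative in the second step (where $r=1$ guarantees convergence in $\cM^{p}_{q}\subset\cS'$), the argument closes, and in fact yields $\norm{u\mid N^{0}_{p,q,1}}\leq\norm{u\mid\cN^{0}_{p,q,1}}$ with constant one.
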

\begin{lem}\label{HGauss}
  Let $s\leq\sigma,\: 1\leq q\leq p<\infty$ and $r\in[1,\infty]$.
  Then, there exists $C>0$ such that for all $u\in N^{s}_{p,q,r}$, the estimate
  \begin{align*}
    \norm{e^{t\Del}u\mid\cN^{\sig}_{p,q,r}}
    \leq C
    t^{\frac{s-\sigma}{2}}
    \norm{u\mid\cN^{s}_{p,q,r}}
    \quad\text{for}\quad t>0
  \end{align*}
  holds. Furthermore, if $s<\sigma$, the estimate
  \begin{align*}
    \norm{e^{t\Del}u\mid\cN^{\sig}_{p,q,1}}
    \leq C
    t^{\frac{s-\sigma}{2}}
    \norm{u\mid\cN^{s}_{p,q,\infty}}
    \quad\text{for}\quad t>0
  \end{align*}
  holds.
\end{lem}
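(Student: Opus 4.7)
The plan is to reduce to Fourier-localized pieces and combine a convolution estimate in the Morrey norm with explicit heat-kernel bounds, following the same broad strategy as the proof of Lemma~\ref{smoothingforGauss} but now exploiting the fully homogeneous dyadic decomposition. For each $j \in \Z$ I would introduce a slightly fatter cutoff $\tilde{\phi}_j$ satisfying $\tilde{\phi}_j \phi_j = \phi_j$ and write
\[
\cF^{-1}\phi_j(\xi)\cF e^{t\Delta} u \;=\; K_{j,t} \ast \cF^{-1}\phi_j(\xi)\cF u,
\qquad
K_{j,t} \coloneqq \cF^{-1}\bigl[\tilde{\phi}_j(\xi)\, e^{-t|\xi|^2}\bigr].
\]
The crucial kernel bound is $\norm{K_{j,t}\mid L^1(\R^N)} \leq C\,e^{-c\, 2^{2j} t}$ for some fixed $c>0$ independent of $j$ and $t$, which I would verify by rescaling $\xi = 2^j \eta$ to reduce to a fixed annulus: on that annulus $e^{-t\,2^{2j}|\eta|^2}$ and all its derivatives are uniformly bounded when $2^{2j}t \leq 1$ and decay like $e^{-c\,2^{2j}t}$ when $2^{2j}t \geq 1$, so the Fourier inverse of $\tilde{\phi}_0(\eta) e^{-t\,2^{2j}|\eta|^2}$ lies in $L^1$ with exactly that decay. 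Combining this with the Young-type inequality $\norm{K \ast g\mid \cM^p_q} \leq \norm{K\mid L^1}\norm{g\mid \cM^p_q}$, which follows by applying Minkowski's inequality inside the ball integrals defining the Morrey norm, I obtain
\[
\norm{\cF^{-1}\phi_j \cF e^{t\Delta} u\mid \cM^p_q} \;\leq\; C\, e^{-c\, 2^{2j} t}\, \norm{\cF^{-1}\phi_j \cF u\mid \cM^p_q}.
\]

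To reach the first stated inequality, I would multiply by the frequency weight $2^{j\sig}$, split it as $2^{j(\sig - s)} \cdot 2^{js}$, and apply the elementary fact $x^{(\sig-s)/2} e^{-cx} \leq C$ for $x \geq 0$ (which uses only $\sig - s \geq 0$) with $x = 2^{2j}t$, yielding
\[
2^{j(\sig - s)} e^{-c\, 2^{2j} t} \;\leq\; C\, t^{(s-\sig)/2}
\]
uniformly in $j \in \Z$. Taking the $\ell^r$ norm in $j$ on both sides then gives the first inequality for every $r \in [1,\infty]$. For the sharper estimate with $\ell^1$ on the left and $\ell^\infty$ on the right, the idea is to factor out $\sup_j 2^{js}\norm{\cF^{-1}\phi_j\cF u\mid \cM^p_q} = \norm{u\mid \cN^s_{p,q,\infty}}$ and then control
\[
\sum_{j\in\Z} 2^{j(\sig-s)} e^{-c\, 2^{2j} t}.
\]
Comparing with the integral $\int_{-\infty}^{\infty} 2^{u(\sig-s)} e^{-c\,2^{2u}t}\, du$ and substituting $y = 2^{2u}t$ converts this to a Gamma-type integral proportional to $t^{(s-\sig)/2}$; convergence requires $\sig - s > 0$, precisely the stated hypothesis.

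The main obstacle I anticipate is the $L^1$ kernel bound itself: one must rescale carefully and treat the two regimes $2^{2j}t \leq 1$ and $2^{2j}t \geq 1$ separately to extract uniform $C^{N_0}$ control of $\tilde{\phi}_0(\eta) e^{-t\,2^{2j}|\eta|^2}$ together with exponential decay in the high-frequency regime. Once that bound is secure, the remaining steps amount to dyadic bookkeeping essentially identical to the inhomogeneous argument used in Lemma~\ref{smoothingforGauss}, with the simplification that here we do not need to separate out a low-frequency $\phi_{(0)}$-piece and consequently obtain a clean $t^{(s-\sig)/2}$ factor rather than $1 + t^{(s-\sig)/2}$.
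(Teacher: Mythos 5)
Your proposal is correct, but note that the paper itself gives no proof of Lemma \ref{HGauss}: the whole block consisting of Propositions \ref{HSobolev}, \ref{Hinclusion} and Lemma \ref{HGauss} is quoted from Kozono--Yamazaki \cite{KozonoYamazaki} (just as the inhomogeneous analogue, Lemma \ref{smoothingforGauss}, is cited as their Theorem 3.1), so there is nothing in the paper to compare against except the citation; your argument is essentially the standard one underlying the cited result. The two points that genuinely need checking both go through: (i) the Young-type inequality $\norm{K\ast g\mid\cM^{p}_{q}}\leq\norm{K\mid L^{1}}\,\norm{g\mid\cM^{p}_{q}}$ holds by exactly the Minkowski argument you indicate, because the global Morrey norm is translation invariant in the ball centre, i.e.\ $\norm{g(\cdot-y)\mid L^{q}(B(x_{0},R))}=\norm{g\mid L^{q}(B(x_{0}-y,R))}$, and this is in fact Lemma 1.8 of \cite{KozonoYamazaki}; (ii) the kernel bound $\norm{K_{j,t}\mid L^{1}}\leq Ce^{-c2^{2j}t}$ follows from the rescaling you describe, since on the annulus $\mathrm{supp}\,\tilde{\phi}_{0}$ every derivative $\partial^{\beta}\bigl(e^{-t2^{2j}|\eta|^{2}}\bigr)$ is bounded by $C_{\beta}(1+t2^{2j})^{|\beta|}e^{-ct2^{2j}}\leq C_{\beta}'e^{-c't2^{2j}}$, uniformly in $j$ and $t$. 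One small point to tidy in a final write-up: the summand $2^{j(\sigma-s)}e^{-c2^{2j}t}$ is not monotone in $j$, so the comparison with the integral should be justified (split at $2^{2j}t=1$, or use that the function is unimodal so the sum is at most the integral plus its maximum, which you have already bounded by $Ct^{(s-\sigma)/2}$). With that remark the proof is complete, including the correct appearance of the purely homogeneous factor $t^{(s-\sigma)/2}$ without the additive constant $1$ that occurs in the inhomogeneous Lemma \ref{smoothingforGauss}.
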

As before, we get estimates for the operators $S_{\al}(t)$ and $P_{\al}(t)$.
\begin{lem}\label{HSalpha}
  Let $s\leq\sigma,\: 1\leq q\leq p<\infty,\: r\in[1,\infty]$ and $4>\sigma -s$.
  Then, there exists $C>0$ such that for all $u\in \cN^{s}_{p,q,r}$, the estimate
  \begin{align*}
    \norm{S_{\al}(t)u\mid\cN^{\sig}_{p,q,r}}
    \leq C
    t^{\frac{(s-\sigma)\al}{2}}
    \norm{u\mid\cN^{s}_{p,q,r}}
    \quad\text{for}\quad t>0
  \end{align*}
  holds. Furthermore, if $s<\sigma$, the estimate
  \begin{align*}
    \norm{S_{\al}(t)u\mid\cN^{\sig}_{p,q,1}}
    \leq C
    t^{\frac{(s-\sigma)\al}{2}}
    \norm{u\mid\cN^{s}_{p,q,\infty}}
    \quad\text{for}\quad t>0
  \end{align*}
  holds.
\end{lem}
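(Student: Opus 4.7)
The plan is to imitate the proof of Lemma \ref{smoothingforSalpha}, replacing the inhomogeneous Besov--Morrey smoothing bound for the Gauss kernel with its homogeneous counterpart Lemma \ref{HGauss}. Since Lemma \ref{HGauss} yields a pure power $t^{(s-\sigma)/2}$ rather than $1 + t^{(s-\sigma)/2}$, the computation here is actually cleaner than in the inhomogeneous case; no extra constant term appears.

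Concretely, I would start from the definition
\[
S_{\al}(t)u = \al \int_{0}^{\infty} \theta\,\Phi_{\al}(\theta)\, e^{t^{\al}\theta\Del}u\, d\theta,
\]
apply Minkowski's inequality in the Banach space $\cN^{\sig}_{p,q,r}$ to move the norm inside the $\theta$-integral, and then invoke the first estimate of Lemma \ref{HGauss} with $e^{t^{\al}\theta\Del}$ in place of $e^{t\Del}$. This yields
\[
\norm{S_{\al}(t)u \mid \cN^{\sig}_{p,q,r}} \leq C\al \norm{u\mid\cN^{s}_{p,q,r}} \int_{0}^{\infty} \theta\,\Phi_{\al}(\theta)\,(t^{\al}\theta)^{\frac{s-\sigma}{2}}\,d\theta.
\]
Pulling out the factor $t^{(s-\sigma)\al/2}$ leaves the integral $\int_{0}^{\infty} \theta^{1+(s-\sigma)/2}\Phi_{\al}(\theta)\,d\theta$, which by Lemma \ref{wright}-\eqref{wright-b} is finite provided $1+(s-\sigma)/2 > -1$, i.e.\ $\sigma - s < 4$; this is exactly the hypothesis. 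Putting the pieces together gives the first claimed bound.

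For the refined estimate with $r=1$ on the left and $r=\infty$ on the right, I would repeat the same computation using the second inequality in Lemma \ref{HGauss} instead of the first, which requires the strict inequality $s<\sigma$ that is now assumed. All subsequent steps (Minkowski, change of variables, applying Lemma \ref{wright}-\eqref{wright-b}) go through without change, and the same integrability condition $\sigma - s < 4$ suffices.

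The only point that needs a moment's attention is verifying that the integrability condition at $\theta \to 0^{+}$ is governed by $1+(s-\sigma)/2 > -1$; since $\Phi_{\al}(\theta)$ is bounded near the origin by Lemma \ref{wright}, this reduces to checking the exponent of $\theta$, and the hypothesis $4 > \sigma - s$ is exactly what is needed. There is no essential obstacle beyond that bookkeeping.
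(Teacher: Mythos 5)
Your proposal is correct and follows essentially the same route as the paper: Minkowski's inequality to pass the $\cN^{\sig}_{p,q,r}$ norm inside the $\theta$-integral, the homogeneous smoothing estimate of Lemma \ref{HGauss} applied to $e^{t^{\al}\theta\Del}$, and Lemma \ref{wright}-\eqref{wright-b} with the exponent condition $1+\frac{s-\sigma}{2}>-1$, i.e.\ $4>\sigma-s$. The observation that the homogeneous case is cleaner (a pure power of $t$ rather than $1+t^{(s-\sigma)\al/2}$) matches the paper's computation exactly.
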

\begin{proof}
  Lemma \ref{HGauss}, Lemma \ref{wright}-\eqref{wright-b} and Minkowski inequality give
  \begin{align*}
    \norm{S_{\al}(t)u\mid\cN^{\sig}_{p,q,r}}
    &=\norm{\al\int_{0}^{\infty}\theta\Phi_{\al}(\theta)e^{t^{\al}\theta\Del}u(x)d\theta\mid\cN^{\sig}_{p,q,r}}
    \\
    &\leq
    \al\int_{0}^{\infty}\theta\Phi_{\al}(\theta)
    \norm{e^{t^{\al}\theta\Del}u\mid\cN^{\sig}_{p,q,r}}
    d\theta
    \\
    &\leq
    C \al\int_{0}^{\infty}
    \theta\Phi_{\al}(\theta)
    (t^{\al}\theta)^{\frac{s-\sigma}{2}}
    \norm{u\mid\cN^{s}_{p,q,r}}
    d\theta
    \\
    &= C\norm{u\mid\cN^{s}_{p,q,r}}
    t^{\frac{(s-\sigma)\al}{2}}
    \int_{0}^{\infty}
    \theta^{1+\frac{s-\sigma}{2}}
    \Phi_{\al}(\theta)d\theta
    \\
    &\leq C\norm{u\mid\cN^{s}_{p,q,r}}
    t^{\frac{(s-\sigma)\al}{2}}.
  \end{align*}
  Note that $\:\:1+\frac{s-\sigma}{2}>-1\:\:$ if $\:\:4>\sigma-s$.
\end{proof}
\begin{lem}\label{HPalpha}
  Let $s\leq\sigma,\: 1\leq q\leq p<\infty,\: r\in[1,\infty]$ and $2>\sigma -s$.
  Then, there exists $C>0$ such that for all $u\in \cN^{s}_{p,q,r}$, the estimate
  \begin{align*}
    \norm{P_{\al}(t)u\mid\cN^{\sig}_{p,q,r}}
    \leq C
    t^{\frac{(s-\sigma)\al}{2}}
    \norm{u\mid\cN^{s}_{p,q,r}}
    \quad\text{for}\quad t>0
  \end{align*}
  holds. Furthermore, if $s<\sigma$, the estimate
  \begin{align*}
    \norm{P_{\al}(t)u\mid\cN^{\sig}_{p,q,1}}
    \leq C
    t^{\frac{(s-\sigma)\al}{2}}
    \norm{u\mid\cN^{s}_{p,q,\infty}}
    \quad\text{for}\quad t>0
  \end{align*}
  holds.
\end{lem}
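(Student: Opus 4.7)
The plan is to imitate the proof of Lemma \ref{HSalpha} almost verbatim, since the only structural difference between $P_{\al}(t)$ and $S_{\al}(t)$ is the absence of the extra factor $\theta$ inside the integral defining $P_{\al}$. First, I would invoke the definition
\[
P_{\al}(t)u = \int_{0}^{\infty}\Phi_{\al}(\theta)e^{t^{\al}\theta\Del}u\,d\theta
\]
and apply Minkowski's integral inequality on the $\cN^{\sig}_{p,q,r}$ norm to bring the norm inside the $\theta$-integral. Then I would apply the first inequality of Lemma \ref{HGauss} (with time parameter $t^{\al}\theta>0$) to obtain, up to a constant,
\[
\norm{P_{\al}(t)u \mid \cN^{\sig}_{p,q,r}}
\leq C\,t^{(s-\sig)\al/2}\,\norm{u\mid \cN^{s}_{p,q,r}}
\int_{0}^{\infty}\theta^{(s-\sig)/2}\Phi_{\al}(\theta)\,d\theta.
\]

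The key point — and the only place where the hypothesis matters — is verifying that the remaining $\theta$-integral is finite. By Lemma \ref{wright}-\eqref{wright-b}, this integral equals $\Gam(1+(s-\sig)/2)/\Gam(1+\al(s-\sig)/2)$ and is finite precisely when $(s-\sig)/2 > -1$, i.e.\ when $\sig - s < 2$, which is exactly the hypothesis $2>\sig-s$ stated in the lemma. (This is stricter than the condition $4>\sig-s$ in Lemma \ref{HSalpha}, and the loss of two units is entirely accounted for by the missing factor $\theta$ in the definition of $P_{\al}$ as compared with $S_{\al}$.)

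For the second estimate, the argument is identical except that one uses the second inequality of Lemma \ref{HGauss}, which upgrades the $r=\infty$ norm on the right-hand side to the $r=1$ norm on the left under the strict inequality $s<\sig$; the $\theta$-integral to be bounded is the same, so the condition $2>\sig-s$ still suffices.

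There is really no substantive obstacle: the entire proof is a direct transfer of Lemma \ref{HSalpha}'s argument, with the only bookkeeping being to track that $P_{\al}$ has no $\theta$ prefactor and hence the admissible range of $\sig-s$ shrinks from $(0,4)$ to $(0,2)$. I would therefore simply write ``The proof is analogous to that of Lemma \ref{HSalpha}, replacing the factor $\theta^{1+(s-\sig)/2}$ in the final $\theta$-integral by $\theta^{(s-\sig)/2}$, which is integrable near zero against $\Phi_{\al}$ whenever $2>\sig-s$.''
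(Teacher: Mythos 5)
Your proposal is correct and matches the paper exactly: the paper's proof of Lemma \ref{HPalpha} is literally ``The proof is similar to that of Lemma \ref{HSalpha},'' and your write-up is precisely that adaptation, including the correct identification of where the hypothesis $2>\sigma-s$ (rather than $4>\sigma-s$) is used, namely the convergence of $\int_{0}^{\infty}\theta^{(s-\sigma)/2}\Phi_{\al}(\theta)\,d\theta$ via Lemma \ref{wright}. No gaps.
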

\begin{proof}
  The proof is similar to that of Lemma \ref{HSalpha}.
\end{proof}
We begin the proof of Theorem \ref{globalsol}.
We set $\beta\coloneqq\frac{\al}{\gam-1}-\frac{\al N}{2p}$ and we write
\begin{align*}
  &X\coloneqq
  \qty{
  u(x,t) : \text{Lebesgue measurable in } \R^{N}\times\R_{>0}
  \mid
  \norm{u\mid X}<\infty
  },
\end{align*}
where
\begin{align*}
  \norm{u\mid X}
  \coloneqq\sup_{0<t}t^{\beta}\norm{u(\cdot,t)\mid\cM^{p}_{q}}.
\end{align*}
We define $u_{0}\coloneqq P_{\al}(t)\mu$ and $u_{n}\:(n\in\Z_{\geq 1})$ inductively by
\begin{align*}
  u_{n}(t)\coloneqq u_{0}(t)+\int_{0}^{t}(t-\tau)^{\al -1}S_{\al}(t-\tau)|u_{n-1}|^{\gam -1}u_{n-1}(\tau)
  \,d\tau.
\end{align*}
\begin{rmk}\label{aboutbeta}
  The constant $\beta$ satisfies $-\beta\gam>-1$ and $\beta<\al$ if we take $p$ and $\gam$ as in Theorem $\ref{globalsol}$.
\end{rmk}
As before, we prepare the following three lemmata to prove Theorem \ref{globalsol}.
\begin{lem}\label{Hstep1}
  Let $1\leq q\leq p<\infty$ 
  and $\del>0$.
  Then, there exists $C_{4}>0$ such that for all $\mu(x)\in \cN^{N/p-2/(\gam -1)}_{p,q,\infty}$ satisfying  $\onorm{\mu}{\cN^{N/p-2/(\gam -1)}_{p,q,\infty}}\leq\del$, the following inequality
  \[
  \norm{u_{0}\mid X}\leq C_{4}\del
  \]
  holds.
\end{lem}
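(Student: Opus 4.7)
The plan is to get this bound by a single application of Lemma \ref{HPalpha} (second estimate) combined with the inclusion $\cN^{0}_{p,q,1}\subset\cM^{p}_{q}$ from Proposition \ref{Hinclusion}. This is much simpler than the proof of Lemma \ref{u0bound}, because on the homogeneous side the decay estimate is a pure power of $t$ (no $1+t^{(s-\sigma)\al/2}$), so no frequency splitting and no smallness of $T$ is required; we just need the exponents to match $\beta$.

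Concretely, I would set $s \coloneqq N/p - 2/(\gam-1)$ and $\sig \coloneqq 0$. The hypothesis $p>N(\gam-1)/2$ in Theorem \ref{globalsol} gives $s<0$, so $s<\sig$. Furthermore, the bound $p<N(\gam-1)/(4-2\gam)_{+}$ in the same hypothesis is equivalent (when $\gam<2$; trivial when $\gam\geq 2$) to $\sig-s = 2/(\gam-1)-N/p < 2$, which is exactly the condition needed in Lemma \ref{HPalpha}. Then the second estimate of Lemma \ref{HPalpha} applied to $\mu\in\cN^{s}_{p,q,\infty}$ yields
\begin{align*}
  \norm{P_{\al}(t)\mu\mid\cN^{0}_{p,q,1}}
  \leq C\, t^{s\al/2}\norm{\mu\mid\cN^{s}_{p,q,\infty}}
  \leq C\del\, t^{s\al/2}
  \qquad\text{for all } t>0.
\end{align*}

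Next, a direct computation of $s\al/2$ shows
\begin{align*}
  \frac{s\al}{2}
  =\frac{\al}{2}\qty(\frac{N}{p}-\frac{2}{\gam-1})
  = \frac{\al N}{2p}-\frac{\al}{\gam-1}
  = -\beta,
\end{align*}
so combining with $\norm{\,\cdot\,\mid\cM^{p}_{q}}\leq C\norm{\,\cdot\,\mid\cN^{0}_{p,q,1}}$ from Proposition \ref{Hinclusion} gives
\begin{align*}
  t^{\beta}\norm{P_{\al}(t)\mu\mid\cM^{p}_{q}}
  \leq C\, t^{\beta}\cdot t^{-\beta}\del
  = C\del
\end{align*}
uniformly in $t>0$. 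Taking the supremum over $t>0$ yields $\norm{u_{0}\mid X}\leq C_{4}\del$ with $C_{4}$ depending only on the constants from Lemma \ref{HPalpha} and Proposition \ref{Hinclusion}.

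There is no real obstacle here; the only thing to verify carefully is that the two inequalities on $p$ in Theorem \ref{globalsol} exactly reproduce the hypotheses $s<\sig$ and $\sig-s<2$ needed to invoke the second (strictly smoothing) estimate of Lemma \ref{HPalpha}, and that the resulting time exponent coincides with $-\beta$. Both are immediate algebraic checks, as shown above.
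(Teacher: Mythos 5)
Your proposal is correct and follows essentially the same route as the paper's proof: apply the second (strict smoothing) estimate of Lemma \ref{HPalpha} with $s=N/p-2/(\gam-1)$, $\sig=0$, pass to $\cM^{p}_{q}$ via Proposition \ref{Hinclusion}, and observe that the time exponent equals $-\beta$. You are in fact slightly more explicit than the paper in verifying that the hypotheses $s<\sig$ and $\sig-s<2$ follow from the bounds on $p$ in Theorem \ref{globalsol}, which the paper delegates to Remark \ref{aboutbeta}.
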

\begin{proof}
  By using Lemma \ref{Hinclusion} and Lemma \ref{HPalpha}, there exists $C>0$ such that
  \begin{align*}
    \norm{u_{0}\mid\cM^{p}_{q}}
    &=\norm{P_{\al}(t)\mu\mid\cM^{p}_{q}}
    \leq\norm{P_{\al}(t)\mu\mid\cN^{0}_{p,q,1}}
    \leq C t^{-\beta}\norm{\mu\mid\cN^{N/p-2/(\gam -1)}_{p,q,\infty}}.
  \end{align*}
  Seeing Remark \ref{aboutbeta}, we have
  \begin{align*}
    t^{\beta}\norm{u_{0}\mid\cM^{p}_{q}}
    &\leq C t^{\beta-\beta}\norm{\mu\mid\cN^{s}_{p,q,\infty}}
    \leq C \del\quad \text{for }t>0.
  \end{align*}
  Taking sup in $t>0$, we get the claim.
\end{proof}
\begin{lem}\label{Hstep2}
  Let $\gam>1$ and $\gam\leq q\leq p<\infty$. 
  Then, there exists $C_{5}>0$ such that
  \begin{align*}
    \norm{u_{n+1}\mid X}
    &\leq
    \norm{u_{0}\mid X}+
    C_{5}\norm{u_{n}\mid X}^{\gam}
  \end{align*}
  for $n=0,1,...$.
\end{lem}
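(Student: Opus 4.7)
The plan is to mirror the structure of Lemma \ref{zenkashiki}, but work entirely in homogeneous Besov--Morrey spaces so as to obtain a scale-invariant bound, and to track the weight $t^{\beta}$ throughout so as to get a global-in-time estimate. By the triangle inequality, writing
\[
v_{n}(t)\coloneqq u_{n+1}(t)-u_{0}(t)=\int_{0}^{t}(t-\tau)^{\al-1}S_{\al}(t-\tau)|u_{n}|^{\gam-1}u_{n}(\tau)\,d\tau,
\]
it suffices to show $\norm{v_{n}\mid X}\leq C\norm{u_{n}\mid X}^{\gam}$.

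First I would bound $\norm{v_{n}(\cdot,t)\mid\cM^{p}_{q}}$ by pushing the integrand through the embedding chain $\cN^{0}_{p,q,1}\subset\cM^{p}_{q}$ (Proposition \ref{Hinclusion}) followed by the homogeneous Sobolev embedding of Proposition \ref{HSobolev} with $l=1/\gam$, which gives $\cN^{N(\gam-1)/p}_{p/\gam,q/\gam,1}\subset\cN^{0}_{p,q,1}$. The smoothing estimate of Lemma \ref{HSalpha} then yields
\[
\norm{S_{\al}(t-\tau)|u_{n}|^{\gam-1}u_{n}(\tau)\mid\cN^{N(\gam-1)/p}_{p/\gam,q/\gam,1}}\leq C(t-\tau)^{-\al N(\gam-1)/(2p)}\norm{|u_{n}|^{\gam-1}u_{n}(\tau)\mid\cN^{0}_{p/\gam,q/\gam,\infty}}.
\]
The hypothesis $4>\sigma-s$ in Lemma \ref{HSalpha} is met since $p>N(\gam-1)/2$ forces $N(\gam-1)/p<2$.

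Next I would combine $\cM^{p/\gam}_{q/\gam}\subset\cN^{0}_{p/\gam,q/\gam,\infty}$ (Proposition \ref{Hinclusion}) with the homogeneous analogue of Proposition \ref{rmkformorreynorm}(\ref{rmkformorreynorm-2}), namely $\norm{|u|^{\gam}\mid\cM^{p/\gam}_{q/\gam}}=\norm{u\mid\cM^{p}_{q}}^{\gam}$ (a direct consequence of the definition of $\cM^{p}_{q}$), to obtain
\[
\norm{|u_{n}|^{\gam-1}u_{n}(\tau)\mid\cN^{0}_{p/\gam,q/\gam,\infty}}\leq C\tau^{-\beta\gam}\norm{u_{n}\mid X}^{\gam}.
\]
Assembling these estimates gives
\[
\norm{v_{n}(\cdot,t)\mid\cM^{p}_{q}}\leq C\norm{u_{n}\mid X}^{\gam}\int_{0}^{t}(t-\tau)^{\al-1-\al N(\gam-1)/(2p)}\tau^{-\beta\gam}\,d\tau.
\]

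The integral is of Beta-function type. Integrability at $\tau=t$ requires $\al-\al N(\gam-1)/(2p)>0$, which follows from $p>q_{c}=N(\gam-1)/2$; integrability at $\tau=0$ requires $\beta\gam<1$, which is part of Remark \ref{aboutbeta}. The integral therefore equals $Ct^{\al-\al N(\gam-1)/(2p)-\beta\gam}$. A direct algebraic check using $\beta=\al/(\gam-1)-\al N/(2p)$ shows that
\[
\al-\tfrac{\al N(\gam-1)}{2p}-\beta\gam=-\beta,
\]
so $t^{\beta}\norm{v_{n}(\cdot,t)\mid\cM^{p}_{q}}$ is uniformly bounded in $t>0$ by $C\norm{u_{n}\mid X}^{\gam}$. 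Taking $\sup_{t>0}$ and adding $\norm{u_{0}\mid X}$ via the triangle inequality concludes the proof. The main obstacle is precisely this scaling identity: it is the reason the critical exponent $N/p-2/(\gam-1)$ appears in Theorem \ref{globalsol}, and it is what distinguishes this global argument from the local one in Lemma \ref{zenkashiki}, where the analogous weight was only required to produce a positive power of $T$ rather than exact scale invariance.
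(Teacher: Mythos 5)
Your proposal is correct and follows essentially the same route as the paper's proof: the embedding chain $\cM^{p}_{q}\supset\cN^{0}_{p,q,1}\supset\cN^{N(\gam-1)/p}_{p/\gam,q/\gam,1}$, the smoothing estimate for $S_{\al}$, the Morrey power identity, and the Beta-type integral whose exponent collapses to $-\beta$ by the scale-invariance of $\beta=\al/(\gam-1)-\al N/(2p)$. The paper performs the same computation, writing the final power as $t^{\al-\frac{N(\gam-1)\al}{2p}+\beta(1-\gam)}$, which is your identity in disguise.
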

\begin{proof}
  By using Proposition \ref{Hinclusion}, Proposition \ref{HSobolev},
  Lemma \ref{HSalpha}, Remark \ref{aboutbeta} and Proposition \ref{rmkformorreynorm}.\ref{rmkformorreynorm-2}, we see that
  \begin{align*}
    \norm{u_{n+1}(\cdot,t)-u_{0}(\cdot,t)\mid\cM^{p}_{q}}
    \leq C&
    \norm{u_{n+1}(\cdot,t)-u_{0}(\cdot,t)\mid\cN^{0}_{p,q,1}}
    \\
    \leq C&
    \int_{0}^{t}(t-\tau)^{\al -1}
    \norm{S_{\al}(t-\tau)u_{n}^{\gam}(\cdot,\tau)\mid\cN^{0}_{p,q,1}}
    \,d\tau
    \\
    \leq C&
    \int_{0}^{t}(t-\tau)^{\al -1}
    \norm{S_{\al}(t-\tau)u_{n}^{\gam}(\cdot,\tau)
    \mid\cN^{\frac{N(\gam -1)}{p}}_{p/\gam,q/\gam,1}}
    \,d\tau
    \\
    \leq C&
    \int_{0}^{t}
    (t-\tau)^{\al -1-\frac{N(\gam -1)\al}{2p}}
    \norm{u_{n}^{\gam}(\cdot,\tau)\mid\cN^{0}_{p/\gam,q/\gam,\infty}}
    \,d\tau
    \\
    \leq C&
    \int_{0}^{t}(t-\tau)^{\al -1-\frac{N(\gam -1)\al}{2p}}
    \norm{u_{n}^{\gam}(\cdot,\tau)\mid\cM^{p/\gam}_{q/\gam}}
    \,d\tau
    \\
    \leq C&
    \norm{u_{n}\mid X}^{\gam}
    \int_{0}^{t}(t-\tau)^{\al -1-\frac{N(\gam -1)\al}{2p}}
    \tau^{-\beta\gam}
    \,d\tau
    \\
    =C&
    \norm{u_{n}\mid X}^{\gam}
    t^{\al -\frac{N(\gam -1)\al}{2p}-\beta\gam}
    \int_{0}^{1}(1-\tau)^{\al -1-\frac{N(\gam -1)\al}{2p}}
    \tau^{-\beta\gam}
    \,d\tau.
  \end{align*}
  Therefore, we have
  \begin{align*}
    \norm{u_{n+1}(\cdot,t)-u_{0}(\cdot,t)\mid\cM^{p}_{q}}t^{\beta}
    &\leq C
    \norm{u_{n}\mid X}^{\gam}
    t^{\al -\frac{N(\gam -1)\al}{2p}+\beta(1-\gam)}
    =C
    \norm{u_{n}\mid X}^{\gam}
  \end{align*}
  for all $t>0$. Then, we take a supremum over $t>0$ and we get
  \begin{align*}
    \norm{u_{n+1}-u_{0}\mid X}
    &\leq C
    \norm{u_{n}\mid X}^{\gam}.
  \end{align*}
\end{proof}
By using Lemma \ref{Hstep1} and Lemma \ref{Hstep2} repeatedly, we can check that functions $u_{n}$ have a bound
\begin{align}\label{Hbound}
\sup_{n\in\N}\norm{u_{n}\mid X}\leq M:=2C_{4}\del
\end{align}
if $\del>0$ satisfies $2^{\gam}C_{4}^{\gam-1}C_{5}\del^{\gam-1}\leq 1$.
\begin{lem}\label{Hstep3}
  Let $\gam>1$ and $\gam\leq q\leq p<\infty$. 
  Then, there exists $C>0$ such that
  \begin{align*}
    \norm{u_{n+2}-u_{n+1}\mid X}\leq C\del^{\gam -1}
    \norm{u_{n+1}-u_{n}\mid X}
  \end{align*}
  for $n=0,1,...$.
\end{lem}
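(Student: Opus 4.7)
The plan is to mimic Lemma \ref{Hstep2} almost verbatim, with the pointwise nonlinearity replaced by the difference. Subtracting the recurrence relations for $u_{n+2}$ and $u_{n+1}$ yields
\begin{align*}
u_{n+2}(t)-u_{n+1}(t)=\int_0^t(t-\tau)^{\al-1}S_\al(t-\tau)\bigl(|u_{n+1}|^{\gam-1}u_{n+1}-|u_n|^{\gam-1}u_n\bigr)(\tau)\,d\tau,
\end{align*}
and I would then apply Proposition \ref{Hinclusion}, Proposition \ref{HSobolev} and Lemma \ref{HSalpha} in exactly the chain $\cM^p_q\hookleftarrow\cN^0_{p,q,1}\hookleftarrow\cN^{N(\gam-1)/p}_{p/\gam,q/\gam,1}$ used in Lemma \ref{Hstep2}. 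This bounds the $\cM^p_q$-norm of the integrand at time $t$ by $C(t-\tau)^{\al-1-N(\gam-1)\al/2p}$ times the $\cM^{p/\gam}_{q/\gam}$-norm of the nonlinear difference at time $\tau$.

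The only new ingredient compared to Lemma \ref{Hstep2} is the pointwise inequality
\begin{align*}
\bigl||a|^{\gam-1}a-|b|^{\gam-1}b\bigr|\leq C|a-b|\bigl(|a|^{\gam-1}+|b|^{\gam-1}\bigr),
\end{align*}
obtained from the mean value theorem, combined with a H\"older-type bound in Morrey spaces together with Proposition \ref{rmkformorreynorm}.\ref{rmkformorreynorm-2}. These yield
\begin{align*}
&\norm{|u_{n+1}|^{\gam-1}u_{n+1}-|u_n|^{\gam-1}u_n(\cdot,\tau)\mid\cM^{p/\gam}_{q/\gam}}\\
&\qquad\leq C\norm{u_{n+1}-u_n(\cdot,\tau)\mid\cM^p_q}\bigl(\norm{u_{n+1}(\cdot,\tau)\mid\cM^p_q}^{\gam-1}+\norm{u_n(\cdot,\tau)\mid\cM^p_q}^{\gam-1}\bigr),
\end{align*}
which is the homogeneous-Besov--Morrey analogue of the fourth-inequality step in the corresponding local-in-time lemma of Section \ref{localsol}.

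Factoring out the $X$-norms via $\norm{v(\cdot,\tau)\mid\cM^p_q}\leq\tau^{-\beta}\norm{v\mid X}$ reduces the control of $t^\beta\norm{u_{n+2}(\cdot,t)-u_{n+1}(\cdot,t)\mid\cM^p_q}$ to $t^\beta$ times $\int_0^t(t-\tau)^{\al-1-N(\gam-1)\al/2p}\tau^{-\beta\gam}\,d\tau$. As in Lemma \ref{Hstep2}, the substitution $\tau=ts$ converts this into a finite Beta integral multiplied by $t^{\al-N(\gam-1)\al/2p-\beta\gam}$; the identity $\al-N(\gam-1)\al/2p+\beta(1-\gam)=0$ used there makes this exactly cancel the $t^\beta$ weight. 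Taking the supremum over $t>0$ yields
\begin{align*}
\norm{u_{n+2}-u_{n+1}\mid X}\leq C\norm{u_{n+1}-u_n\mid X}\bigl(\norm{u_{n+1}\mid X}^{\gam-1}+\norm{u_n\mid X}^{\gam-1}\bigr),
\end{align*}
and inserting the a priori bound \eqref{Hbound}, so that $\norm{u_{n+j}\mid X}^{\gam-1}\leq(2C_4\del)^{\gam-1}$, furnishes the claim.

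The only substantive check is the integrability of $\int_0^1(1-s)^{\al-1-N(\gam-1)\al/2p}s^{-\beta\gam}\,ds$, which requires $-\beta\gam>-1$ and $\al-N(\gam-1)\al/2p>0$. Both are guaranteed by the parameter conditions of Theorem \ref{globalsol} through Remark \ref{aboutbeta}, so no real obstacle arises; the proof is an essentially mechanical adaptation of the local-in-time contraction argument to the homogeneous, scale-invariant setting, exploiting the fact that the homogeneity degree of the time weight was chosen so that the resulting power of $t$ is exactly $-\beta$.
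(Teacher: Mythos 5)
Your proposal is correct and follows essentially the same route as the paper's proof: the same chain $\cM^{p}_{q}\subset\cN^{0}_{p,q,1}$, Sobolev embedding into $\cN^{N(\gam-1)/p}_{p/\gam,q/\gam,1}$, the smoothing estimate of Lemma \ref{HSalpha}, the mean-value-theorem bound on the nonlinear difference, the exact cancellation of the $t^{\beta}$ weight via $\al-N(\gam-1)\al/2p+\beta(1-\gam)=0$, and the insertion of the a priori bound \eqref{Hbound}. No gaps.
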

\begin{proof}
  By using Proposition \ref{Hinclusion}, Proposition \ref{HSobolev},
  Lemma \ref{HSalpha}, Remark \ref{aboutbeta} and Proposition \ref{rmkformorreynorm}.\ref{rmkformorreynorm-2}, we see that
  \begin{align*}
    &\norm{u_{n+2}(\cdot,t)-u_{n+1}(\cdot,t)\mid\cM^{p}_{q}}
    \\
    \leq C&
    \int_{0}^{t}(t-\tau)^{\al -1}\norm{S_{\al}(t-\tau)\qty(
    |u_{n+1}|^{\gam-1}u_{n+1}(\cdot,\tau)-|u_{n}|^{\gam-1}u_{n}(\cdot,\tau)
    )\mid\cN^{0}_{p,q,1}}
    \,d\tau
    \\
    \leq C&
    \int_{0}^{t}
    (t-\tau)^{\al -1-\frac{N(\gam -1)\al}{2p}}
    \norm{|u_{n+1}|^{\gam-1}u_{n+1}(\cdot,\tau)-|u_{n}|^{\gam-1}u_{n}(\cdot,\tau)\mid\cN^{0}_{p/\gam,q/\gam,\infty}}
    \,d\tau
    \\
    \leq C& \int_{0}^{t}(t-\tau)^{\al -1-\frac{N(\gam -1)\al}{2p}}
    \norm{|u_{n+1}|^{\gam-1}u_{n+1}(\cdot,\tau)-|u_{n}|^{\gam-1}u_{n}(\cdot,\tau)\mid\cM^{p/\gam}_{q/\gam}}
    \,d\tau
    \\
    \leq C& \int_{0}^{t}(t-\tau)^{\al -1-\frac{N(\gam -1)\al}{2p}}
    \norm{u_{n+1}(\cdot,\tau)-u_{n}(\cdot,\tau)\mid\cM^{p}_{q}}
    \\
    &\quad\quad\quad\quad\times
    \qty(
    \norm{u_{n+1}(\cdot,\tau)\mid\cM^{p}_{q}}^{\gam -1}
    +\norm{u_{n}(\cdot,\tau)\mid\cM^{p}_{q}}^{\gam -1}
    )
    \,d\tau
    \\
    \leq C&
    \int_{0}^{t}(t-\tau)^{\al -1-\frac{N(\gam -1)\al}{2p}}\tau^{-\beta\gam}
    \,d\tau
    \norm{u_{n+1}-u_{n}\mid X}\\
    &\quad\quad\quad\quad\times
    \qty(
    \norm{u_{n+1}\mid X}^{\gam -1}
    +\norm{u_{n}\mid X}^{\gam -1}
    ).
  \end{align*}
  Therefore, we have
  \begin{align*}
    \norm{u_{n+2}(\cdot,t)-u_{n+1}(\cdot,t)\mid\cM^{p}_{q}}t^{\beta}
    \leq C&
    \norm{u_{n+1}-u_{n}\mid X}
    t^{\al -\frac{N(\gam -1)\al}{2p}+\beta(1-\gam)}\\
    &\quad\quad\quad\quad\times
    \qty(
    \norm{u_{n+1}\mid X}^{\gam -1}
    +\norm{u_{n}\mid X}^{\gam -1}
    )\\
    =C&
    \norm{u_{n+1}-u_{n}\mid X}
    \qty(
    \norm{u_{n+1}\mid X}^{\gam -1}
    +\norm{u_{n}\mid X}^{\gam -1}
    )
  \end{align*}
  for all $t>0$. Then, we take a supremum over $t>0$ and we get
  \begin{align*}
    \norm{u_{n+2}-u_{n+1}\mid X}
    \leq C&
    \norm{u_{n+1}-u_{n}\mid X}
    \qty(
    \norm{u_{n+1}\mid X}^{\gam -1}
    +\norm{u_{n}\mid X}^{\gam -1}
    ).
  \end{align*}
  Combining a bound \eqref{Hbound} with this estimate, we obtain
  \begin{align*}
    \norm{u_{n+2}-u_{n+1}\mid X}
    \leq C \del^{\gam -1}
    \norm{u_{n+1}-u_{n}\mid X}.
  \end{align*}
\end{proof}
\begin{pfglobal}
  Take $\del$ so small that
  \begin{align*}
    \norm{u_{n+2}-u_{n+1}\mid X}\leq \frac{1}{2}
    \norm{u_{n+1}-u_{n}\mid X}
  \end{align*}
  for $n=0,1,...$.
  We see that $u_n$ converges in $X$. Set $u$ as a limit of $u_n$ in $X$. Clearly $u$ is a mild solution of problem \eqref{eq}.
  \qed
\end{pfglobal}
\begin{ack}
  We would like to thank Professor Tatsuki Kawakami for bringing time fractional heat equations to our attention.
  The first author was supported in part by JST SPRING, Grant Number JPMJSP2108. The second author was supported in part by JSPS KAKENHI Grant-in-Aid for Early-Career Scientists 23K13005.
\end{ack}


\begin{thebibliography}{99}
  \small
  \setlength{\itemsep}{-1pt}


    \bibitem{AdamsGelhar}
    E.~E. Adams and L.~W. Gelhar, {\it Field study of dispersion in a heterogeneous aquifer: 2. Spatial moments analysis}, Water Resources Research {\bf 28} (1992), 3293--3307.


    \bibitem{AlmeidaPrecioso}
    M.~F. Almeida and J.~C.~P. Precioso,
    {\it Existence and symmetries of solutions in Besov--Morrey spaces for a semilinear heat--wave type equation}, J. Math. Anal. Appl. {\bf 432} (2015), 338--355.


    \bibitem{BarasPierre85}
    P. Baras and M. Pierre,
    {\it Crit\`{e}re d'existence de solutions positives pour des \'{e}quations semi-lin\'{e}aires non monotones}, Ann. Inst. H. Poincar\'{e} Anal. Non Lin\'{e}aire. {\bf 2} (1985), 185--212.


    \bibitem{Bazhlekova}
    E.~G. Bazhlekova,
    {\it Subordination principle for fractional evolution equations}, Fract. Calc. Appl. Anal. {\bf 3} (2000), 213--230.


    \bibitem{BrezisCazenave}
    H. Brezis and T. Cazenave,
    {\it A nonlinear heat equation with singular initial data}, J. Anal. Math. {\bf 68} (1996), 277--304.


    \bibitem{C-N}
    P.~M. Carvalho-Neto,
    {\it Abstract differential equations and Caputo fractional derivative},
    Semigroup Forum {\bf 104} (2022), 561--583.


    \bibitem{CelikZhou}
    C. Celik and Z. Zhou,
    {\it No local $L^{1}$ solution for a nonlinear heat equation},
    Comm. Partial Differential Equations {\bf 28} (2003), 1807--1831.


    \bibitem{CNYY}
    J. Cheng, J. Nakagawa, M. Yamamoto and T. Yamazaki,
    {\it Uniqueness in an inverse problem for a one-dimensional fractional diffusion equation},
    Inverse Problems. {\bf 25} (2009), 115002.


    \bibitem{EK}
    S.~D. Eidelman and A.~N. Kochubei,
    {\it Cauchy problem for fractional diffusion equations},
    J. Differential Equations {\bf 199} (2004), 211--255.


    \bibitem{Fujita66}
    H. Fujita,
    {\it On the blowing up of solutions of the Cauchy problem for $u_{t}=\Del u + u^{\al+1}$}, J. Fac. Sci. Univ. Tokyo Sect. I {\bf 13} (1966), 109--124.


    \bibitem{GWBook}
    C.~G. Gal and M. Warma, {\it Fractional in Time Semilinear Parabolic Equations and Applications}, in: Math\'{e}matiques et Applications, vol. 84, Springer International Publishing, Switzerland, 2020.


    \bibitem{GMS}
    M. Ghergu, Y. Miyamoto and M. Suzuki, {\it Solvability for time-fractional semilinear parabolic equations with singular initial data}, Math. Meth. Appl. Sci, to appear.


    \bibitem{GigaMitakeSato}
    Y. Giga, H. Mitake and S. Sato, {\it On the equivalence of viscosity solutions and distributional solutions for the time-fractional diffusion equation}, J. Differ. Equ. {\bf 316} (2022), 364--386.


    \bibitem{GLM}
    R. Gorenflo, Y. Luchko and F. Mainardi, {\it Analytical properties and applications of the Wright function}, Fract. Calc. Appl. Anal. {\bf 2} (1999), 383--414.


    \bibitem{Hayakawa}
    K. Hayakawa, {\it On nonexistence of global solutions of some semilinear parabolic differential equations}, Proc. Japan Acad. {\bf 49} (1973), 503--505.


    \bibitem{HisaIshige2018}
    K. Hisa and K. Ishige, {\it Existence of solutions for a fractional semilinear parabolic equation with singular initial data}, Nonlinear Anal. {\bf 175} (2018), 108--132


    \bibitem{KLT}
    M. Kirane, Y. Laskri and N.E. Tatar, {\it Critical exponents of Fujita type for certain evolution equations and systems with spatio-temporal fractional derivatives}, J. Math. Anal. Appl. {\bf 312} (2005), 488--501.


    \bibitem{Kojima}
    M. Kojima, {\it On solvability of a time-fractional doubly critical semilinear equation, and its quantitative approach to the non-existence result on the classical counterpart}, arXiv:2301.13409.


    \bibitem{KozonoYamazaki}
    H. Kozono and M. Yamazaki, {\it Semilinear heat equations and the Navier-Stokes equation with distributions in new function spaces as initial data}, Comm. Partial Differential Equations, {\bf 19} (1994), 959--1014.


    \bibitem{Miyamoto2021}
    Y. Miyamoto, {\it A doubly critical semilinear heat equation in the $L^{1}$ space}, J. Evol. Equ. {\bf 21} (2021), 151--166.


    \bibitem{Nabti}
    A. Nabti,
    {\it Life span of blowing-up solutions to the Cauchy problem for a time-space fractional diffusion equation}, Comput. Math. Appl. {\bf 78} (2019) 1302--1316.


    \bibitem{SakamotoYamamoto}
    K. Sakamoto and M. Yamamoto, {\it Initial value/boundary value problems for fractional diffusion-wave equations and applications to some inverse problems}, J. Math. Anal. Appl. {\bf 382} (2011), 426--447.


    \bibitem{Sugitani75}
    S. Sugitani, {\it On nonexistence of global solutions for some nonlinear integral equations},
    Osaka J. Math. {\bf 12} (1975), 45--51.


    \bibitem{Suzuki2022}
    M. Suzuki, {\it Local existence and nonexistence for fractional in time reaction–diffusion equations and systems with rapidly growing nonlinear terms}, Nonlinear Anal. {\bf 222} (2022), 112909.


    \bibitem{TW2014}
    S. Tayachi and F.~B.~Weissler, {\it The nonlinear heat equation involving highly singular initial values and new blowup and life span results}, J. Elliptic Parabol. Equ. {\bf 4} (2018), 141--176.


    \bibitem{Umakoshi}
    H. Umakoshi, {\it A semilinear heat equation with initial data in negative Sobolev spaces}, Discrete Contin. Dyn. Syst. {\bf 14} (2021), 745--767.


    \bibitem{WCX}
    R.~N. Wang, D.~H. Chen and T.~J. Xiao, {\it Abstract fractional Cauchy problems with almost sectorial operators}, J. Differ. Equ. {\bf 252} (2012), 202--235.


    \bibitem{Weissler80}
    F.~B.~Weissler,
    {\it Local existence and nonexistence for semilinear parabolic equations in $L^p$}, Indiana Univ. Math. J. {\bf 29} (1980), 79--102.


    \bibitem{Weissler81}
    F.~B.~Weissler,
    {\it Existence and nonexistence of global solutions for a semilinear heat equation}, Israel J. Math. {\bf 38} (1981), 29--40.




    \bibitem{ZhangSun}
    Q.~G. Zhang and H.~R. Sun, {\it The blow-up and global existence of solutions of Cauchy problems for a time fractional diffusion equation}, Topol. Methods Nonlinear Anal. {\bf 46} (2015), 69--92.

    \bibitem{Zhanpeisov}
    E. Zhanpeisov, {\it Existence of solutions to fractional semilinear parabolic equations in Besov--Morrey spaces}, arXiv:2301.04263.

\end{thebibliography}
\end{document}